\definecolor{mygreen}{RGB}{46,139,87}
\newtheorem{thm}{Theorem}[section]
\newtheorem{prop}[thm]{Proposition}
\newtheorem{lem}[thm]{Lemma}
\newtheorem{claim}[thm]{Claim}
\newtheorem{cor}[thm]{Corollary}
\newtheorem{defn}[thm]{Definition}
\newtheorem*{ack*}{Acknowledgment}
\theoremstyle{remark}
\newtheorem{rem}[thm]{Remark}
\newtheorem*{ex*}{Example}
\def\ie{\emph{i.e.}\xspace}
\def\eg{\emph{e.g.}\xspace}
\newcommand{\NN}{\mathbb{N}\xspace}
\newcommand{\ZZ}{\mathbb{Z}\xspace}
\let\@@mathopen=\[
\let\@@mathclose=\]
\def\[{\@ifnextchar[{\@@doubleouvrant}{\@@mathopen}}
\def\@@doubleouvrant#1{[\![}
\def\@@doublefermant#1{]\!]}
\def\]{\@ifnextchar]{\@@doublefermant}{\@@mathclose}}
\DeclareMathOperator{\out}{out}
\DeclareMathOperator{\indeg}{in}
\newcommand{\QQ}[1]{\ensuremath{\mathcal{Q}_{#1}}\xspace}
\newcommand{\RR}[1]{\ensuremath{\mathcal{R}_{#1}}\xspace}
\newcommand{\TTe}[1]{\ensuremath{\mathcal{T}_{#1}}\xspace}
\newcommand{\CC}[1]{\ensuremath{\mathcal{C}_{#1}}\xspace}
\newcommand{\BB}[2][]{\ensuremath{\mathcal{B}_{#2}^{\ifthenelse{\equal{#1}{}}{}{(#1)}}}\xspace}
\newcommand{\TCP}{tree-\-and-\-closure partition\xspace}
\newcommand{\td}[1]{\ensuremath{t^{(d)}_{#1}}\xspace}
\newcommand\TT[1]{\ensuremath{\mathcal{T}_{#1}}\xspace}
\newcommand\MM[1]{\ensuremath{M{(#1)}}\xspace}
\def\Mr#1{\ensuremath{M_{\!/\!#1}}\xspace}
\def\Tdn#1{\mathsf{T}^{(#1)}\xspace}
\def\tdi#1{\ensuremath{{T}_{#1}}\xspace}
\def\3#1{\mathcal{T}_{#1}\xspace}
\def\Mdd{\ensuremath{\mathsf{M_d}}\xspace}
\def\Mdp{\ensuremath{\mathsf{M_{d,p}}}\xspace}
\def \Fdp{\mathsf{\tilde{F}^p_d}\xspace}
\def\hdp#1#2{\ensuremath{h_{#1}^{(#2)}}\xspace}
\def\SS#1#2{\ensuremath{\mathsf{S^{(d)}_i}}\xspace}
\def\ddori{$\frac{d}{d-2}$-orientation\xspace}
\newcommand{\eref}[1]{(\ref{#1})\xspace}
\newcommand{\Tbip}{\ensuremath{\mathcal{T}}\xspace}
\newcommand{\Pbip}{\ensuremath{\mathcal{P}}\xspace}
\newcommand{\be}{\bar e\xspace}
\newcommand{\bb}{\bar b\xspace}
\newcommand{\w}[1]{\ensuremath{w^{\scriptscriptstyle{(#1)}}}\xspace}
\newcommand{\tw}[1]{\ensuremath{\tilde{w}^{\scriptscriptstyle{(#1)}}}\xspace}
\newcommand{\p}[1]{\ensuremath{p^{\scriptscriptstyle{(#1)}}}\xspace}
\newcommand{\tp}[1]{\ensuremath{\tilde{p}^{\scriptscriptstyle{(#1)}}}\xspace}
\setlist[itemize]{noitemsep, topsep=0pt}
\newcommand{\sss}{\scriptscriptstyle}
\title{Generic method for bijections \\between blossoming trees and planar maps}
\author{Marie Albenque\footnote{albenque@lix.polytechnique.fr} \qquad
Dominique Poulalhon\footnote{poulalhon@liafa.univ-paris-diderot.fr}\\
\small LIX -- CNRS, \'Ecole Polytechnique, France\\
\small LIAFA -- CNRS, Universit\'e Paris Diderot, France}
\date{}
\begin{document}
\maketitle

 \begin{abstract}
  This article presents a unified bijective scheme between planar maps
  and blossoming trees, where a blossoming tree is defined as a
  spanning tree of the map decorated with some dangling
  half-edges that enable to reconstruct its faces. Our method
  generalizes a previous construction of Bernardi by loosening its
  conditions of application so as to include \emph{annular maps},
  that is maps embedded in the plane with a root face different from
  the outer face.

  The bijective construction presented here relies deeply on the
  theory of $\alpha$-orien\-ta\-tions introduced by Felsner, and in
  particular on the existence of minimal and accessible orientations.
  Since most of the families of maps can be characterized by such
  orientations, our generic bijective method is proved to capture as
  special cases many previously known bijections involving blossoming
  trees: for example Eulerian maps, $m$-Eulerian maps, non-separable
  maps and simple triangulations and quadrangulations of a $k$-gon.
  Moreover, it also permits to obtain new bijective constructions for
  bipolar orientations and $d$-angulations of girth $d$ of a $k$-gon.

  As for applications, each specialization of the construction
  translates into enumerative by-products, either via a closed formula
  or via a recursive computational scheme. Besides, for every family
  of maps described in the paper, the construction can be implemented
  in linear time. It yields thus an effective way to encode or
  sample planar maps.

  In a recent work, Bernardi and Fusy introduced another unified
  bijective scheme; we adopt here a different strategy 
  which allows us to capture different bijections. These two
  approaches should be seen as two complementary ways of unifying
  bijections between planar maps and decorated trees. \end{abstract}

\section*{Introduction}
The enumeration of planar maps was initiated in the 60's with the
pioneering work of Tutte \cite{Tut63}. To obtain enumeration formulas
for planar maps, Tutte uses some properties about their decomposition
to write equations satisfied by their generating series. The equations
thus obtained are quite complicated, in particular some additional
parameters (known as \emph{catalytic variables}) have usually to be
introduced to write them. The work of Tutte is a computational
\emph{tour de force}, since he managed, in most cases, to solve these
equations and to obtain closed (and particularly simple) formulas for
numerous families of maps.

This method turned out to be extremely versatile and can be applied to
many different models with only slight modifications. Furthermore, the
structure of the equations and of their solutions is now better
understood and some standard methods such as the ``quadratic method''
\cite[sec.2.9]{GouJac} and its extensions \cite{BouJeh06} are
available to compute them in standard cases. Tutte's approach, however,
gives little insight about the combinatorial structure of maps and in
particular produces no explanation of the reason why the formulas
obtained should be so simple.

Since then, successful ideas have been used to rederive and generalize
the results of Tutte, including work based on matrix integrals
\cite{Hooft74,BIPZ78}, algebraic approach \cite{Jackson87} and
bijective constructions \cite{CorVau81,Sch98}. The latter yield an
elegant way to rederive the formulas of Tutte but they also provide
tools for a better understanding of the combinatorial structure of maps: they
produce for example an efficient way to encode them by words of algebraic languages and hence to sample them efficiently \cite{Sch98,PouSch06}. This
led to establish new conjectures about the asymptotic behaviour of
random maps, which gave birth to a new field of research that has been
extremely active since (see for
instance~\cite{ChaSch04,MarMok06,LeGall07,Miermont13,LeGall13}). It is
noteworthy that a key ingredient in all these works about the
convergence of random planar maps is Schaeffer's bijection
\cite{Sch98} between quadrangulations and well-labelled trees, where
the distance in the quadrangulation between a vertex and the root is
encoded by the label of the corresponding vertex in the tree, or one
of its generalizations \cite{BDG04,ChaMarSch09,Miermont09}.

\bigbreak

Let us now focus on those bijective proofs. Formulas obtained by Tutte and its successors suggest that maps could be interpreted as trees with some decorations. After initial work in that direction by Cori and Vauquelin \cite{CorVau81}, Schaeffer \cite{Sch98} drew new attention on this field by obtaining numerous bijective constructions.
This founding work was followed by a series of papers dealing with
various families of maps: a non-exhaustive list includes maps with
prescribed degree sequence \cite{Schaeffer97, BDG02,BDG04}, maps
endowed with a physical model \cite{MBMSch00,BDG07} or with
connectivity constraints \cite{PouSch06}. Each of these bijections
appears as an ad-hoc explanation of the known enumeration formula, but
they present strong similarities, which calls for a unified bijective
theory. An important step in that direction has been achieved
in~\cite{BerFusPentagulation,BerFus12}, where a ``master bijection''
is introduced in order to see many constructions as special cases of a
common construction. The main purpose of the present paper is to
present a different attempt in unifying the bijective constructions, in
particular so as to include some bijections that are not captured by
the work of Bernardi and Fusy.

Allow us to be slightly more precise. The first bijections obtained
rely on the existence of a canonical spanning tree of the map
\cite{Schaeffer97, Sch98, BDG02} or of its quadrangulation
\cite{CorVau81, Sch98, BDG04}.  As emphasized by
Bernardi~\cite{Bernardi07}, a map endowed with a spanning tree can
also be viewed as a map endowed with an orientation of its edges with
specific properties. The latter point of view appears to be more
suitable to unify and generalize the constructions.  In particular,
the master bijection defined in~\cite{BerFusPentagulation} and
\cite{BerFus12} is based on a generalized notion of orientations. This
construction includes as special cases many previously known
bijections, but unfortunately not all of them and in particular not
the case of simple triangulations \cite{PouSch06} and quadrangulations
\cite[ch.3]{FusyThesis}. We would like to emphasize that in both these
cases, the tree obtained is simply a spanning tree of the original
map. The master bijection, in contrast, produces a spanning tree of
the superimposition of a planar map, its dual and their (common)
quadrangulation.


\bigbreak

The ground result of our paper is to present a new bijective scheme
which relies on an orientation of the map and yields a spanning tree
of the map with some decorations that allow to reconstruct facial
cycles. It generalizes the result of~\cite{Bernardi07} by loosening
the rooting conditions. In particular it enables to deal with
\emph{annular maps} (that is, rooted planar maps with a marked face)
such as triangulations of a $p$-gon \cite{PouSch06}. Notably all the
previous bijective constructions that involve a spanning tree of the
map are captured by our generic scheme and, moreover, we obtain new
bijections for plane bipolar orientations and $d$-angulations of a
$p$-gon with girth $d$. Besides, the first bijective proof of a
well-known theorem by Hurwitz on products of transpositions in the
symmetric group has been obtained recently by Duchi, Schaeffer and the
second author~\cite{DucPouSch13} using this generic scheme.

Bijective proofs appear often as an a posteriori enlightening
explanation of a simple enumerative formula. In fact, the formula is
used as a guide to construct the ``simplest'' objects that it
enumerates: the right objects to consider can be seen as its
combinatorial translation. Here, remarkably, the satisfying
orientations are often natural enough so that they can be guessed even if a
formula is not available. 

Another important feature of our generic scheme is its constructive
character: given a blossoming tree, the corresponding map can be
computed in linear time. Reciprocally given a map endowed with the
appropriate orientation, the corresponding blossoming tree can be
computed in quadratic time by a generic algorithm. In fact, for all
the families of maps considered in this paper, ad-hoc algorithms can
be designed to compute the blossoming tree in linear time. This was
known to be true for rooted maps~\cite{Bernardi07}, and one of our
main contributions is to design a linear-time algorithm that computes
the blossoming tree of a $d$-angulation of a $p$-gon.

\bigbreak

To conclude, let us mention three perspectives to continue this work.
The bijective method we develop relies deeply on orientations and,
algorithmically speaking, takes as input a map endowed with a specific
orientation. An algorithm by Felsner~\cite{Felsner04} (see also
\cite[p.56]{FusyThesis}) ensures that, for a fixed map with $n$ vertices and a
prescribed sequence $\alpha$ of outdegrees, an $\alpha$-orientation
(if any exists) can be computed by a generic algorithm of complexity
$\Uptheta(n^{3/2})$, if the map has bounded maximal
outdegree.  For various families of maps, linear time algorithms do
exist, but it is still an open problem to design such an algorithm for
$d$-angulations when $d\geq 5$ (unlike simple triangulations and
quadrangulations, see for instance \cite[ch.2]{FusyThesis}).

Secondly, almost all models of maps appear now as special cases either
of our generic scheme or of the master bijections of Bernardi and
Fusy.  Nevertheless, a few models are still not captured; this in
particular the case of models with ``matter'' such as the Ising model,
for which some bijections with blossoming trees are known
\cite{MBMSch00}. Some additional work is needed to either generalize
one of those schemes or to come up with an alternative approach.

Lastly, as mentioned above, the scaling limit of random plane maps has
been a very active area of research in the last years. So far, it has
been proved that for $p=3$ or $p$ even, the limit of (properly
rescaled) $p$-angulations is the so-called ``Brownian map''
\cite{Miermont13,LeGall13}. It is widely believed that all the
reasonable families of maps -- which includes for instance
$p$-angulations for $p$ odd or maps with constraints on their
connectivity -- belong to the same universality class, or in other
words, should converge to the same limit object.  A first result in
this direction about simple triangulations and quadrangulations has
been obtained very recently by Addario-Berry and the first
author~\cite{AdAl13}. The proof of their result relies on the
bijections of~\cite{PouSch06} and~\cite{FusyThesis} and on a way to
interpret the distances in the map on the
corresponding blossoming tree. It would be a major breakthrough to
generalize their result to all the maps captured by our scheme.

\paragraph{Outline} In Section~\ref{sec:MapOri}, we gather definitions
about maps and their orientations and recall the fundamental result of
Felsner about uniqueness of minimal $\alpha$-orientations
(Theorem~\ref{th:Felsner}). We introduce \emph{blossoming maps} in
Section~\ref{sec:main} so as to describe and prove our bijective scheme
along with some remarks about its complexity.

In Section~\ref{sec:PrevBij}, previous bijections obtained for
Eulerian maps and general maps (Subsection~\ref{sub:degseq}),
$m$-Eulerian maps (Subsection~\ref{sub:mEul}) and non-separable maps
(Subsection~\ref{sub:nonsep}) are rederived via our bijective
technique. A new bijection between bipolar orientations and some
triples of paths is obtained in Section~\ref{sec:bipolar}.

Sections~\ref{sec:dangulations} and~\ref{sec:opening} are devoted to
$d$-angulations of a $p$-gon. More precisely,
Section~\ref{sec:dangulations} describes {\ddori}s and the bijection
between $p$-gonal $d$-fractional forests and $p$-gonal $d$-angulations
as well as enumerative consequences, while Section~\ref{sec:opening}
focuses on the description and the proof of the linear time opening
algorithm in that setting.

\paragraph{Acknowledgments.} 
We would like to thank Éric Fusy and Gilles Schaeffer for fruitful
discussions and a referee whose careful reading helped us to improve significantly the exposition of our work. 
We acknowledge the
support of the ERC under the agreement ``ERC StG 208471 - ExploreMap'' and of the 
ANR under the agreement ``ANR 12-JS02-001-01 - Cartaplus''. 

\section{Maps and orientations}\label{sec:MapOri}

\subsection{Planar maps}\label{sec:maps}

A \emph{planar map} is a proper embedding of a connected graph in the
sphere, where \emph{proper} means that edges are smooth simple arcs
which meet only at their endpoints. Two planar maps are identified if
they can be mapped one onto the other by a homeomorphism that
preserves the orientation of the sphere. \emph{Edges} and
\emph{vertices} of a map are the natural counterparts of edges and
vertices of the underlying graph. The \emph{faces} of a map are the
connected components of the complementary of the embedded graph. The
embedding fixes the cyclical order of edges around each vertex, which
defines readily a \emph{corner} as a couple of consecutive edges
around a vertex (or, equivalently, around a face).  Corners may also
be viewed as incidences between vertices and faces. The \emph{degree}
of a vertex or a face is defined as the number of its corners. In
other words, it counts incident edges with multiplicity 2 for each
loop (in the case of vertex degree) or for each bridge (in the case of
face degree).

In these definitions, vertices and faces play a similar role. It is
often useful to exchange them and to consider the \emph{dual}
$M^\star$ of a given map $M$, whose vertices correspond to faces of
$M$ and faces to vertices of $M$. Edges are somehow unchanged: each
edge $e$ of $M$ corresponds to an edge of $M^\star$ that is incident to
the same vertices and faces as~$e$, see Fig.\ref{fig:dual}.

\begin{figure}[t]
  \centering
  \subfigure[\label{fig:dual}Dualization,]{\includegraphics[page=4, scale=0.64]{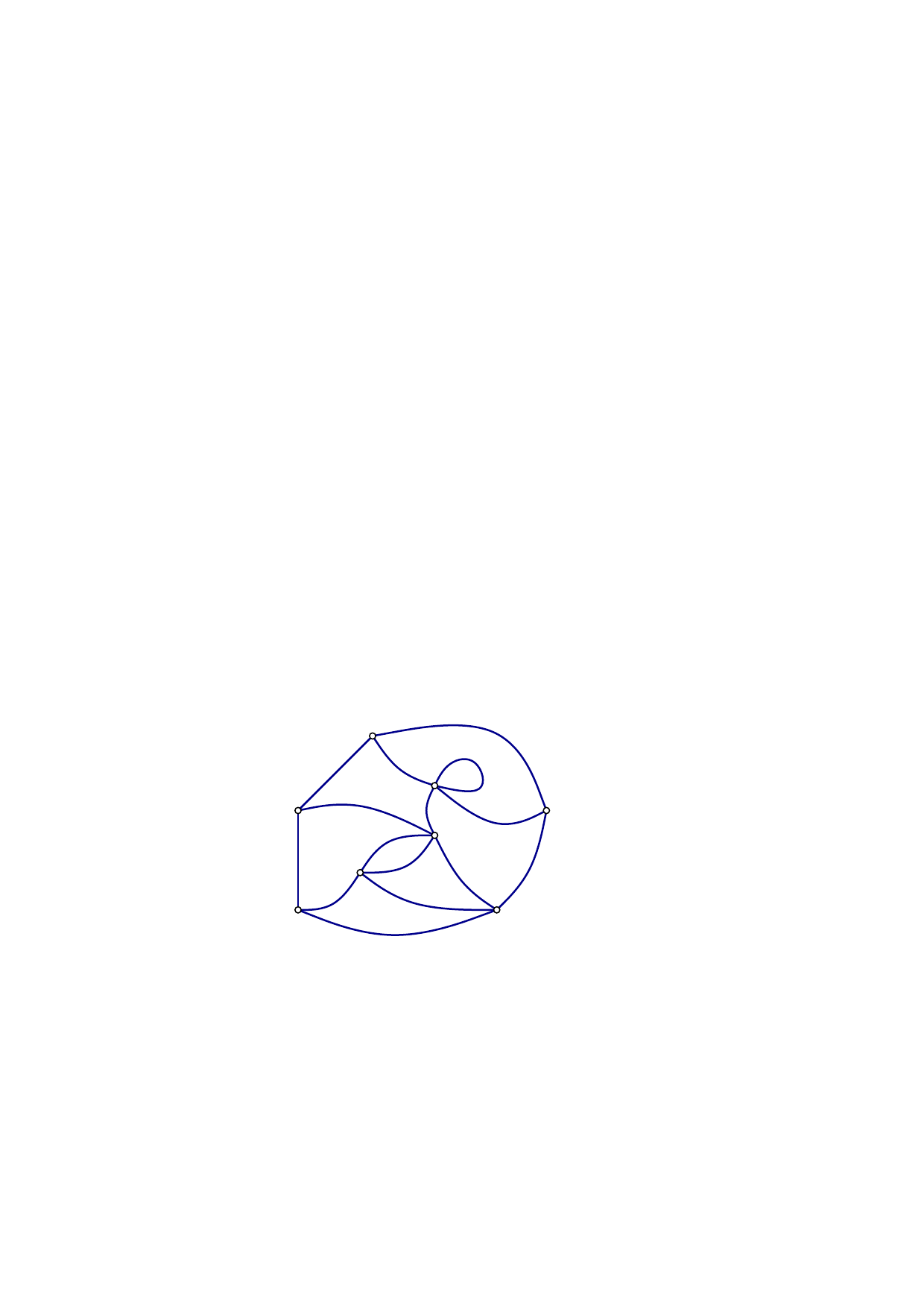}}\quad
  \subfigure[\label{fig:quad} quadrangulation,]{\;\includegraphics[page=3, scale =0.64]{Dual_Quad}\;}\quad
  \subfigure[\label{fig:radial}and radialization.]{\;\includegraphics[page=2, scale =0.64]{Dual_Quad}}
  \caption{Example of classical constructions on a map. The original map is drawn in dashed grey lines with white vertices. Its dual map, quadrangulation and radial map are drawn in plain colored lines.}
  \label{fig:Dual_Quad}
\end{figure}

A planar map is said to be \emph{$d$-regular} if all its vertices have
degree~$d$.  Dually, a planar map is called a \emph{$d$-angulation} if
all its faces have degree~$d$; the terms \emph{triangulation},
\emph{quadrangulation} and \emph{pentagulation} correspond
respectively to the cases where $d=3,4,5$.  The following classical
and useful construction associates a quadrangulation to each planar
map~$M$. Let us say that vertices of $M$ are white; add a black vertex
in each face of $M$, and an edge in each corner of $M$ between the
corresponding white and black vertices. This produces a triangulation
with bicolored vertices. Keeping only the additional edges leads to a
quadrangulation $\QQ M$ which is called \emph{the} quadrangulation
of~$M$, see Fig.\ref{fig:quad}. Its dual map $\RR M$ is called the
\emph{radial map} of~$M$, see Fig.\ref{fig:radial}.

A \emph{plane map} is a proper embedding of a connected graph in the
plane; its unique unbounded face is called the \emph{outer face}, and all
the other faces are called \emph{inner} faces. Vertices and edges are
called \emph{outer} or \emph{inner} depending on whether they are
incident to the outer face or not. Observe that a plane map is in fact
a planar map with a distinguished marked face.
 
A planar or plane map is said to be \emph{(corner)-rooted} if one of
its corners is distinguished. The corresponding vertex and face are
called \emph{root vertex} and \emph{root face}. In the figures, the
root corner is indicated by a double arrow and the root vertex by a
square vertex, see Fig.\ref{fig:2-fractional} for instance. The
\emph{root edge} is defined as the edge that follows the root corner
in clockwise order around the root vertex. The usual convention is to
associate to each rooted planar map the rooted plane map in which the
root and the outer faces coincide. However in this work, plane maps
are allowed to have one root face different from the outer face (in
the literature, planar maps for which the root face is different from
the outer face are sometimes called \emph{annular maps}). Some weaker
rootings will sometimes be also considered by only pointing either a
root vertex, a root edge or a root face. In the latter case, vertices
or edges incident to the root face are called \emph{root vertices} or
\emph{root edges}.

A \emph{plane tree} is a planar map with a single face; its vertices
are called \emph{nodes} and \emph{leaves} depending on their degree
being greater than or equal to one.  A \emph{planted tree} is a plane
tree rooted at a leaf. Observe that usual ``planar trees'' or
``ordered trees'' are obtained from planted trees by deleting their
root leaf.

\subsection{Orientations}\label{sec:orientation}
This section gathers definitions and fundamental results about
orientations of planar maps. The terminology and convention are not
completely standard and we emphasize the differences when needed.

An \emph{orientation} of a map is the choice of a direction for each
of its edges.  The \emph{indegree} or \emph{outdegree} of a vertex
$v$, denoted $\indeg(v)$ or $\out(v)$, is the number of edges oriented
inwards or outwards~$v$.  Let $M$ be a planar map, $V$ the set of its
vertices, and let $\alpha: V \rightarrow \NN$ be an application which
associates a natural number to each vertex of the map. An
\emph{$\alpha$-orientation} -- as introduced by Felsner
in~\cite{Felsner04} -- is an orientation of $M$ such that for each
vertex $v$ in $V$, $\out(v)=\alpha(v)$. If such an orientation exists,
$\alpha$ is said to be \emph{feasible}. An orientation of a
corner-, vertex- or face-rooted map is said to be \emph{accessible} if for
any vertex $v$, there exists an oriented path (see below) from $v$ to the root
vertex (or to one of the root vertices, in the case of a face
rooting).

\medskip

An \emph{oriented path} is an alternating sequence
$(v_0,e_1,v_1,\dots,v_{\ell-1},e_\ell,v_\ell)$ of incident vertices
and edges in which each edge $e_i$ is oriented from $v_{i-1}$ to
$v_i$.  An \emph{oriented cycle} (also called a \emph{circuit}) is
defined accordingly.  The (canonical) embedding of plane maps enables
to define \emph{clockwise} and \emph{counterclockwise cycles} as
simple oriented cycles with the outer face respectively on their left
and on their right. Observe that the orientation obtained after
reverting all the edges of a given oriented cycle is still an
$\alpha$-orientation. In fact, all the $\alpha$-orientations of a map
$M$ can be obtained by a sequence of such \emph{flips},
see~\cite{Felsner04}. In particular, it implies that either all or
none $\alpha$-orientations of $M$ are accessible. In the former case,
$\alpha$ is said \emph{accessibly feasible}. Moreover:
%
%
%

\begin{thm}[Felsner~\cite{Felsner04}]\label{th:Felsner}
Let $M$ be a plane map and $\alpha$ be a feasible function on its vertices. Then, there exists a unique $\alpha$-orientation without counterclockwise cycles. 
\end{thm}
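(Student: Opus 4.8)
The plan is to establish existence and uniqueness separately, using the two facts recalled just above the statement: that reversing an oriented cycle (a \emph{flip}) preserves the outdegree sequence, hence maps $\alpha$-orientations to $\alpha$-orientations, and that the set of $\alpha$-orientations is finite and nonempty (since $\alpha$ is feasible).

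For uniqueness, suppose $O_1$ and $O_2$ are two $\alpha$-orientations, both free of counterclockwise cycles, and let $D$ be the set of edges they orient differently. Orienting every edge of $D$ according to $O_1$, I first observe that the resulting digraph is balanced at each vertex: since $\out(v)=\alpha(v)$ in both orientations and the common edges contribute equally, the number of $D$-edges leaving $v$ equals the number entering $v$. Hence, if $D$ is nonempty, the $D$-digraph contains a simple directed cycle $C$ (follow outgoing $D$-edges until a vertex repeats). As a simple closed curve, $C$ has the outer face strictly on one side, so it is either clockwise or counterclockwise in $O_1$. If counterclockwise, this contradicts the hypothesis on $O_1$; if clockwise, then its edges, reoriented as $O_2$ prescribes, form a simple counterclockwise cycle of $O_2$, contradicting the hypothesis on $O_2$. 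Therefore $D=\emptyset$ and $O_1=O_2$.

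For existence, fix one $\alpha$-orientation $O_0$ as a reference. For any $\alpha$-orientation $O$, the formal difference $O-O_0$ is an integer flow, balanced at every vertex by the same outdegree argument, i.e. a circulation; since $M$ is a plane map, this circulation is the coboundary of a face potential $\pi_O$, uniquely determined once normalized by $\pi_O(f_\infty)=0$ on the outer face. I then set $\Phi(O)=\sum_f \pi_O(f)$. The key computation is that reversing a counterclockwise cycle $C$ decreases $\pi_O$ by $1$ on exactly the faces enclosed by $C$ and leaves it unchanged elsewhere, so $\Phi$ strictly decreases under such a flip. Choosing $O^\star$ to minimize $\Phi$ over the finite nonempty set of $\alpha$-orientations, any counterclockwise cycle of $O^\star$ could be flipped to produce a strictly smaller potential, contradicting minimality; hence $O^\star$ has no counterclockwise cycle.

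The main obstacle is the existence half, specifically the construction of the potential $\pi_O$ and the verification that a counterclockwise flip changes it monotonically: this is exactly where planarity is indispensable, since a circulation need not be a coboundary on a higher-genus surface, and the sign conventions relating ``clockwise/counterclockwise'' to the enclosed faces must be pinned down with care. Uniqueness, by contrast, is a purely local degree count combined with the Jordan-curve dichotomy for simple cycles, and should require no further machinery.
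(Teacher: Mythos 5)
The paper does not prove this statement: it is quoted from Felsner~\cite{Felsner04} with only a remark indicating that the set of $\alpha$-orientations carries a lattice structure whose minimum is the minimal orientation. Your argument is correct and is essentially Felsner's own --- uniqueness via the observation that the disagreement set is balanced at every vertex and hence contains a simple directed cycle that must be counterclockwise in one of the two orientations, and existence via the face potential of the circulation $O-O_0$, which is exactly the potential underlying the lattice structure alluded to in the paper's remark --- so there is nothing to object to beyond the sign conventions you already flag as needing care.
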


The relevance of this theorem for our purpose is to associate \emph{canonically} to any given feasible $(M,\alpha)$ one specific $\alpha$-orientation. From now on, we call \emph{minimal} any orientation without counterclockwise cycles.

\begin{rem}
Let us mention that the result of Felsner is in fact much stronger. He proves indeed that the set of $\alpha$-orientations of $M$ can be endowed with a lattice structure, where the cover relation corresponds essentially to the flip of an oriented cycle. The (unique) minimum element of this lattice is the minimal $\alpha$-orientation, hence its name.
\end{rem}

\begin{figure}
  \centering
  \subfigure[A 2-fractional oriented map (with counterclockwise
  cycles),]{\quad\includegraphics[page=1, scale =1.2]{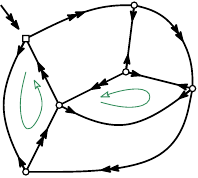}\quad}\qquad
  \subfigure[its representation as an oriented 2-expanded map,]{\;\includegraphics[page=2, scale =1.2]{orientation_fractionnaire}\;}\qquad
  \subfigure[and the minimal orientation with same in- and out-degrees.]{\quad\;\includegraphics[page=3, scale =1.2]{orientation_fractionnaire}\quad\;}
  \caption{A rooted plane map endowed with accessible 2-fractional orientations.}
  \label{fig:2-fractional}
\end{figure}

\medskip The $k$-expanded version of a plane map $M$ is defined as the
plane map where each edge of $M$ has been replaced by $k$ copies.  A
\emph{$k$-fractional orientation} of $M$ is defined
in~\cite{BerFusPentagulation} as an orientation of the $k$-expanded
map of $M$, with the additional property that two copies of the same
edge cannot create a counterclockwise cycle. It is conveniently
considered as an orientation of $M$ in which edges can be partially
oriented in both directions and the in- or out-degree of a vertex $v$
(that can now be fractional) is defined as the in- or out-degree of $v$ in
the $k$-expanded map, divided by~$k$.  In this setting, a
\emph{saturated edge} is an edge which is totally oriented in the same
direction, an \emph{oriented path} is a path in which each edge is at
least partially oriented in the considered direction. The notions of
clockwise or counterclockwise cycles, of minimality and of accessibility
follow. Figure~\ref{fig:2-fractional} shows the two possible
representations of a given 2-fractional orientation, with two counterclockwise cycles, and the corresponding minimal
2-fractional orientation.

\section{A generic bijective scheme for maps endowed with a minimal orientation}\label{sec:main}

\subsection{Blossoming maps and closure}\label{sec:blossom}

\begin{defn}\label{def:contourword}
  A \emph{blossoming map} is a plane map, in which each outer corner
  can carry a sequence of opening or closing \emph{stems}
  (in the literature, opening and closing stems are sometimes referred
  to as buds and leaves).
  
  The \emph{cyclic contour word} of a blossoming map is the word on
  $\{e,b,\bb\}$, which encodes the cyclic clockwise order of edges and
  stems along the border of the outer face with $e$ coding for an edge
  and $b$ and $\bb$ for opening and closing stems, see Fig.\ref{fig:cycliccontour}.
\end{defn}

\begin{figure}
  \centering
    \subfigure[\label{fig:cycliccontour}A blossoming map, with~cyclic contour word
    $\underline{bee\bar{b}}ee\bar{b}beebee\bar{b}\bar{b}ee\bar{b}$,]{%
      \;\includegraphics[page=4, scale =1.2]{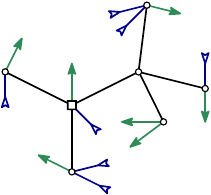}\;}\qquad
  \subfigure[\label{fig:localclosure}a local closure, which yields a new cyclic contour word
    $eee\bar{b}\underline{bee\underline{bee\bar{b}}\bar{b}}ee\bar{b}$,]{\;\includegraphics[page=5, scale=1.2]{blossoming_to_minimal}\;}\qquad
  \subfigure[\label{fig:totalclosure}and the total closure, with cyclic contour word $eee\bar{b}eee\bar{b}$.]{\;\includegraphics[page=6, scale=1.2]{blossoming_to_minimal}\;}
  \caption{Closure of a blossoming map. Opening stems are represented
    by plain green arrows, and closing stems by reverse blue
    arrows. Factors to be substituted by $e$ are underlined in the
    contour word.}
  \label{fig:closure}
\end{figure}

A \emph{local closure} of a blossoming map is a substitution of a
factor $be^{\star}\bb$ by the letter $e$ in its contour word, where
$e^\star$ denotes any sequence of $e$ (possibly empty), see
Fig.\ref{fig:localclosure}. In terms of maps, it corresponds to the
creation of a new edge (and hence a new face) by merging an opening
stem with the following closing stem (provided that there is no other
stem in between) in clockwise order around the border of the outer
face. The new edge is canonically oriented from the opening vertex to
the closing vertex, with the new bounded face on its right. If several
local closure operations are possible on a blossoming map, performing
all of them in either order yields the same result, hence iterating
such local closures produces eventually a unique object:

\begin{defn}\label{def:closure}
  The \emph{closure} $\overline M$ of a blossoming map $M$ is the
  (possibly blossoming) map obtained after iterating as many local
  closure operations as possible, see Fig.\ref{fig:totalclosure}. When
  only a subset of local closures is performed, the map obtained is
  called a \emph{partial closure} of~$M$.  The edges created during
  local closures operations are called \emph{closure edges}.
\end{defn}

In particular, the closure of a blossoming map with an equal number of
opening and closing stems is a (non-blossoming) map.  Since
closure edges are canonically oriented, if a blossoming map is endowed
with an orientation (possibly $k$-fractional), so is its
closure. Moreover, considering opening and closing stems respectively
as outgoing and incoming (half-)edges, in- and out-degrees are
preserved.  Since all the closures are performed in clockwise
direction around the map, no counterclockwise cycle can be created
during a local closure operation. Consequently if the initial
orientation is minimal, then so is its closure. Accessibility is
preserved as well.

The most interesting special case is the one of a rooted plane tree,
endowed with an accessible orientation, which, in the classical
non-fractional setting, implies that edges are oriented towards the
root vertex.  Examples of a blossoming tree and of its closure are
given in Figs.~\ref{fig:blossom} and~\ref{fig:closureblossom}. See
also Fig.~\ref{fig:non-Eulerian-blossom}
and~\ref{fig:non-Eulerian-closure} for a 2-fractional example.

The aim of the next section is to provide an inverse construction of the
closure.

\subsection{Orientations and opening}\label{sec:edge-partition}
The following theorem generalizes a result on tree orientations that can
be explicitly found \eg in~\cite{Bernardi07}, and which is at
the heart of all bijections between map orientations and blossoming
trees.

\begin{thm}\label{thm:open}
  Let $M$ be a plane map vertex-rooted at
  $r$, 
  and suppose that $M$ is endowed with a minimal accessible
  orientation 
  $O$. Then $M$ admits a unique edge-partition $(\TT M,\CC M)$ such
  that:

  \begin{itemize}
  \item edges in $\TT M$ (called \emph{tree edges}) form a spanning tree
    of $M$, rooted at $r$, on which the restriction of $O$ is
    accessible;
  \item any edge in $\CC M$ (called a \emph{closure edge}) is a
    saturated clockwise edge in the unique cycle it forms with edges
    in $\TT M$.
  \end{itemize}
  Let us call such a partition a \emph{\TCP}.
\end{thm}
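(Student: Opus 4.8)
The plan is to prove existence and uniqueness separately, after first reducing the statement to a cleaner combinatorial form. Observe that the first condition — that the restriction of $O$ to $\TT M$ be accessible — forces $\TT M$ to be a spanning \emph{in-tree} rooted at $r$: in a tree there is a unique path between any two vertices, so demanding that every vertex reach $r$ through tree edges is equivalent to demanding that every tree edge point towards $r$. Consequently each non-root vertex carries exactly one outgoing tree edge (its \emph{parent} edge), and a \TCP is the same thing as a choice of one outgoing parent edge per non-root vertex whose associated parent pointers are acyclic, the remaining (non-parent out-)edges being the closure edges, which are required to be saturated and clockwise. Accessibility of $O$ guarantees that every non-root vertex has at least one outgoing edge, so such a choice is possible in principle; the whole content of the theorem is that exactly one choice meets the clockwise condition.

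For existence I would build $\TT M$ by a depth-first exploration started at $r$ that follows edges \emph{against} their orientation and, at each vertex, scans its incoming edges in clockwise order. The edges through which new vertices are first discovered are declared tree edges, all others closure edges. Accessibility of $O$ ensures every vertex is eventually reached, so $\TT M$ is spanning, and as a set of discovery edges it is acyclic; hence it is a spanning in-tree rooted at $r$ and the first condition holds. The second condition is the substantial part: one must check that every non-tree edge $e$ is saturated and that the unique cycle it forms with $\TT M$, traversed along $e$, is clockwise. Saturation is vacuous in the non-fractional setting and, in the $k$-fractional setting, follows from the impossibility of matching a half-oriented edge. For the clockwise property, the clockwise scanning order makes each back edge locally clockwise, and I expect minimality of $O$ to be exactly what forbids the global obstruction: a counterclockwise fundamental cycle should allow a sequence of flips producing an $\alpha$-orientation strictly below $O$ in Felsner's lattice of $\alpha$-orientations, contradicting that $O$ is minimal.

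For uniqueness, suppose two partitions satisfy the conditions, with spanning in-trees $\mathcal{T}$ and $\mathcal{T}'$, and assume $\mathcal{T}\neq\mathcal{T}'$. Pick an edge $e=u\to w$ in $\mathcal{T}'\setminus\mathcal{T}$; then $e$ is a closure edge of the first partition, so its $\mathcal{T}$-fundamental cycle $C_e$ is clockwise along $e$. Since $e\in\mathcal{T}'$ and $\mathcal{T}'$ is acyclic, the tree-path part of $C_e$ must use some edge $f\in\mathcal{T}\setminus\mathcal{T}'$, which is in turn a clockwise closure edge of the second partition. I would then reach a contradiction by combining these two clockwise cycles: superposing $C_e$ with the $\mathcal{T}'$-fundamental cycle of $f$ and taking their symmetric difference along the shared edges should yield a circuit whose orientation is counterclockwise, contradicting minimality of $O$. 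Hence $\mathcal{T}=\mathcal{T}'$.

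The main obstacle, in both parts, is the planar bookkeeping hidden in the word \emph{clockwise}: making precise how the clockwise scanning order at a vertex interacts with the in- and out-orientation of the incident edges, and how the geometric orientation of a fundamental cycle — which side carries the outer face — is controlled. Once these conventions are fixed, the heart of the matter is the single implication that minimality of $O$ rules out counterclockwise closure edges, equivalently that the clockwise exploration of a minimal accessible orientation is globally consistent. This is precisely the point where the rooted argument of~\cite{Bernardi07} has to be adapted, and it is what makes the hypotheses of minimality and accessibility indispensable.
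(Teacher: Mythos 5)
Your reformulation of the statement and the overall existence/uniqueness architecture are reasonable, but both halves of your argument stop exactly at the point where the work has to be done, and the mechanism you propose for that point does not work as stated. You repeatedly want to derive a contradiction with minimality by exhibiting a counterclockwise cycle that could be ``flipped'': first from a counterclockwise fundamental cycle of a non-tree edge, then from the symmetric difference of two clockwise fundamental cycles. But the fundamental cycle of a closure edge is not a directed circuit of $O$: its tree edges are oriented towards the root $r$, hence are \emph{not} coherently oriented around the cycle, and minimality only forbids counterclockwise \emph{oriented} cycles. So neither a counterclockwise fundamental cycle nor a symmetric difference of clockwise ones produces anything flippable in Felsner's lattice. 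The sentence ``I expect minimality of $O$ to be exactly what forbids the global obstruction'' is precisely the theorem, not a proof of it.

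There is a second, more structural problem with your existence argument. The clockwise DFS you describe is essentially the contour algorithm of Bernardi (Proposition~\ref{prop:bernardi} in the paper), and the paper is explicit that its correctness relies on the root face being the outer face: the whole point of Theorem~\ref{thm:open} is to remove that hypothesis, and for a root vertex not incident to the outer face there is not even a canonical corner at which to start the scan. The paper's actual proof takes a different route: existence is by induction on the number of faces, locating at each step an outer saturated edge with the outer face on its left (such an edge exists because outer edges cannot form a counterclockwise circuit), and, if its removal breaks accessibility, replacing it by the appropriate edge of the cut between the accessible component of $r$ and its complement; this peels off one closure edge while preserving minimality and accessibility. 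Uniqueness is then a combinatorial ``wrapping'' argument along the contour of a hypothetical second tree, producing an infinite sequence of distinct edges, with no appeal to cycle flips. If you want to salvage your plan, you would need either to prove directly that your DFS output satisfies the clockwise condition when $r$ is interior (which is false for a naive scan in general), or to switch to an argument that, like the paper's, only ever invokes minimality through genuinely oriented cycles.
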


\begin{figure}[t]
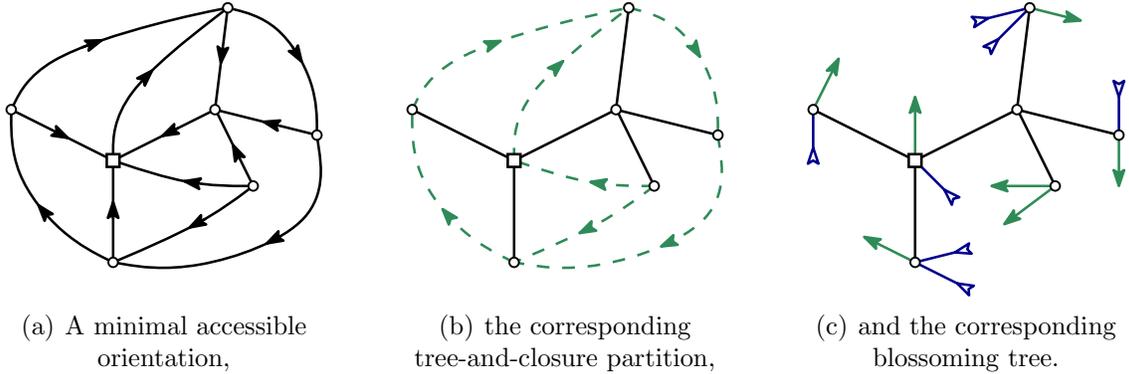

  \centering
  \subfigure[A minimal accessible
  orientation,\label{fig:closureblossom}]{\;\includegraphics[page=3,
    scale =1.2]{blossoming_to_minimal}\;}\qquad
  \subfigure[the corresponding \TCP,]{\;\includegraphics[page=2, scale=1.2]{blossoming_to_minimal}\;}\qquad
  \subfigure[and the corresponding blossoming
  tree.\label{fig:blossom}]{\;\includegraphics[page=1, scale=1.2]{blossoming_to_minimal}\;}
 \caption{From a minimal accessible orientation to a blossoming tree.}
  \label{fig:minacc_to_blossoming}
\end{figure}

Before proving this theorem, we would like to emphasize in which way
it differs from the result obtained in~\cite{Bernardi07}. In the
latter work, the outer face of a rooted map is required to be the root
face.  In this case and also in the particular case of triangulations
treated in~\cite{PouSch06}, a contour algorithm, starting at the root
edge, enables to identify the edges of $\CC M$. The proof that this
algorithm is correct relies deeply on the fact that \emph{both} the
accessibility and the minimality of the orientation are defined
according to the root face. We show here that this hypothesis is
unnecessary.

\begin{figure}[t]
  \centering \subfigure[$e$ is a closure edge of this
  map. \label{fig:induction}]{\includegraphics[height=10em]{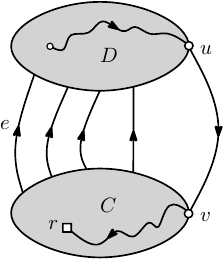}}
  \qquad\qquad \subfigure[Proof of the uniqueness: the path $\gamma$
  in $\TT M$ is~represented in fat plain green edges, and the~tree~$\TT
  M'$ in dashed blue
  edges.\label{fig:uniqueness}]{\includegraphics[height=8em]{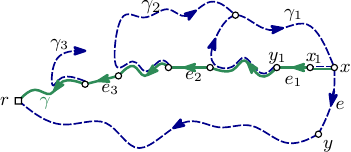}}
  \caption{Existence of a unique \TCP.}
  \label{fig:TCP}
\end{figure}

\begin{proof}
  We prove this result by induction on the number of faces of $M$. If
  $M$ has only one face, it is an accessible tree, hence the property
  is satisfied.

  Let now $n\geq2$, and suppose that the property is satisfied for any
  minimally oriented plane map with less than $n$ faces. Let $M$ be a
  vertex-rooted plane map with $n$ faces endowed with a minimal
  accessible orientation.

  We shall first prove that one of its outer edges $e$ may be removed
  to obtain a vertex-rooted map $M_{\setminus e}$ endowed with a
  minimal accessible orientation. As outer edges do not form a
  counterclockwise cycle, at least one of them has the outer face
  strictly on its left, meaning that it is saturated and is not a
  bridge. Let $(u,v)$ be such an edge and consider the map
  $M_{\setminus (u,v)}$. If it is accessible, then choose $e =
  (u,v)$. Otherwise, let $C$ be the accessible component of the root
  vertex $r$ in $M_{\setminus (u,v)}$. Then clearly $v$
  belongs to $C$ while $u$ does not. Moreover, $u$ is accessible from
  all vertices not in $C$, see Fig.~\ref{fig:induction}.
 Let $D$ denote the complement of~$C$.
  Then, the cut between $C$ and $D$, made up of saturated edges
  oriented from $C$ to $D$, is incident twice to the outer face of
  $M_{\setminus (u,v)}$. Let $e$ be the edge of the cut with the outer
  face on its left. Since $e$ is not a bridge in $M$, the map
  $M_{\setminus e}$ has $n-1$ faces and the orientation induced by
  that of $M$ is minimal and accessible.

  Hence, by induction, $M_{\setminus e}$ admits a (unique) \TCP $(\TT
  M, \CC M)$, and $(\TT M, \CC M \cup \{e\})$ is a \TCP for $M$.

  Let us now prove that $M$ does not admit any other \TCP, that is,
  does not admit any \TCP with $e$ in the tree. Suppose by
  contradiction that $(\TT M', \CC M')$ is a \TCP for $M$ with $e \in
  \TT M'$. Let us denote $e = (x,y)$, oriented from $x$ to $y$, and
  consider the simple path $\gamma$ from $x$ to the root vertex in
  $\TT M$. At least one edge of $\gamma$ does not belong to $\TT M'$
  (otherwise this would create a cycle in the tree), hence $\gamma$
  contains saturated edges. Let $e_1 =(x_1, y_1)$ be the first
  saturated edge in $\gamma$. Since $x_1$ lies in the subtree $\TT
  M'(x)$ of $\TT M'$ rooted at $x$, $e_1$ belongs to $\CC M'$. Hence
  $y_{1}$ is explored after $x_{1}$ in the clockwise contour of $\TT
  M'$. Moreover, because $(x,y)$ has the outer face on its left, the
  path $\gamma_{1}$ from $y_{1}$ to the root vertex of $\TT M'$ cannot
  wrap $(x,y)$. Moreover, because $(x,y)$ has the outer face on its
  left, it cannot be wrapped by a closure edge.  Hence $y_1$ belongs
  to $\TT M'(x)$, see Fig.~\ref{fig:uniqueness}.  In particular, there
  exists another (saturated) edge $e_2$ of $\gamma$ that does not
  belong to $\gamma_1$ nor $\TT M'$ for which the same reasoning
  applies. This implies the existence of an infinite sequence of edges
  of $\gamma$ not belonging to $\TT M'$, a contradiction.
\end{proof}

As an immediate corollary of Theorem~\ref{thm:open}, we obtain:

\begin{cor}\label{cor:blossom}
  Let $M$ be a vertex-rooted plane map endowed with a minimal
  accessible orientation $O$. Then there exists a unique vertex-rooted
  blossoming tree, endowed with an accessible orientation, the closure
  of which is~$M$ oriented with $O$.

  This blossoming tree is denoted \BB{M}.
\end{cor}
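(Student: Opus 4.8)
The plan is to read the blossoming tree directly off the tree-and-closure partition supplied by Theorem~\ref{thm:open}, and then to check that closure and this reading-off are mutually inverse. First I would invoke Theorem~\ref{thm:open} to obtain the unique \TCP{} $(\TT M,\CC M)$ of $(M,O)$, and define \BB{M} by \emph{opening} every closure edge: for each $e=(u,v)\in\CC M$, oriented from $u$ to $v$, I delete $e$ and insert an opening stem in the corner of $u$ and a closing stem in the corner of $v$ that $e$ leaves vacant. Since $\TT M$ is a spanning tree of $M$, removing all of $\CC M$ leaves a plane tree, so the result is a blossoming tree rooted at $r$ whose underlying tree is exactly $\TT M$. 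Reading opening and closing stems as outgoing and incoming half-edges, the orientation $O$ restricts to an orientation of \BB{M} with unchanged in- and out-degrees, and this orientation is accessible because $O$ restricted to $\TT M$ is accessible by the first item of the \TCP{}.

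It then remains to prove that $\overline{\BB{M}}=(M,O)$. Here I would use that opening a single closure edge is precisely the inverse of one local closure: merging the opening stem at $u$ with the following closing stem at $v$ in clockwise order recreates the edge $e=(u,v)$, oriented $u\to v$ with its bounded face on the right. Because closures are confluent — performing all admissible local closures in any order yields the same object, as recalled before Definition~\ref{def:closure} — it suffices to exhibit one sequence of local closures on \BB{M} that rebuilds $M$, and I would take the reversal of the sequence of openings. The point that must be verified is that at each step the two stems to be merged are consecutive along the outer contour, with no intervening stem. This is exactly what the second item of the \TCP{} guarantees: each closure edge is a saturated clockwise edge in the unique cycle it forms with tree edges, so since the edges of $M$ are embedded without crossings, the opening and closing stems produced by the opening are non-crossing and hence correctly nested along the clockwise contour of \BB{M}. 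This nesting is the main obstacle, since it is where the geometric content of the clockwise condition (and thus of minimality) enters; the rest is bookkeeping on half-edges and orientations.

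For uniqueness I would argue in reverse. Let $T'$ be any vertex-rooted blossoming tree carrying an accessible orientation with $\overline{T'}=(M,O)$. Closure adds no vertices and preserves in- and out-degrees, orientation and accessibility, so the edges of $T'$ form a spanning tree of $M$ on which $O$ restricts to an accessible orientation; moreover every edge created during the closure of $T'$ is, by the very definition of a local closure, a saturated clockwise edge in the unique cycle it forms with the tree edges of $T'$. Hence the pair consisting of the edges of $T'$ and the edges created by its closure is a \TCP{} of $M$. By the uniqueness part of Theorem~\ref{thm:open} it must coincide with $(\TT M,\CC M)$, and therefore $T'=\BB{M}$. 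This settles both existence and uniqueness.
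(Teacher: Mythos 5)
Your proof is correct and follows essentially the same route as the paper: both obtain \BB{M} by taking the tree-and-closure partition of Theorem~\ref{thm:open} and cutting each edge of \CC M into an opening and a closing stem, with uniqueness inherited from the uniqueness of that partition. You merely spell out the two verifications (that the closure re-creates exactly the edges of \CC M, and that any preimage tree induces a \TCP of $M$) that the paper's two-line proof leaves implicit.
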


\begin{proof}
Let $M$ be a vertex-rooted plane map endowed with a minimal accessible
orientation~$O$. Consider the \TCP $(\TT M, \CC M)$ of $M$. The edges
of the blossoming tree $\BB{M}$ are the edges of $\TT M$ and each edge
of $\CC M$ is cut in two to produce a pair of opening and closing stems (see Fig.\ref{fig:minacc_to_blossoming}).
\end{proof}

Hence, in any particular case where a family of plane maps may be
canonically endowed with a family of specific minimal and accessible
orientations, Corollary~\ref{cor:blossom} gives a bijection between
that family of maps and a family of blossoming trees with the same
distribution of in- and out- degrees. Whenever these trees are easily
described and enumerated, this leads to a bijective proof of
enumerative results.

\subsection{Effective opening and closure}\label{sec:effective}
Let us point out some facts about the complexity of computing effectively the
closure of a blossoming tree and the opening of a map. 

To close a blossoming tree into a map, it is enough to perform a
contour process and to match iteratively each opening stem with its
corresponding closing stem along the way. Each time a new opening stem
is explored, it can be stored in a stack structure
(Last-In-First-Out), out of which one stem will be popped each time a
closing stem is explored. This process goes around the outer face at
most twice, hence the total time complexity is linear in the number of
edges of the final map.

Unfortunately, things are not so smooth when it comes to opening an
oriented plane map into its blossoming tree. Since the proof of
Theorem~\ref{thm:open} is essentially constructive, it yields an
algorithm which identifies a closure edge at each step. Each of these
steps consists in computing an accessible component, which can be done
in linear time, resulting in a total quadratic complexity.

However, in the case where the map is corner-rooted in the outer face,
the opening operation can be realized in linear time by an adapted
depth-first search process. This construction has been introduced in a
series of papers (see for example \cite{Sch98,PouSch06}) in some
particular cases and formally stated in~\cite{Bernardi07} (where it
appears in a slightly different form since the convention for tree
edges orientation is opposite to ours).

\begin{prop}[\cite{Sch98,PouSch06,Bernardi07}]\label{prop:bernardi}
  Let $M$ be a corner-rooted plane map in which the outer and root
  faces coincide, and assume that $M$ is endowed with a minimal
  orientation.

  Then, the \TCP of $M$ can be computed in linear time: initialize $\TT M$
  and $\CC M$ as empty sets, and $v$ and $e$ to be respectively the root
  vertex and the root edge; then repeat the following steps until all
  edges belong either to $\CC M$ or to $\TT M$:

  \begin{itemize}
  \item if $e$ does not belong to $\CC M$ nor $\TT M$ yet, add it to
    $\CC M$ if it is oriented outwards $v$, and to $\TT M$ otherwise;
  \item if $e$ belongs to $\TT M$, switch $v$ to the other extremity
    of $e$;
  \item update $e$ to the next edge around $v$ in clockwise order.
   \end{itemize}
\end{prop}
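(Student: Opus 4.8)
The plan is to reduce the statement to the existence and uniqueness of the \TCP already granted by Theorem~\ref{thm:open} and Corollary~\ref{cor:blossom}. Since that unique partition $(\TT M,\CC M)$ is characterized abstractly, it suffices to prove three things: that the proposed process terminates after classifying every edge, that the partition it outputs satisfies the two defining conditions of a \TCP, and that it runs in linear time. By the uniqueness part of Theorem~\ref{thm:open}, the second point forces the output to be \emph{the} \TCP, so correctness follows.

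I would first recast the loop as a clockwise contour walk: starting from the root corner, the process visits corners one after another in clockwise order, the distinctive feature being how it behaves at each edge $e$ seen from its current vertex $v$. The guiding observation is that an edge added to $\TT M$ is always oriented inward to $v$ at the moment it is added, so that tree edges are consistently oriented towards the root — precisely the accessibility required in a \TCP. I would then set up the invariant that, after each step, the edges already placed in $\TT M$ form a tree containing $r$ that is explored in depth-first fashion, each of its edges being crossed once on the way down and once on the way up, while each edge placed in $\CC M$ is examined once from each of its two incident outer corners. Termination and the fact that every edge is eventually classified follow from the accessibility of the orientation: every vertex admits an oriented path towards the root face, which forces the contour to reach it and to attach it to the already-explored region by a tree edge. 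The linear bound is then immediate, since the walk performs $2\lvert\TT M\rvert + 2\lvert\CC M\rvert = O(\lvert E\rvert)$ corner visits, each of constant cost.

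The heart of the argument — and the step I expect to be the main obstacle — is the correctness of the purely \emph{local} orientation test: why an edge first encountered oriented outward from $v$ is genuinely a closure edge, and one first encountered inward a tree edge, in a way globally consistent with the unique \TCP. Here I would exploit that the process is exactly the inverse of the stack-based closure contour of Section~\ref{sec:effective}. By Corollary~\ref{cor:blossom}, $M$ is the closure of a unique accessible blossoming tree $\BB{M}$; in that closure each opening stem becomes an outgoing half-edge and each closing stem an incoming one, and the closure edge joining them is oriented from the opening vertex to the closing vertex. Consequently, walking clockwise around $M$ from the root corner, a closure edge is met outward at the corner carrying its former opening stem and inward at the corner carrying its closing stem, whereas a tree edge of $\BB{M}$, oriented towards the root, is met inward the first time it is traversed on the way down. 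I would therefore prove, by induction along the contour, that the walk on $M$ retraces step by step the closure contour of $\BB{M}$: the skip at an outward edge mirrors the pushing of an opening stem, the skip at an already-classified closure edge mirrors the popping of the matching closing stem, and the tree-edge moves mirror the traversal of the tree edges of $\BB{M}$. The nesting of closure edges needed for this correspondence (the LIFO discipline) is guaranteed by planarity, and it is exactly minimality that forbids any counterclockwise chord and hence any misclassification. Once this identification is established, $\TT M$ and $\CC M$ coincide with the tree and closure edges of $\BB{M}$, which form the unique \TCP, completing the proof.
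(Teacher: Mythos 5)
Your overall strategy is sound and is, in substance, the standard proof. Note that the paper itself does not prove this proposition: it defers entirely to \cite{Bernardi07}, merely observing that Bernardi's setting (a map with a distinguished spanning tree and its canonical ``tree orientation'') coincides with the present one because tree orientations are exactly the minimal $\alpha$-orientations for accessibly feasible $\alpha$. Your self-contained route --- invoke Theorem~\ref{thm:open} for existence and uniqueness of the \TCP, then show by induction along the clockwise contour that the algorithm's walk retraces the contour of \BB{M} and hence reproduces that unique partition --- is essentially the argument of the cited reference, and reducing correctness to the uniqueness statement is a clean way to organize it. The accounting of corner visits ($2\lvert\TT M\rvert+2\lvert\CC M\rvert$ steps of constant cost) for the linear bound is also correct.

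One step deserves to be tightened, and it is precisely the one you single out as the main obstacle. The dangerous case is a closure edge first encountered at its \emph{target}: there it is oriented inwards $v$, and the algorithm would wrongly place it in $\TT M$. What rules this out is not ``planarity plus minimality'' as you write, but the fact that \BB{M} is \emph{balanced}: since the root corner lies in the outer face of $M$, no local closure wraps it, so in the linear contour word read from the root corner every opening stem precedes its matching closing stem (the restriction to $\{b,\bar b\}$ is a Dyck word), and every closure edge is therefore already classified at its source before its target corner is reached. Minimality has already been spent in Theorem~\ref{thm:open} to guarantee that closure edges are clockwise, and planarity gives the nesting of matched pairs, but neither gives the precedence of openings over closings. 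This is exactly where the hypothesis that the root face is the outer face enters, and it is why the local test fails for annular maps, for which the paper falls back on the quadratic procedure of Theorem~\ref{thm:open}. With that point made explicit, your induction closes.
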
 

The proof of this proposition can be found in \cite{Bernardi07}, in a
different setting: it deals with rooted planar maps endowed with a
distinguished spanning tree, for which an orientation is canonically
defined by orienting tree edges towards the root, and any other
clockwise in the unique cycle it forms with the tree. The set of all
these \emph{tree orientations} is hence equal to the set of minimal
$\alpha$-orientations for all accessibly feasible $\alpha$.

Observe that as opposed to the purpose of \cite{Bernardi07}, our work
aims at defining one canonical orientation for each map. This requires
to seek for an appropriate family of functions $\alpha$ for each
family of maps we want to enumerate, as already mentioned at the end
of Section~\ref{sec:edge-partition}. In the next section, we
demonstrate that numerous previously known bijections can be easily
retrieved as soon as we exhibit the adequate $\alpha$.


\section{Recovering previous bijections}\label{sec:PrevBij}

Previous bijections between planar maps and blossoming trees obtained
after Schaeffer~\cite{Schaeffer97} can all be seen as applications of
Corollary~\ref{cor:blossom}, and more precisely of
Proposition~\ref{prop:bernardi}. In this section, we only consider
corner-rooted planar maps, that is, corner-rooted plane maps with the
root corner in the outer face. Hence the blossoming trees involved are
\emph{balanced}, meaning that no local closure may wrap the root
corner. More formally, let us define the (non-cyclic) \emph{contour
  word} of a rooted blossoming tree as the natural counterpart of the
cyclic contour word starting at the root corner, see
Definition~\ref{def:contourword}. Then a rooted blossoming tree is
said \emph{balanced} if the restriction of its contour word on
$\{b,\bar b\}$ is a Dyck word.

The intuition behind these bijections was originally relying on the
interpretation of the enumerative formulas. We want to emphasize here
that most of the time, a natural choice for a function $\alpha$ leads
to the same construction. We only sketch how to retrieve the proofs
that can be found in the original papers.

\subsection{Maps with prescribed vertex degree sequence}\label{sub:degseq}
  
\paragraph{Eulerian maps}
The first bijection obtained by Schaeffer in~\cite{Schaeffer97}
concerns planar \emph{Eulerian maps} with prescribed vertex degrees,
and in particular 4-regular planar maps with $n$ vertices, that
correspond bijectively to planar maps with $n$ edges.

This bijection can be recovered in the following way. First recall
that a map is said \emph{Eulerian} if its vertices have even degrees.
It is a classical result that Eulerian maps may be endowed with
orientations with equal in- and out-degrees for each vertex. Besisdes these 
orientations are accessible. In particular, the minimal \emph{Eulerian orientation}
of a given plane Eulerian map can be obtained recursively by orienting
clockwise the outer cycle and erasing it, see
Fig.~\ref{fig:oriEulerian}.

\begin{figure}
  \centering 
  \subfigure[\label{fig:oriEulerian}Its minimal Eulerian orientation,]{\includegraphics[page=3,scale=0.7]{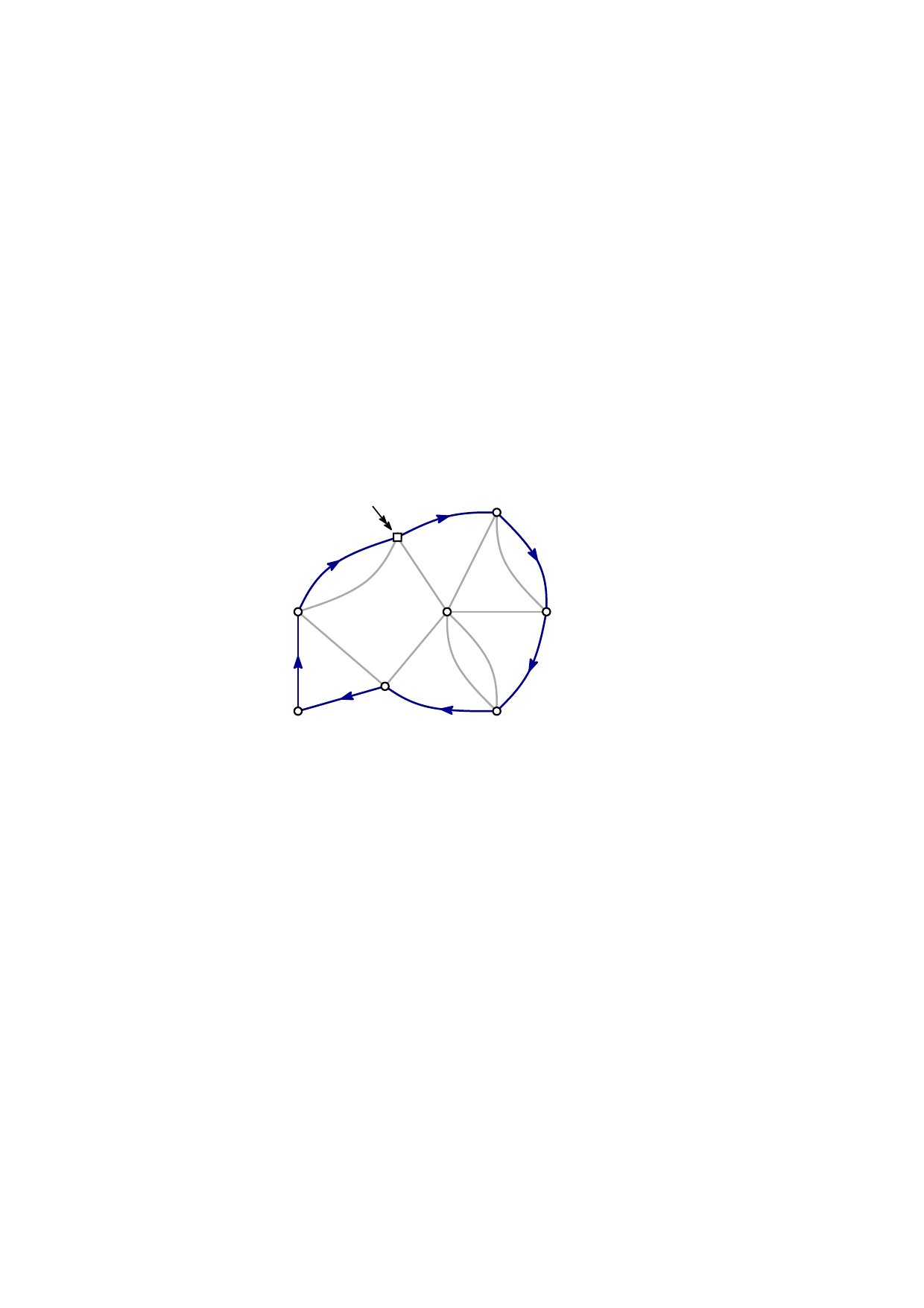}} \qquad
  \subfigure[its \TCP,]{\includegraphics[page=4,scale=0.7]{eulerienne}} \qquad
  \subfigure[and the corresponding balanced blossoming
  tree.]{\includegraphics[page=5,scale=0.7]{eulerienne}}
  
  \caption{Blossoming trees for an Eulerian map.}
  \label{fig:eulerian}
\end{figure}

The generic opening of a planar rooted Eulerian map $M$ with $n_i$
vertices of degree $2i$ for any $i \in \[[1,k\]]$ (and hence $n =
\sum_i i n_i$ edges) endowed with its minimal Eulerian orientation
leads to a balanced rooted blossoming tree with the same distribution
of in- and out-vertex degrees, and both $\ell = 1 + \sum_i(i-1)n_i$
opening and closing stems.  Observe that each non-root vertex has
exactly one outgoing edge that belongs to the blossoming tree. Moreover,
since the tree is balanced, the root corner is necessarily followed by
an opening stem.
\smallskip

To enumerate balanced blossoming trees, a general strategy is to
consider a larger family of planted blossoming trees that is stable by
rerooting and in which the proportion of balanced ones can be
evaluated. Let us sketch this strategy in the case of Eulerian maps,
following~\cite{Schaeffer97}.  Let us first consider planted trees with
$n_i$ nodes of degree $i+1$ (\ie arity $i$) for any $i>0$ and hence
$\ell + 1 = 2 + \sum_i(i-1)n_i$ leaves \emph{including the root one};
they are enumerated by (\cite{HaPrTu}):
\[
T_{n_1,\dots,n_k} ~=~ \frac1n \binom{n}{\ell,n_1,\dots,n_k} ~=~ \frac{(n-1)!}{\ell!}
\prod_{i=1}^k \frac1{n_i!}.
\]
 Start from one such tree and add $(i-1)$ opening stems on each node of arity
$i$. The total number of trees that can be obtained in this way is then: 
\[
B_{n_1,\dots,n_k} ~=~ \prod_{i=1}^k\binom{2i-1}{i}^{n_i} T_{n_1,\dots,n_k} ~=~
\frac{(n-1)!}{\ell!} \prod_{i=1}^k \binom{2i-1}{i}^{n_i}\frac1{n_i!}.
\]
Consider now each leaf (including the root one) as a closing stem, it
yields a blossoming tree with $\ell-1$ opening stems and $\ell+1$
closing stems, whose closure gives a map with two unmatched closing
stems.
\smallskip

Among all the planted blossoming trees that give the same map, a proportion
$2/(\ell+1)$ of them are rooted on one of the unmatched stems. For those ones,
changing their root (closing) stem into an opening one leads to a balanced
blossoming tree. Hence:

\bigbreak

\begin{prop}{\bf(Eulerian planar maps with prescribed vertex degrees)}
  The number of rooted planar Eulerian maps with $n_i$ vertices of
  degree $2i$ for any $i \in \[[1,k\]]$ is given by:
  \[
  \frac{2 \cdot (n-1)!}{(\ell+1)!} \prod_{i=1}^k \binom{2i-1}{i}^{n_i}\frac1{n_i!}.
  \]
\end{prop}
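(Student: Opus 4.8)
The plan is to combine the tree-counting formula already derived with a rerooting argument that extracts the proportion of blossoming trees corresponding to genuine Eulerian maps. The quantity $B_{n_1,\dots,n_k}$ counts planted blossoming trees whose closure produces a map with exactly two unmatched closing stems; by Corollary~\ref{cor:blossom} and the discussion of Eulerian orientations, each rooted planar Eulerian map corresponds to a unique \emph{balanced} blossoming tree, so the task reduces to counting balanced trees among all planted ones. First I would make precise the fibre of the closure map: fix a target Eulerian map $M$ and ask how many of the $B_{n_1,\dots,n_k}$ planted blossoming trees close to $M$. Since the underlying rooted map is obtained by closure regardless of where the root stem sits, and rerooting a planted tree amounts to cyclically shifting the root corner among the $\ell+1$ closing stems (the leaves, including the root leaf), each target map is hit the same number of times up to this symmetry.

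\medskip

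Next I would carry out the counting of unmatched stems. A planted blossoming tree in this family has $\ell-1$ opening stems and $\ell+1$ closing stems, so after total closure precisely two closing stems remain unmatched. The key observation is that a planted tree yields a \emph{balanced} blossoming tree — equivalently, one whose closure wraps no stem around the root corner — exactly when its root (closing) stem is one of the two unmatched stems and is then reinterpreted as opening. Among all rerootings of a given tree that produce the same closure map, the root corner lands on an unmatched closing stem in a fraction $2/(\ell+1)$ of cases, since there are two unmatched stems out of $\ell+1$ closing stems total. For each such rerooting, switching the root closing stem into an opening stem produces a balanced blossoming tree with $\ell$ opening and $\ell$ closing stems, which is exactly the datum corresponding to a rooted Eulerian map.

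\medskip

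Finally I would assemble the count. Multiplying $B_{n_1,\dots,n_k}$ by the proportion $\frac{2}{\ell+1}$ gives the number of balanced blossoming trees, hence by Corollary~\ref{cor:blossom} the number of rooted planar Eulerian maps with the prescribed degree sequence:
\[
\frac{2}{\ell+1}\, B_{n_1,\dots,n_k} \;=\; \frac{2}{\ell+1}\cdot\frac{(n-1)!}{\ell!}\prod_{i=1}^k\binom{2i-1}{i}^{n_i}\frac1{n_i!} \;=\; \frac{2\cdot(n-1)!}{(\ell+1)!}\prod_{i=1}^k\binom{2i-1}{i}^{n_i}\frac1{n_i!},
\]
which is the claimed formula. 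The main obstacle, and the step deserving the most care, is justifying that the rerooting operation acts cleanly on the fibres of the closure map so that the fraction $2/(\ell+1)$ is exactly right: one must verify that distinct rerootings onto the two unmatched stems produce genuinely distinct balanced trees and that every balanced tree arises this way exactly once, i.e.\ that the correspondence between balanced blossoming trees and (unrooted-data) planted trees with a marked unmatched stem is a bijection. This hinges on the fact that the cyclic contour word determines the tree up to the choice of root corner, together with the uniqueness statement of Theorem~\ref{thm:open} guaranteeing that each Eulerian map opens to a single balanced tree.
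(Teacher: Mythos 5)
Your proposal follows exactly the paper's own argument: count the planted blossoming trees $B_{n_1,\dots,n_k}$, observe that each closes to a map with two unmatched closing stems, apply the rerooting/cycle argument to extract the proportion $2/(\ell+1)$ of balanced trees, and invoke the bijection of Corollary~\ref{cor:blossom}. The only difference is that you explicitly flag the need to verify that rerooting acts freely on the fibres of the closure, a point the paper also leaves implicit, so the two proofs are essentially identical.
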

 
An interesting particular case concerns rooted planar 4-regular maps
with $n$ vertices, that correspond bijectively to rooted planar maps
with $n$ edges. Indeed, let $M$ be a planar map and consider its
radial map $\RR M$ (see Fig.\ref{fig:radial}).
By convention, $\RR M$ is rooted with the same root face as $M$, and
its root vertex corresponds to the root edge of $M$.
\smallbreak

It is clear from its definition that $\RR M$ is a 4-regular map and
that reciprocally every rooted 4-regular map with $n$ vertices
corresponds to a unique rooted planar map with $n$ edges.  According
to our generic bijective scheme, rooted 4-regular maps are in
bijection with balanced planted blossoming trees with $n$ nodes, of
in- and out-degrees 2, that is obtained from a planted binary tree by
adding one opening stem to each node. Hence:
\begin{cor}{\bf(Planar maps with prescribed number of edges)}\label{cor:generalmaps}
  The number of rooted planar maps with $n$ edges is:
  \[
  \frac{2 \cdot 3^n}{(n+2)(n+1)} \binom{2n}{n}.
  \]
\end{cor}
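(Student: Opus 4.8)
The plan is to specialize the Eulerian enumeration of the previous subsection to the $4$-regular case. By the radial construction (Fig.~\ref{fig:radial}), rooted planar maps with $n$ edges are in bijection with rooted $4$-regular maps with $n$ vertices, so it suffices to count the latter. A $4$-regular map is Eulerian, hence carries its canonical minimal accessible Eulerian orientation; applying Corollary~\ref{cor:blossom}, refined by the balancing property (the root corner lies in the outer face), identifies rooted $4$-regular maps with $n$ vertices with balanced planted blossoming trees on $n$ nodes, each of in- and out-degree $2$, obtained from a planted binary tree by attaching a single opening stem to each node.

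First I would count the ambient, rerooting-stable family of planted blossoming binary trees. The underlying planted binary trees on $n$ nodes are enumerated by the Catalan number $\frac{1}{n+1}\binom{2n}{n}$; attaching one opening stem to a node of arity $2$ may be done in $\binom{2\cdot 2-1}{2}=3$ ways (the stem is inserted in one of the three positions determined by the parent edge and the two children), so the family has size
\[
B \;=\; 3^{n}\,\frac{1}{n+1}\binom{2n}{n}.
\]
Each such tree has $\ell+1=n+2$ leaves, read as closing stems, against $n$ opening stems, so its closure is a map carrying exactly two unmatched closing stems, just as in the Eulerian sketch.

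Next I would run the rerooting argument exactly as in the Eulerian case: among the planted blossoming trees whose closure is a given map, the fraction rooted at one of the two unmatched stems is $\tfrac{2}{\ell+1}=\tfrac{2}{n+2}$, and turning that root closing stem into an opening one yields precisely the balanced trees. Hence the number of rooted $4$-regular maps with $n$ vertices, equivalently of rooted planar maps with $n$ edges, is
\[
B\cdot\frac{2}{n+2} \;=\; \frac{2\cdot 3^{n}}{(n+1)(n+2)}\binom{2n}{n},
\]
which is the announced formula (classically due to Tutte).

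I expect the delicate points to be the two inputs imported from the Eulerian analysis rather than the algebra, which is routine. The first is checking that the radial map inherits a rooting that is a corner rooting in its outer face, so that the balanced refinement of Corollary~\ref{cor:blossom} genuinely applies and no spurious factor creeps in. The second, and the real crux, is the justification of the proportion $\tfrac{2}{\ell+1}$: one must verify that the planted blossoming trees closing to a fixed map are exactly its stem-rerootings, that precisely two of the $\ell+1$ stem-rootings fall on unmatched stems, and that distinct such rerootings produce distinct balanced trees. This cycle-lemma-type bookkeeping is what underlies the whole Eulerian construction, and once it is granted the corollary is immediate.
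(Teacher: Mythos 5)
Your proposal follows exactly the paper's route: the radial construction identifies rooted planar maps with $n$ edges with rooted $4$-regular maps with $n$ vertices, the minimal Eulerian orientation and the generic opening turn these into balanced planted blossoming binary trees with one opening stem per node, and the $2/(\ell+1)$ rerooting proportion from the Eulerian subsection yields the formula. The only (harmless) difference is that you recount the ambient family directly via Catalan numbers instead of specializing the general degree-sequence formula, and you correctly identify the cycle-lemma bookkeeping behind the proportion $2/(n+2)$ as the point that the paper, like you, takes from the preceding Eulerian argument.
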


\paragraph{General maps}
In \cite{BDG02}, the bijection for Eulerian maps is generalized into a
bijection between planar maps with prescribed vertex degree sequence
and some blossoming trees. We sketch in this paragraph how this
construction can be derived from our generic scheme.

\smallbreak

\begin{figure}[t]
  \centering \subfigure[\label{fig:non-Eulerian-closure}The minimal quasi-Eulerian orientation,]{%
    \includegraphics[page=4,scale=0.7]{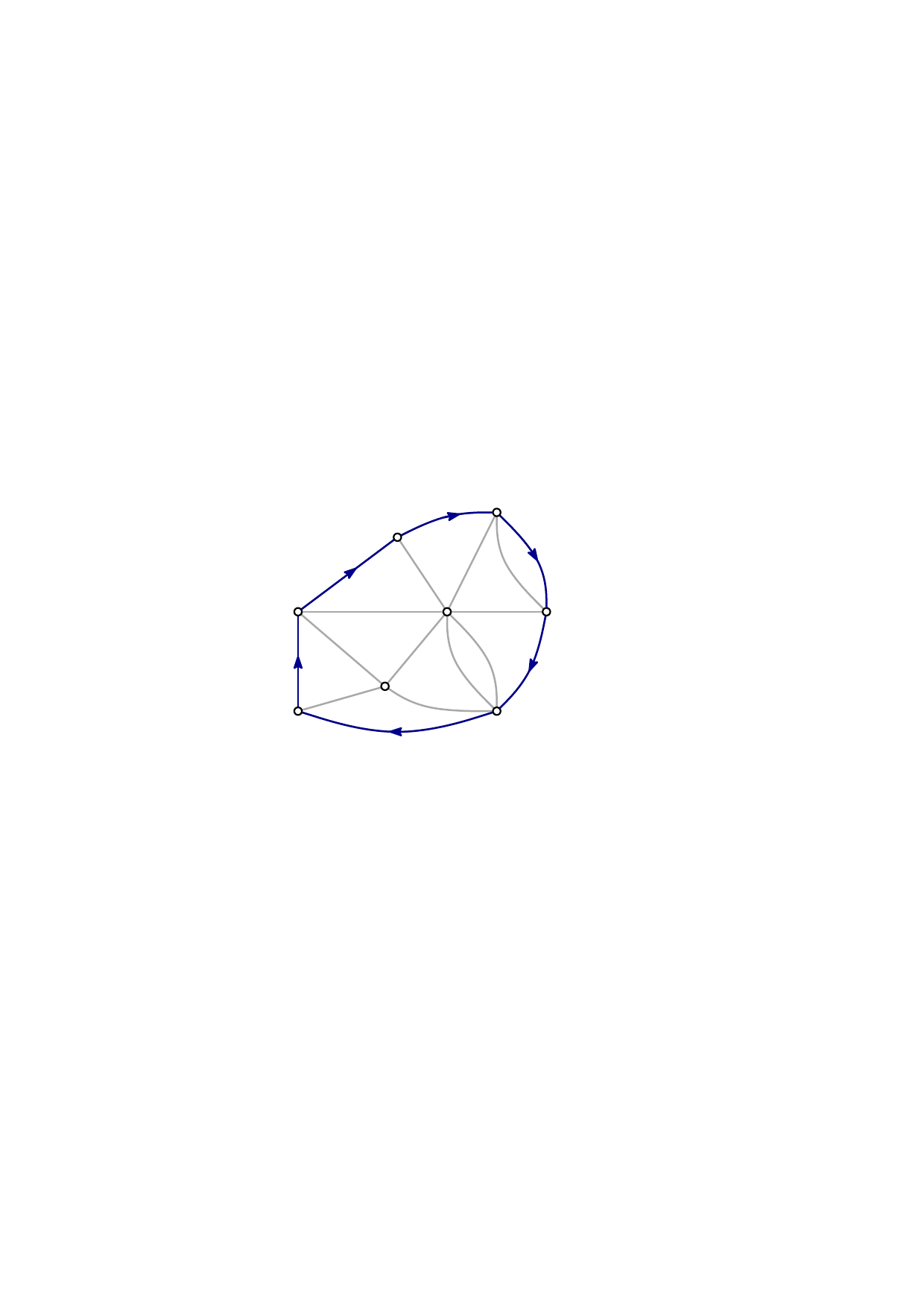}} \qquad
  \subfigure[the \TCP,]{\includegraphics[page=5,scale=0.7]{noneulerienne}} \qquad
  \subfigure[\label{fig:non-Eulerian-blossom}and the corresponding balanced blossoming
  tree.]{\includegraphics[page=6,scale=0.7]{noneulerienne}}
  \caption{Blossoming trees for general maps: the generic R-case.}
  \label{fig:non-Eulerian-generic}
\end{figure}

Any maps may be endowed with a \emph{quasi-Eulerian} orientation, that
is, a partial orientation (or a 2-fractional orientation) with equal
in- and out-degrees for each vertex. In particular, as minimal
Eulerian orientations, minimal quasi-Eulerian orientations can be
obtained recursively; orient clockwise the outer cycle (with the
convention that an edge adjacent twice to the outer face is partially
oriented in both directions) and iterate after erasing outer edges
(see Fig.~\ref{fig:non-Eulerian-closure} and ~\ref{fig:isthmus-closure}).

Let $M$ be a rooted map endowed with its minimal quasi-Eulerian
orientation. Opening $M$ gives a rooted balanced blossoming tree
endowed with an accessible $2$-fractional orientation such that the
in- and out-degrees of each vertex are equal. Now, to characterize
blossoming trees that admit such a $2$-fractional orientation, it is
convenient to follow~\cite{BDG02} and introduce the notion of
\emph{charge} of a subtree as the difference between the numbers of
its closing stems and opening stems. It is then easily seen that a
blossoming tree with total charge 0 admits an orientation with equal
in- and out-degrees at each vertex if and only if its proper subtrees
all have charge 0 or 1. More precisely, subtrees planted at a
saturated edge have charge 1, while those planted at a bi-oriented
edge have charge 0. This corresponds respectively to the R- and
S-trees in~\cite{BDG02}.
\smallskip

In particular, in the generic case where the root edge $e$ of $M$ is
not an isthmus, it is oriented clockwise and thus belongs to the
closure, therefore \BB{M} consists of an R-tree and an opening stem
situated right after the root corner (see
Fig.~\ref{fig:non-Eulerian-generic}). In the special case where $e$ is
an isthmus, it is partially oriented and belongs to the blossoming
tree $\BB M$. Therefore the right subtree of $\BB M$ is an S-tree
carried by $e$ and the other subtrees also form an S-tree (see
Fig.~\ref{fig:non-Eulerian-isthmus}).
\smallskip

Let us mention that the enumeration of these trees is not
straightforward (and is carried out in Section~3 of~\cite{BDG02}). The
main difficulty comes from the fact that R-trees are not stable by
rerooting.

\begin{figure}
  \centering \subfigure[\label{fig:isthmus-closure}The minimal quasi-Eulerian orientation,]{%
    \includegraphics[page=1,scale=0.7]{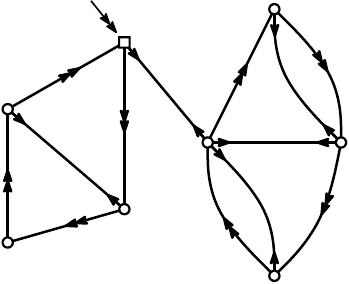}} \qquad
  \subfigure[the \TCP,]{\includegraphics[page=2,scale=0.7]{noneulerienne_isthme}} \qquad
  \subfigure[and the corresponding balanced blossoming
  tree.]{\includegraphics[page=3,scale=0.7]{noneulerienne_isthme}}
  \caption{Blossoming trees for general maps: the isthmus S-case.}
  \label{fig:non-Eulerian-isthmus}
\end{figure}

\bigbreak

\subsection{$m$-Eulerian maps with prescribed degree sequence}\label{sub:mEul}

In \cite{MBMSch00}, the authors define \emph{$m$-Eulerian} maps as
bipartite maps such that black vertices all have degree $m$ and each
white vertex has degree multiple of $m$. We give in this section a
much shorter proof that these maps are in bijection with the so-called
\emph{$m$-Eulerian trees}. Considering black vertices as hyper-edges,
$m$-Eulerian maps form in fact a subclass of $m$-regular hypermaps,
which boils down to Eulerian maps if $m=2$. Let us root $m$-Eulerian
maps in a white corner, and call the extremities of the root edge
respectively the \emph{white} and the \emph{black
  root vertex}.

An important feature of these maps is that they can be canonically
labelled on edges with integers in $\[[1,m\]]$ in such a way that:

\begin{itemize}
\item the root edge has label 1;
\item around each black vertex, edges are labelled from 1 to $m$ in
  clockwise order;
\item around each white vertex, edges are cyclically labelled in
  counterclockwise order; in particular, if $v$ is a white vertex with
  degree $km$, it is incident to exactly $k$ edges of label $i$
  for any $i$ in $\[[1,m\]]$.
\end{itemize}

\begin{figure}
  \centering
  \subfigure[The canonical orientation,\label{fig:const_canonic}]{\includegraphics[page=2,scale=0.65]{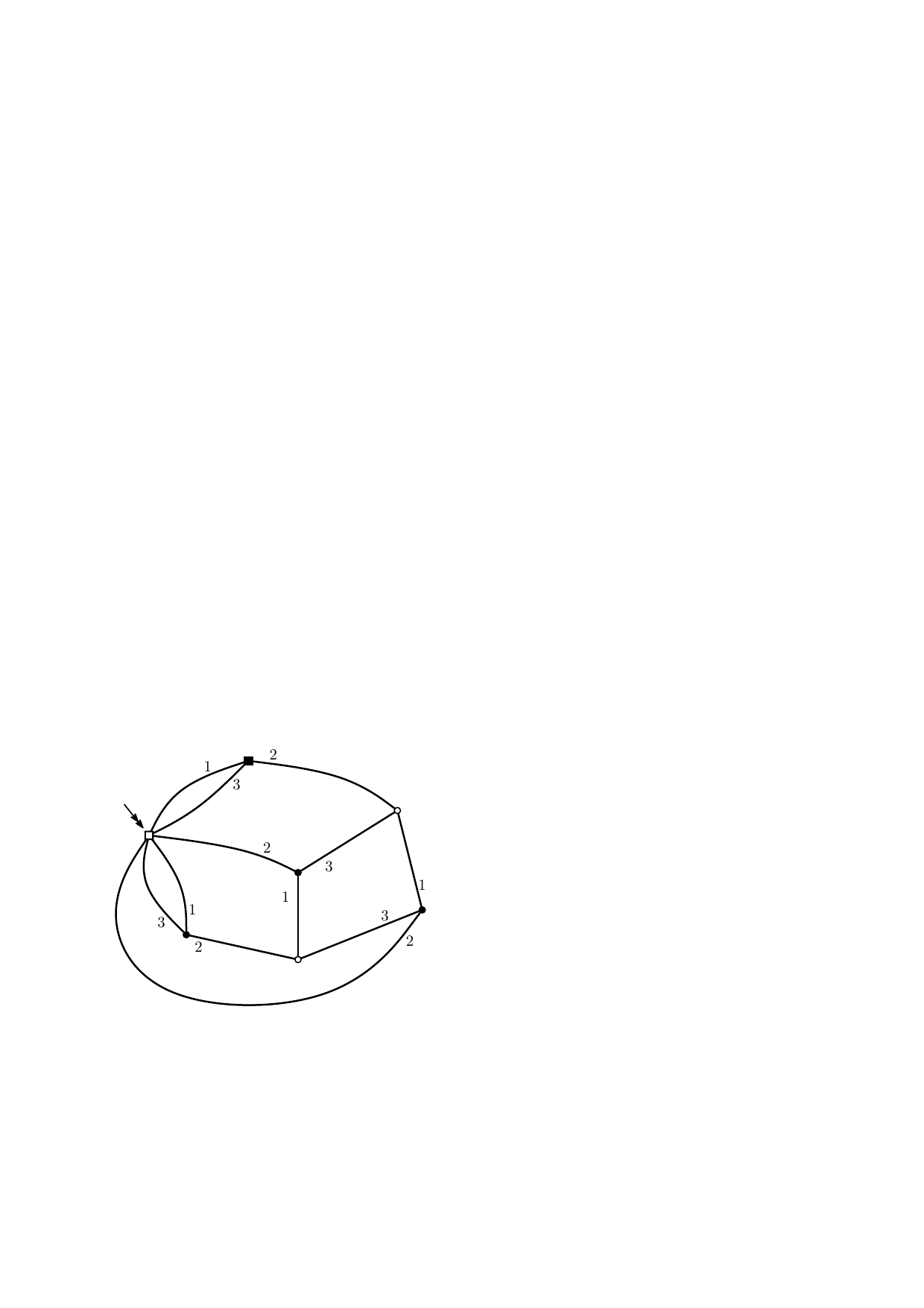}}\qquad
  \subfigure[the minimal one,\label{fig:const_min}]{\includegraphics[page=3,scale=0.65]{3-constellation}}\quad
  \subfigure[the \TCP,\label{fig:const_TCP}]{\quad\includegraphics[page=4,scale=0.65]{3-constellation}}\\
  \subfigure[the blossoming tree,\label{fig:const_blossom}]{\includegraphics[page=5,scale=0.65]{3-constellation}}\qquad
  \subfigure[the final swap,\label{fig:const_swap}]{\includegraphics[page=6,scale=0.65]{3-constellation}}\qquad
  \subfigure[and the 3-Eulerian tree.\label{fig:const_tree}]{\includegraphics[page=7,scale=0.65]{3-constellation}}
  \caption{Blossoming trees for $m$-Eulerian maps: example of a
    3-Eulerian map with two white vertices of degree 3 and one with
    degree 6, rooted on the edge with square ends.}
  \label{fig:constellation}
\end{figure}
 
\smallskip

This implies that any $m$-Eulerian map can be endowed with a
\emph{canonical} orientation: orient all edges but the 1-labelled ones
from their black end to their white end, see
Fig.~\ref{fig:const_canonic}. We will now prove that this canonical
orientation is accessible. First observe that, for any face $f$,
labels of edges incident to $f$ are alternatively equal to $i$ and
$i+1 \mod m$ for a given $i$. Hence if $i=1$ or $m$, $f$ is an
oriented cycle and all its vertices lie in the same strong connected
component. For other values of $i$, its edges are all oriented from
black to white, the $i$-labelled ones counterclockwise and the others
clockwise. Hence, if its vertices were not in one and the same
component, there would exist a cocycle which contains both an
$i$-labelled edge $e_0$ and an $i+1$-labelled edge $e_1$ incident to
$f$. Iterating this argument along the cocycle produces a cyclical
sequence of edges $(e_j)$ such that the label of $e_j$ is equal to
$i+j$. Since no 1- or $m$-labelled edge can belong to a cocycle, we
obtain a contradiction.
\bigbreak

Observe that in the canonical orientation of any $m$-Eulerian map,
each black vertex has outdegree equal to $m-1$ (and indegree equal to
1), and each white vertex of degree $km$ has outdegree equal to
$k$. Consequently, the function $\alpha$ defined by $\alpha(v) = m-1$
if $v$ is black, and $\alpha(v) = k$ if $v$ is white
with degree $km$, is accessibly feasible.

Now any rooted $m$-Eulerian map $M$ 
can be canonically embedded in the plane with the root face as outer
face, and endowed with its minimal $\alpha$-orientation. The generic
opening according to the \emph{black} root vertex leads to a rooted
blossoming plane bipartite tree $T$ such that (see
Fig.~\ref{fig:const_blossom}):

\begin{itemize}
\item the (black) root vertex has only one child -- the white root
  vertex, and carries $m-1$ opening stems;
\item any white vertex with total degree $km$ carries $k-1$ opening stems, and
  $k(m-1)$ black children or closing stems;
\item black non-root vertices carry $m-2$ opening stems and either a white
  child or a closing stem.
\end{itemize}

$T$ is not exactly a \emph{$m$-Eulerian tree} as defined in
\cite{MBMSch00}, but the transition between the two families is easy.
It is enough to modify the tree locally in a way such that black and
white vertices respectively carry only opening and closing stems. To
do so, observe that opening stems carried by white vertices are
matched with closing stems carried by black ones (since the underlying
map is bipartite). Now suppose that such a couple is carried by a
white vertex $u$ and a black vertex $v$. It can be replaced by its
closure edge $(u,v)$, creating a cycle that is broken by opening the
edge connecting $v$ to its father in $T$ so as to create a new couple
of opening and closing stems (see
Fig.~\ref{fig:const_swap}-\subref{fig:const_tree}).  This \emph{swap}
leads to a blossoming tree with only opening stems on black vertices
and closing stems on white vertices; removing the black root vertex
gives precisely an $m$-Eulerian tree. As shown in~\cite{MBMSch00}:
\begin{prop}
  Let $m\geq 2$, the number of edge-rooted $m$-Eulerian maps with
  $d_i$ white vertices of degree $mi$ for each $i\geq 1$ is:
\[
m(m-1)^{v_\circ-1}\frac{[(m-1)v_\bullet]!}{[(m-1)v_\bullet-v_\circ+2]!}\prod_{i\geq
1}\frac{1}{d_i!}\binom{mi-1}{i-1}^{d_i},
\]
where $v_\bullet=\sum id_i$ and $v_\circ=\sum d_i$ denote respectively the
number of black and white vertices. 
\end{prop}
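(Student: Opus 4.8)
The plan is to reduce the count to a tree enumeration and then apply the rerooting strategy already used above for Eulerian maps. By Corollary~\ref{cor:blossom} together with the swap described above, edge-rooted $m$-Eulerian maps with $d_i$ white vertices of degree $mi$ are in bijection with $m$-Eulerian trees carrying the same white-degree distribution, so it suffices to count these trees. Since the swap is a bijection, I would count instead the blossoming bipartite trees $T$ produced directly by the opening, in which each black vertex has degree $m$ (a parent edge together with $m-2$ opening stems and one white child or closing stem, except for the black root, which has one white child and $m-1$ opening stems) and each white vertex of degree $mi$ carries $i-1$ opening stems and $i(m-1)$ black children or closing stems.

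To enumerate these trees I would follow the scheme sketched for Eulerian maps and detailed in \cite{Schaeffer97,MBMSch00}: embed the balanced trees into a larger family of planted trees that is stable under rerooting at a boundary stem, enumerate this family in closed form, and recover the balanced ones as a fixed proportion through a cycle-lemma argument. The purely local decorations yield the factors I am most confident about: around a white vertex of degree $mi$ there are $\binom{mi-1}{i-1}$ ways to place its $i-1$ opening stems among the $mi-1$ non-parent slots, giving $\prod_i \binom{mi-1}{i-1}^{d_i}$, while $\prod_i 1/d_i!$ discounts the relabelling of the $d_i$ white vertices of equal degree. Setting $s=(m-1)v_\bullet-v_\circ+1$, one checks from the edge count — there are $m v_\bullet$ edges, of which $v_\bullet+v_\circ-1$ are tree edges — that $T$ has exactly $s$ closure edges, hence $s$ opening and $s$ closing stems, so that $s+1=(m-1)v_\bullet-v_\circ+2$ is the denominator appearing in the statement.

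The remaining factor $m(m-1)^{v_\circ-1}\,[(m-1)v_\bullet]!/[(m-1)v_\bullet-v_\circ+2]!$ is where the two global ingredients combine: the enumeration of the bicoloured planted skeleton (all black vertices of degree $m$, with the prescribed white degrees) and the proportion of rootings that are balanced. I would obtain it by counting the skeletons through a closed tree-enumeration formula and then multiplying by the cycle-lemma proportion, exactly as the factor $2/(\ell+1)$ arose in the Eulerian case. As a sanity check, the case $d_1=1$ (one white and one black vertex, each of degree $m$) gives $m\cdot (m-1)!/m!=1$, as it must, and specialising to $m=2$ recovers, up to the change of rooting convention, the Eulerian count, with $\binom{2i-1}{i-1}=\binom{2i-1}{i}$ and the leading $2$ matching.

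The step I expect to be the main obstacle is precisely this global bookkeeping: one must exhibit a genuinely rerooting-stable superfamily and show that each map is recovered with the correct multiplicity, so as to pin down the exact constants $m$ and $(m-1)^{v_\circ-1}$ and the factorial ratio rather than merely their order of magnitude. As the authors already note for general maps, such families of decorated trees need not be stable under rerooting, and the uniform degree-$m$ black vertices must be handled with care to avoid over- or under-counting; this delicate computation is carried out in Section~3 of \cite{MBMSch00}, which I would invoke for the detailed verification, our contribution being the short bijective derivation of the trees themselves.
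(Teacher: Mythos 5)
Your proposal matches the paper's treatment: the paper likewise reduces the statement to the enumeration of $m$-Eulerian trees via the generic opening followed by the swap, and then simply cites \cite{MBMSch00} ("As shown in \cite{MBMSch00}") for the tree count, exactly as you do. Your additional bookkeeping (the local binomial factors, the count of $(m-1)v_\bullet-v_\circ+1$ closure edges, the sanity checks) is consistent and in fact more detailed than what the paper provides, so deferring the rerooting computation to Section~3 of \cite{MBMSch00} is precisely the intended argument.
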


\subsection{Non-separable maps with prescribed number of edges}\label{sub:nonsep}

A \emph{cut vertex} of a map is a vertex that is incident twice to the
same face; a map is said to be \emph{non-separable} if it has no cut
vertex. Observe that this definition is stable by duality. It includes
the two maps with only one edge, but in the following, we will
consider only maps with at least two edges. In \cite{Sch98},
bijections are described for general and cubic non-separable planar
maps, and we show here that they are indeed special cases of our
generic bijective scheme.

\medskip

A \emph{bipolar orientation} of a map (or more generally a graph) is
an acyclic orientation of its edges with a single \emph{source}
(vertex without any incoming edge) and a single \emph{sink} (vertex
without any outgoing edge), which are called the \emph{poles} of the
orientation.  Non-separable maps (or graphs) are characterized by the
following property (see \eg\cite{FraPOMRos95}):

\begin{prop}\label{prop:nonsepbip}
  A rooted map is non-separable if and only if it can be endowed with
  a bipolar orientation with the two ends of the root edge as poles.
\end{prop}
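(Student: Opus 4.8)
The plan is to prove Proposition~\ref{prop:nonsepbip} as an equivalence, establishing each implication separately. The statement relates non-separability (absence of cut vertices) to the existence of a bipolar orientation whose source and sink are the two endpoints of the root edge. This is a classical characterization, so the strategy is to recall the standard arguments rather than to invent something new.

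First I would prove the easy direction: if a rooted map admits a bipolar orientation with the root-edge endpoints $s$ (source) and $t$ (sink) as poles, then the map is non-separable. The key observation is that a bipolar orientation is by definition acyclic with a unique source and a unique sink. Suppose for contradiction that some vertex $v$ is a cut vertex, i.e.\ incident twice to the same face, so that removing $v$ disconnects the map into at least two pieces. One then argues that in any acyclic orientation with a single source and single sink, every vertex $v \neq s,t$ must have both an incoming and an outgoing edge, and moreover every vertex lies on some directed path from $s$ to $t$; this is a standard consequence of acyclicity together with the uniqueness of the poles. A cut vertex would force the source-to-sink flow to pass through $v$ in a way incompatible with the local structure around the cut, contradicting either acyclicity or the uniqueness of the poles. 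Making this precise is a matter of examining the two blocks hanging off the cut vertex and checking that each must contain its own source or sink.

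For the converse, the harder and more substantial direction, I would construct a bipolar orientation on a non-separable map rooted at an edge $(s,t)$. The standard approach is to add the edge $(t,s)$ (if not already present) and invoke the classical theory: a $2$-connected graph admits an $st$-numbering, i.e.\ a labeling of the vertices by $1,\dots,n$ with $s \mapsto 1$, $t \mapsto n$, such that every other vertex has both a lower-numbered and a higher-numbered neighbor. Orienting each edge from lower to higher label then produces an acyclic orientation whose unique source is $s$ and unique sink is $t$. The existence of an $st$-numbering is exactly equivalent to $2$-connectivity of the augmented graph, which is precisely what non-separability of the rooted map guarantees (non-separability being the planar-map analogue of $2$-connectedness, stable under duality as noted just before the statement). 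I expect the main obstacle to be bridging the gap between the combinatorial notion of ``non-separable map'' used here (no vertex incident twice to a face) and the graph-theoretic notion of $2$-connectivity on which the $st$-numbering theorem rests, and in particular handling the small or degenerate cases (loops, multiple edges, the excluded one-edge maps) cleanly.

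Since the proposition is cited from the literature (the paper points to~\cite{FraPOMRos95}), I anticipate the author's proof will be brief, most likely appealing directly to the known equivalence between non-separability and the existence of bipolar orientations rather than reconstructing the $st$-numbering argument from scratch. Accordingly, the cleanest writeup would state the easy implication explicitly and then cite the classical result of Lempel--Even--Cederbaum (or its map-theoretic reformulation) for the existence of the bipolar orientation, emphasizing that the two poles can be prescribed to be any pair of adjacent vertices—here the endpoints of the root edge.
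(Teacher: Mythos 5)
You predicted correctly: the paper offers no proof of Proposition~\ref{prop:nonsepbip} at all, stating it as a known characterization with the single citation to~\cite{FraPOMRos95}, so there is nothing in the text for your argument to diverge from. Your sketch is the standard and correct route --- the Lempel--Even--Cederbaum $st$-numbering for existence, and for the converse the observation that a component hanging off a cut vertex would force either an extra pole or a directed cycle through that vertex --- though if you were to write it out you would need to make the latter contradiction explicit (every vertex of such a component must both be reached from $s$ and reach $t$ through the cut vertex, yielding a circuit) rather than leaving it at ``incompatible with the local structure.''
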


\begin{figure}
  \centering
  \subfigure[Face configuration,\label{fig:prop_bipolar_face}]{\includegraphics[scale=1.2,page=1]{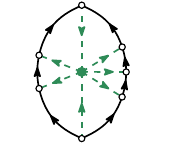}}\qquad
  \subfigure[vertex configuration,\label{fig:prop_bipolar_vertex}]{\includegraphics[scale=1.2,page=2]{nonseparable}}\qquad
  \subfigure[forbidden pattern in minimal bipolar orientations\label{fig:prop_bipolar_min}]{\quad\includegraphics[scale=1.2,page=3]{nonseparable}\quad}
  \caption{Properties of bipolar orientations; the non-separable map
    is drawn in plain lines, its quadrangulation in dashed green ones.}
  \label{fig:prop_bipolar}
\end{figure}

In the planar case, rooted non-separable maps endowed with a bipolar
orientation (with, say, the root vertex as the sink) have some
interesting properties, illustrated in Fig.~\ref{fig:prop_bipolar}:
\begin{enumerate}
\item each face is itself bipolar, hence its corners may be classified
  into \emph{lateral} ones (left or right) and two \emph{polar} ones
  (source and sink);
\item each vertex but the two poles has exactly one bundle of incoming
  edges and one bundle of outgoing ones, hence its corners may be
  classified into some polar ones (source or sink) and two lateral
  ones (left and right);
\item the quadrangulation $Q_M$ of the map can be
  endowed with an $\alpha$-orientation so that each extremity of the root edge has indegree 0 and every other vertex has 2 incoming edges (one for each special corner of the corresponding face or vertex of $M$, see Fig.\ref{fig:prop_bipolar_face} and~\ref{fig:prop_bipolar_vertex});
\item  reciprocally to each such orientation of $Q_M$ corresponds a bipolar orientation of $M$, hence 
  the set of bipolar orientations of a plane non-separable map $M$ is
  endowed with a lattice structure inherited from the lattice
  structure of the $\alpha$-orientations of its quadrangulation;
\item the minimal element of this lattice is the unique bipolar
  orientation of $M$ such that, for any distinct vertices $u$ and $v$
  both incident to distinct faces $f$ and $g$, the following
  configuration is forbidden: $u$ sink of $f$ and on the left of $g$,
  $v$ on the right of $f$ and source of $g$, see
  Fig.~\ref{fig:prop_bipolar_min}. This pattern corresponds to a
  counterclockwise 4-cycle in the quadrangulation.
\end{enumerate}

The constructions for general and cubic non-separable planar maps are
based on this minimal bipolar orientation. In the following, we denote
by $s$ and $t$ respectively the source and the sink of the considered
bipolar orientations, and say that a face is \emph{generic} if it is
\emph{not} incident to the root edge. We use the usual convention that
bipolar oriented maps are drawn in the plane with oriented paths going
upwards. Moreover $s$ and $t$ are outer vertices, $s$ is at the bottom and
$t$ at the top of the figure, and with this convention we choose the
embedding in which the root edge is the rightmost one.

\subsubsection{Non-separable cubic maps}

The case of cubic maps (treated in \cite{Sch98}, extended in
\cite{PouSch03}) is very constrained: since each vertex has degree 3,
in any bipolar orientation, each non-polar vertex has either one
incoming edge and two outgoing ones, or the opposite. Hence each
vertex (including the two poles) is either the source or the sink of exactly
one generic face. Let $M$ be such a rooted non-separable cubic map
endowed with its minimal bipolar orientation, and let us add an extra
\emph{bipolar edge} in each generic face of $M$ (that is, an edge
between its two poles). These extra edges realize a perfect matching
of the vertices, hence the resulting planar map $\overline M$ is
4-regular. As such, $\overline M$ can be endowed with its minimal
Eulerian orientation, and opened accordingly into a blossoming tree
rooted at the sink~$t$. We say that $\overline M$ is the
\emph{completion} of $M$.

\begin{lem}\label{lem:bipolar_edges}
  All bipolar edges belong to the resulting blossoming tree $\BB M$,
  and each vertex carries exactly one opening stem, immediately before
  the bipolar edge in clockwise order.
\end{lem}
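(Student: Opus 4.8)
The plan is to reduce the lemma to a direct application of the opening algorithm of Proposition~\ref{prop:bernardi} run from the root vertex $t$, after first pinning down the minimal Eulerian orientation of the completion $\overline M$ in terms of the minimal bipolar orientation of $M$. Since $\overline M$ is $4$-regular, its minimal Eulerian orientation is a genuine (saturated) orientation with $\out(v)=\indeg(v)=2$ at every vertex, so the whole argument is local: I describe the cyclic arrangement of the four edges around each vertex, together with the status (tree or closure) assigned to each by the contour process.

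The first step is to show that in the minimal Eulerian orientation \emph{every bipolar edge $b_f$ is oriented from the source $s_f$ of its face $f$ to its sink $t_f$}. The recursive ``peel the clockwise outer cycle'' description of the minimal Eulerian orientation already forces the root edge to be oriented from $t$ to $s$ and the outer boundary path to point towards $t$. For an interior bipolar edge the direction is fixed by minimality: the two monotone boundary paths of the bipolar face $f$ run from $s_f$ up to $t_f$, so an edge $b_f$ oriented $t_f\to s_f$ would complete a counterclockwise cycle with one of them, which is excluded since the orientation has no counterclockwise cycle. This is exactly the content of the forbidden pattern of Fig.~\ref{fig:prop_bipolar_min}, transported from $M$ to $\overline M$. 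With the direction of $b_f$ fixed, I read off the local picture at a vertex $v$: when $v=s_f$ its two outgoing edges are $b_f$ together with one genuine edge of $M$, while when $v=t_f$ the edge $b_f$ is incoming and the two outgoing edges are genuine edges of $M$.

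I then run the opening from $t$. Using Theorem~\ref{thm:open}, a closure edge must be a \emph{clockwise}-saturated edge of the fundamental cycle it forms with the tree; since $b_f$ is oriented $s_f\to t_f$ inside the single face $f$ it subdivides, it cannot play this clockwise role, so it is forced into $\TT{\overline M}$. This proves that all bipolar edges are tree edges, realized as the edge joining the child end $s_f$ to its parent $t_f$ on the path towards the root sink $t$. For the stem count, each non-root vertex has $\out(v)=2$ split into exactly one parent tree edge and exactly one outgoing closure edge, hence carries exactly one opening stem; and a planarity check shows this stem is the last outgoing edge explored around $v$ before the contour returns along $b_f$, \ie the edge sitting immediately before $b_f$ in clockwise order. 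The root $t$ is the sole exception: having no parent, both of its outgoing edges are closure edges, so besides the stem lying just before $b_f$ it also carries the stem coming from the (reversed) root edge, in agreement with the balanced rooting convention of Section~\ref{sec:PrevBij}.

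The main obstacle is the first step, namely rigorously establishing the minimal Eulerian orientation of $\overline M$ and, in particular, excluding that any bipolar edge is a clockwise closure edge: this is precisely where minimality is indispensable and must be matched carefully with the definition of a \TCP. Once the orientation and the local ``stem-just-before-$b_f$'' picture are secured, both assertions follow mechanically from the contour process, as one can already verify by hand on the theta map (three parallel edges), whose completion is the four-parallel-edge map: there $b$ is oriented $s\to t$ and becomes the unique tree edge, $s$ carries one opening stem immediately before $b$, and $t$ carries that same stem plus the root-edge stem.
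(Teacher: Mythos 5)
Your overall strategy (pin down the minimal Eulerian orientation of $\overline M$ explicitly, then read everything off the contour process of Proposition~\ref{prop:bernardi} run from $t$) is genuinely different from the paper's, which proceeds by induction on the number of non-polar vertices, decomposing $M$ into smaller non-separable cubic maps $M'$, $M''$ (and possibly $M'''$) and checking that both the minimal bipolar orientation of $M$ and the minimal Eulerian orientation of $\overline M$ restrict compatibly to these pieces. Unfortunately, the two steps on which your route hinges are exactly the ones you leave unproved, and the justifications you sketch for them do not hold up.

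First, to orient the bipolar edges you argue that $b_f$ directed $t_f\to s_f$ would close a counterclockwise cycle with a monotone boundary path of $f$. But those boundary paths run from $s_f$ to $t_f$ in the \emph{bipolar} orientation of $M$, not in the minimal \emph{Eulerian} orientation of $\overline M$, which is a different orientation (compare Fig.~\ref{fig:nonsep_cubic}(a) and (c): many edges of $M$ are reversed). ``Transporting'' the forbidden pattern of Fig.~\ref{fig:prop_bipolar_min} from one orientation to the other is precisely the non-trivial content that the paper's induction establishes; it cannot be invoked for free. Second, to place $b_f$ in the tree you claim it cannot be a clockwise closure edge because it is oriented $s_f\to t_f$ ``inside the single face $f$ it subdivides.'' The clockwise condition of Theorem~\ref{thm:open} is relative to the fundamental cycle that $b_f$ forms with the spanning tree, not to the boundary of $f$; whether $b_f$ is clockwise in that cycle depends on which side of $f$ the tree path from $t_f$ back to $s_f$ passes, a global question your local picture does not settle. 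Finally, the ``planarity check'' placing the opening stem immediately before $b_f$ in clockwise order is exactly where the paper uses minimality of the bipolar orientation (the forbidden-pattern argument showing that the face on the right of the sink $u$ of $f$ is the non-generic face $g$, whence the edge following the bipolar edge counterclockwise around $u$ is outgoing and belongs to the closure); you do not supply a substitute. The part of your argument that does go through --- each non-root vertex has out-degree $2$ with exactly one outgoing tree edge to its parent, hence exactly one opening stem --- is correct, but it is the easy half of the statement.
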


\begin{proof}
  This is proved inductively on the number of non-polar vertices of
  $M$. The lemma is true in the smallest case (three parallel edges
  between $s$ and $t$). Let now $M$ have at least two inner vertices.
  Let $f$ be the (only) generic face incident to $s$, $g$ the bounded
  non-generic face, and $u$ the sink of~$f$. The vertex $u$ may be
  incident to the outer face (and even be equal to $t$); this specific
  case is illustrated in Fig.~\ref{fig:specific_cubic}, and the
  generic case is illustrated in Fig.~\ref{fig:generic_cubic}.  We
  show that the bipolar edge $(s,u)$ satisfies the lemma, and in each
  possible case we build smaller non-separable cubic maps such that
  $\BB{M}$ is obtained from their respective blossoming trees by
  grafting them together with the small subtree made of $s$, $u$ and
  their incident edges and stems.

\begin{figure}
  \centering
  \subfigure[The completion of $M$ and its Eulerian orientation,\label{fig:generic_cubic_a}]{\quad%
    \includegraphics[scale=1., page=1]{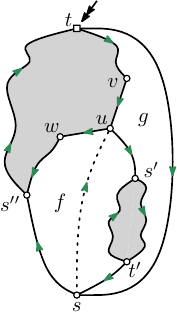}\quad}
  \subfigure[the tree-and-closure partition,]{\quad%
    \includegraphics[scale=1., page=2]{generique_cubique_nonseparable}\quad}
  \subfigure[the shape of the blossoming tree,]{\quad%
    \includegraphics[scale=1., page=3]{generique_cubique_nonseparable}}
  \subfigure[$\overline M'$ and~$\overline M''$.]{\qquad%
    \includegraphics[scale=1., page=4]{generique_cubique_nonseparable}\quad}
  \caption{Generic case of non-separable cubic maps.}
  \label{fig:generic_cubic}
\end{figure}

\medskip
\noindent\emph{Let us first prove that the bipolar edge $(s,u)$ satisfies the lemma.}
\smallskip

Properties of the minimal Eulerian orientation of
$\overline M$ imply that the border of the outer face is a clockwise
cycle, and that the edges incident to $g$ are oriented clockwise or
counterclockwise around $g$ depending on whether they are incident to
the outer face or not.  Observe also that the two outer edges incident
to $s$ are oriented clockwise and since they are unnecessary to the
accessibility of~$t$, they belong to the closure.  Therefore the
bipolar edge $(s,u)$, oriented from $s$ to $u$, belongs to the tree.


Let us now prove by contradiction that the face on the right of $u$ is
the non-generic bounded face~$g$, that is, its source and sink are $s$
and~$t$.  Otherwise, let us define sequences of faces $(f_{i})$ and
$(g_{i})$ and of vertices $(s_{i})$, $(u_{i})$ and $(v_{i})$ as
follows. Let $f_{0}$ be equal to $f$, and for any $i\geq 0$, let
$s_{i}$ and $u_{i}$ be respectively the source and the sink of
$f_{i}$, $g_{i}$~the face on the right of $u_{i}$, $v_{i}$ its source,
and finally $f_{i+1}$ the face on the left of $v_{i}$. By assumption,
$g_{0}$ is different from $g$ (and clearly also from $f$), hence
$v_{0}$ is not equal to $s$, and $f_{1}$ is well defined. More
generally, it can be checked that $v_{i}$ cannot be equal to $s$ for
any $i\geq 1$, hence the whole sequence $(f_{i})$ is well defined.
  
Let \(\lambda_i\) be the leftmost path from \(s\) to \(s_i\), and
\(\rho_i\) be the rightmost path from \(s\) to \(v_i\). Denote now by
$R_{i}$ the region delimited by \(\lambda_i\) and the left border of
\(f_i\) on the left, and \(\rho_i\) and the left border of \(g_i\) on
the right. Then $u_{i+1}$ belongs to $R_{i}$, hence $(u_{i})$ is
decreasing for the partial order induced by the acyclic
orientation. It is therefore stationary after some~\(i_0\).  By
definition, $u_{i_0}$, $v_{i_0}$, $f_{i_0}$ and $g_{i_0}$ define
exactly the forbidden pattern~\ref{fig:prop_bipolar_min}, hence the
underlying bipolar orientation is not minimal, a contradiction.


Since the face on the right of $u$ is $g$, the edge following the
bipolar edge $(s,u)$ in counterclockwise order around $u$ is incident
to both $f$ and $g$. Hence it is necessarily outgoing, and belongs to
the closure, which proves the lemma locally for $s$ and~$u$.

\medskip
\noindent\emph{Let us decompose $M$ into smaller non-separable cubic maps.}
\smallskip

\begin{figure}
  \centering
  \subfigure[The completion of $M$ and its Eulerian orientation,\label{fig:specific_cubic_a}]{\quad%
    \includegraphics[scale=1., page=1]{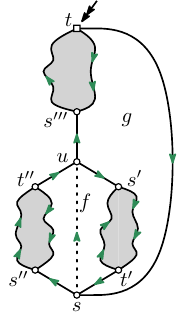}\quad}
  \subfigure[the tree-and-closure partition,]{\quad%
    \includegraphics[scale=1., page=2]{serie_cubique_nonseparable}\quad}
  \subfigure[the shape of the blossoming tree,]{\quad%
    \includegraphics[scale=1., page=3]{serie_cubique_nonseparable}}
  \subfigure[$\overline M'$, $\overline M''$ and~$\overline M'''$.]{\qquad%
    \includegraphics[scale=1., page=4]{serie_cubique_nonseparable}\quad}
  \caption{Specific \emph{series parallel} case for non-separable cubic
    maps.}
  \label{fig:specific_cubic}
\end{figure}

Now let $s'$ and $t'$ be the respective neighbours of $u$ and $s$ that
are both incident to $f$ and $g$ -- possibly equal to $s$ and $u$, see
Fig. \ref{fig:generic_cubic}-\ref{fig:specific_cubic}(a). If $s'\neq
s$ and $t'\neq u$, deleting $(u, s')$ and $(t',s)$ disconnects $M$
into two submaps; as these edges are both oriented clockwise around
$f$, $(u,s')$ belongs to the closure and $(t', s)$ belongs to the
tree, see
Fig. \ref{fig:generic_cubic}-\ref{fig:specific_cubic}(b). Adding a
root edge between $s'$ and $t'$ with root vertex $t'$ in the
corresponding submap leads to a smaller non-separable cubic map~$M'$,
see Fig. \ref{fig:generic_cubic}-\ref{fig:specific_cubic}(d). Observe
that the restriction to $M'$ of the minimal bipolar orientation of $M$
is precisely the opposite of the minimal bipolar orientation of
$M'$. Hence these two orientations define the same matching between
the sources and the sinks of generic faces. Thus the completion of
$M'$ is a submap of $\overline M$ (except for its root edge), and its
minimal Eulerian orientation is the restriction of the one of
$\overline M$.

We resume to $s$ and $u$ being possibly equal to $s'$ and $t'$.
To end the construction in the generic case, let $s''$ be the last
neighbour of $s$, and let $v$ and $w$ be the two last neighbours of
$u$, with $v$ incident to $g$ and $w$ incident to $f$, see
Fig.~\ref{fig:generic_cubic}. Then $(u,w)$ is a tree edge, while
$(s,s'')$ and $(v,u)$ are closure edges. Let $M''$ be obtained from
$M$ by removing $s$ and $u$ and their incident edges, and adding to
the component of $t$ and $s''$ successively in the outer face a
(closure) edge between $v$ and $w$, and a (closure) root edge between
$t$ and $s''$. Then $M''$ is a smaller non-separable cubic map, whose
minimal bipolar orientation is the restriction of the minimal bipolar
orientation of $M$.  Hence $\BB{M}$ is obtained from $\BB{M''}$ by
grafting the subtree made from $u$, $s$ and $\BB{M'}$ instead of the
suitable stem of~$w$.

Now, in the case where $u$ is incident to the outer face, the
situation between the face $f$ and the outer face is similar to the
one between the faces $f$ and $g$, as illustrated in
Fig.~\ref{fig:specific_cubic}. Let $s''$ and $t''$ be the neighbours
of $s$ and $u$ between these two faces, then $(s,s'')$ belongs to the
closure and, as soon as the two edges are distinct, $(t'',t)$ belongs
to the tree.  Adding a root edge between $s''$ and $t''$ in the
corresponding submap leads to a smaller non-separable cubic map~$M''$.

If moreover $u\neq t$, let $s'''$ be its fourth neighbour; the edge
$(u,s''')$ (oriented towards $s'''$) is incident to the two
non-generic faces, hence deleting the root edge and $(u,s''')$
disconnects $M$, which implies in particular that $(u,s''')$ belongs
to the tree. In this case, let $M'''$ denote the submap containing
$s'''$ and $t$, with an additional root edge between $s'''$ and~$t$:
$M$ is somehow a \emph{series parallel} compound of three submaps
$M'$, $M''$ and $M'''$, each possibly empty in degenerate cases, see
Fig.~\ref{fig:specific_cubic}.
\end{proof}


\begin{figure}
  \centering
  \subfigure[The minimal bipolar
  orientation of~$M$,]{\;\includegraphics[scale=1, page=1]{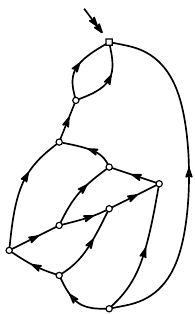}\quad}\quad
  \subfigure[$\overline M$ with the extra bipolar edges,]{\includegraphics[scale=1, page=2]{exemple_cubique_nonseparable}}\qquad
  \subfigure[its minimal Eulerian orientation,]{\includegraphics[scale=1, page=3]{exemple_cubique_nonseparable}}\qquad
  \subfigure[and the resulting blossoming tree.]{\includegraphics[scale=1, page=5]{exemple_cubique_nonseparable}}
  \caption{Complete example of a non-separable cubic map $M$.}
  \label{fig:nonsep_cubic}
\end{figure}

Following \cite{Sch98, PouSch03}, let us define a blossoming
\emph{twin ternary tree} as a tree obtained by fairly splitting each
node of a ternary tree into two \emph{twin} nodes linked by a special
\emph{twinning edge}, with an additional opening stem on each node
right before its twinning edge in clockwise order (see Fig.\ref{fig:nonsep_cubic}(d)).  

Observe that the opening of the completion of a non-separable cubic
map leads to a (balanced) blossoming twin ternary tree, where twinning edges are bipolar edges. Indeed, non-root opening stems always follow bipolar
edges in counterclockwise order around vertices and the contraction of bipolar edges yields a planted ternary tree (the leaves of which are the closing stems). 

Conversely, we prove now that the closure of any balanced blossoming
twin ternary tree is the completion of a non-separable cubic map.  For
any subtree (a node, its stems and its descendants), let us call
\emph{free} the stems that are matched with stems not belonging to the
subtree.  An immediate counting shows that each subtree has one more
closing stem than opening ones, hence one more free closing stem than
free opening ones.  Let us denote by $s$ the node carrying the closing
stem $c$ corresponding to the (opening) root stem, and let $u$ be its
twin node; let us a prove by contradiction that $u$ is its parent. If $u$ were the
right child of $s$, its opening stem would be matched with the (only)
closing stem of $s$. If $u$ were its left child, the subtree of $s$
would have at least two free closing stems since the opening stem of
$u$ would indeed be free, which prevents $c$ from being matched with
the root stem. Hence $u$ is necessarily the parent of $s$, and as the
opening stem of $u$ is matched to a closing stem in the right subtree
of $s$, $c$ is necessarily just before the opening stem of $s$ in
clockwise order around~$s$. Hence $s$ and $u$ are exactly in the
configuration of Fig.\ref{fig:generic_cubic_a} or Fig.\ref{fig:specific_cubic_a}.
We can conclude by a similar induction argument as in the proof of Lemma~\ref{lem:bipolar_edges}.

Hence rooted non-separable cubic planar maps with $2n$ vertices are in
one-to-one correspondence with balanced blossoming twin ternary trees
with $2n$ nodes. The number of planted ternary trees with $n$ nodes is
$\frac1{2n+1}\binom{3n}{n}$, each one leading to $2^n$ distinct
planted blossoming twin ternary trees. A fraction $\frac2{2n+2}$ of
them is balanced, leading to:

\begin{cor}
  The number of rooted planar non-separable cubic maps with $2n$
  vertices and $3n$ edges is equal to:
\[
\frac{2^{n}}{(n+1)(2n+1)} \binom{3n}{n}.
\]
\end{cor}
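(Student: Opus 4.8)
The plan is to enumerate the objects that the preceding discussion has shown to be equinumerous with our maps, namely balanced blossoming twin ternary trees with $2n$ nodes, and to pull this count back to the Fuss--Catalan enumeration of ternary trees. Since the bijection established above (building on Lemma~\ref{lem:bipolar_edges}) identifies rooted planar non separable cubic maps with $2n$ vertices with balanced blossoming twin ternary trees with $2n$ nodes, it suffices to count the latter.

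First I would enumerate the underlying planted ternary trees. A planted ternary tree with $n$ nodes is counted by the classical Fuss--Catalan number $\tfrac{1}{2n+1}\binom{3n}{n}$. From each such tree one manufactures planted \emph{blossoming twin} ternary trees by fairly splitting every node into a twin pair joined by a twinning edge and inserting the prescribed opening stem just before it; this amounts to an independent binary choice at each of the $n$ nodes, so the number of planted blossoming twin ternary trees is $\tfrac{2^n}{2n+1}\binom{3n}{n}$.

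The heart of the argument is to extract the proportion of these trees that are \emph{balanced}, and for this I would reuse the rerooting strategy already employed for Eulerian maps. Reading the root leaf of the planted tree as a closing stem, a planted blossoming twin ternary tree carries $2n+2$ closing stems against only $2n$ opening stems, so its closure is a map with exactly two unmatched closing stems. The family of planted blossoming twin ternary trees is stable under sliding the root around the cyclic contour, and all the rootings that produce the same closed map fall into orbits on which a cycle-lemma argument controls the balanced members: precisely the two rootings landing on one of the two unmatched closing stems, once that stem is flipped into an opening one, yield a Dyck contour word and hence a balanced tree (giving $2n+1$ opening and $2n+1$ closing stems, as on the map side). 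Among the $2n+2$ closing stems this singles out a fraction $\tfrac{2}{2n+2}=\tfrac{1}{n+1}$.

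Putting the three counts together gives
\[
\frac{2^n}{2n+1}\binom{3n}{n}\cdot\frac{2}{2n+2}
=\frac{2^{n}}{(n+1)(2n+1)}\binom{3n}{n},
\]
as claimed. The one genuinely delicate step is the balanced-fraction computation: it requires the correct stem bookkeeping (the excess of exactly two unmatched closing stems) together with a clean cycle-lemma argument pinning the proportion to $\tfrac{2}{2n+2}$, exactly as in the Eulerian case treated earlier.
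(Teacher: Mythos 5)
Your proposal is correct and follows essentially the same route as the paper: count planted ternary trees by the Fuss--Catalan number $\tfrac{1}{2n+1}\binom{3n}{n}$, multiply by $2^n$ for the twin-splitting choices, and extract the balanced fraction $\tfrac{2}{2n+2}$ by the rerooting/unmatched-stem argument already set up for Eulerian maps. Your stem bookkeeping ($2n+2$ closing versus $2n$ opening stems, hence two unmatched closing stems after closure) matches the paper's intended computation exactly.
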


\subsubsection{General non-separable maps}

In the case of general planar non-separable maps, the generic scheme
is applied on (an extension of) their radial maps, as in the proof of
Corollary~\ref{cor:generalmaps}. Let $M$ be a rooted non-separable
map, and $\RR M$ its radial map, rooted with the same root face as
$M$, and root vertex $r$ corresponding to the root edge of $M$.  The
orientation of the quadrangulation of $M$ defined below
Proposition~\ref{prop:nonsepbip} yields naturally an orientation of
$\RR M$ such that each generic face of $\RR M$ has exactly two
clockwise edges (corresponding to the two special corners of the
corresponding face or vertex of $M$). For edges incident to $r$, we
adopt the convention that the two outer ones are oriented clockwise,
and the last two ones are outgoing for the root vertex. This
orientation is said \emph{minimal} if the corresponding bipolar
orientation of $M$ is itself minimal, see
Fig.~\ref{fig:nonsep_minimale_radiale}.

  \begin{figure}
    \centering
    \subfigure[The minimal orientations of $M$ and $\RR M$,\label{fig:nonsep_minimale_radiale}]{\includegraphics[scale=0.85, page=2]{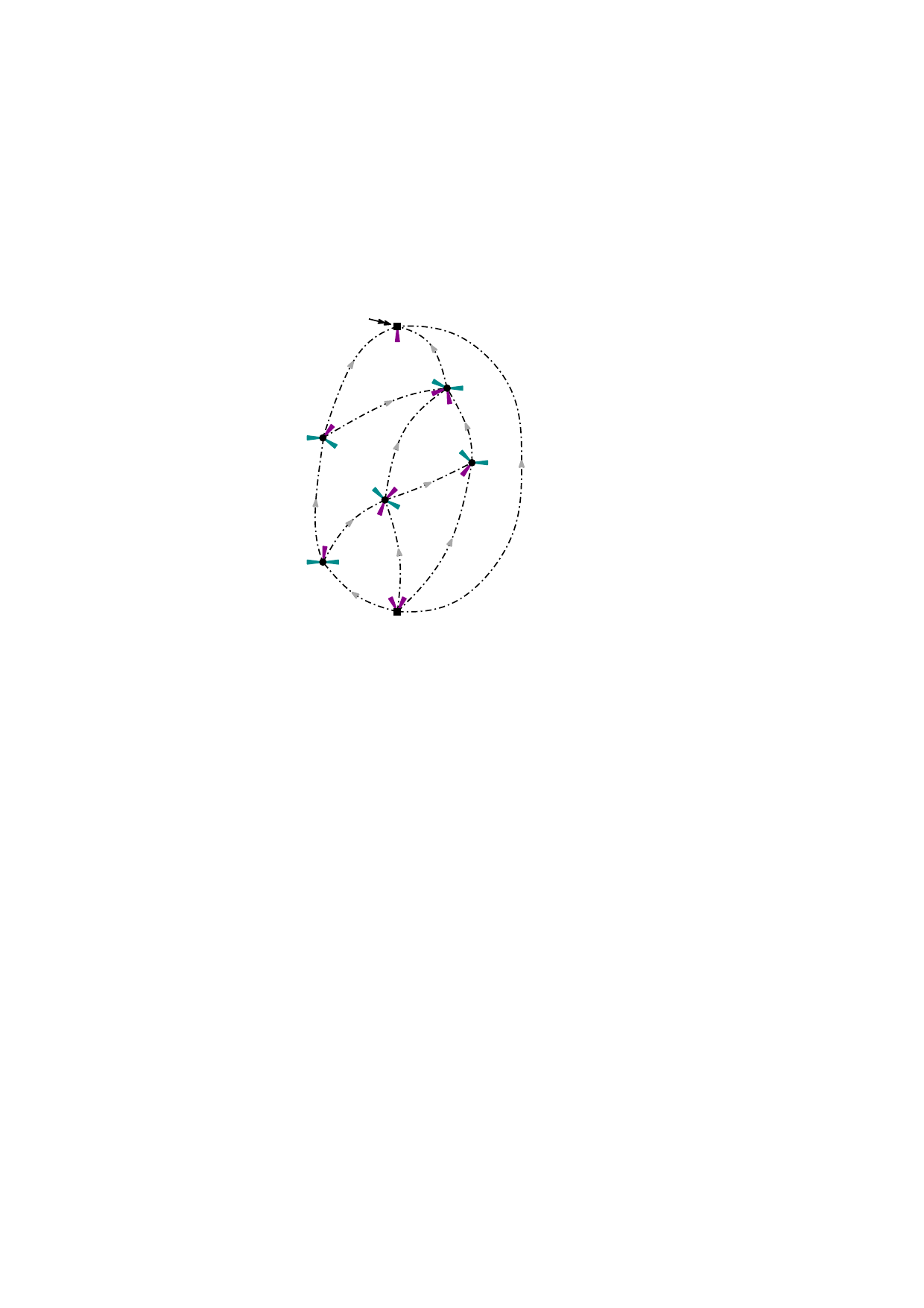}}\qquad
    \subfigure[the augmented map $\overline{\RR M}$,\label{fig:non_sep_extra}]{\includegraphics[scale=.85, page=4]{exemple_nonseparable}}\qquad
    \subfigure[and the blossoming tree.\label{fig:non_sep_arbre}]{\includegraphics[scale=.85, page=5]{exemple_nonseparable}\!\!\!\!}
    \caption{Complete example of a non-separable planar map $M$.}
    \label{fig:nonsep_exemple}
  \end{figure}

Given such an orientation of $\RR M$, each generic face has two special corners
(the origins of the two clockwise edges), and so does the face that
corresponds to $t$. Let us add an extra edge in each such face between
these two corners, and denote respectively $\overline{\RR M}$ the resulting
map and $\TT M$ the map made exactly of these extra edges and their
incident vertices, see Fig.~\ref{fig:non_sep_extra}. Then:

\begin{lem}[\cite{Sch98}]\label{lem:spanning}
  $\TT M$ is a spanning tree of $\overline{\RR M}$ if and only if the underlying
  orientation of $\RR M$ is minimal. Moreover in this case, the edges of $\RR M$ go clockwise around $\TT M$. 
\end{lem}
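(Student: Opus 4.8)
The plan is to fix the number of extra edges by Euler's formula, reduce the spanning-tree property to acyclicity, and then match the cycles of $\TT M$ with the non-minimal ``defects'' of the orientation.

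\emph{Edge count.} The map $\overline{\RR M}$ has the same vertices as $\RR M$, namely the edges of $M$; call their number $E$. An extra edge is added in each face of $\RR M$ carrying two clockwise edges, that is, in every face except three: the one associated with the source $s$ and the two associated with the faces of $M$ incident to the root edge (the $t$-face being included, as stressed in the construction). Since the faces of $\RR M$ are in bijection with the vertices and faces of $M$, there are $V+F$ of them, and Euler's relation $V-E+F=2$ gives exactly $(V+F)-3=E-1$ extra edges. A subgraph of $\overline{\RR M}$ with $E-1$ edges on $E$ vertices is a spanning tree if and only if it is acyclic: an acyclic graph with $E-1$ edges on $E$ vertices automatically spans all vertices and is connected. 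It therefore suffices to prove that $\TT M$ is acyclic if and only if the orientation of $\RR M$ is minimal.

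\emph{From a defect to a cycle.} By property~(5) and the duality $\RR M=(\QQ M)^\star$, the orientation fails to be minimal precisely when the forbidden pattern of Fig.~\ref{fig:prop_bipolar_min} occurs, i.e. when $\QQ M$ carries a counterclockwise $4$-cycle. Such a $4$-cycle is a face of $\QQ M$ and dualizes to a vertex $x$ of $\RR M$ all of whose four incident edges are incoming. Being the tail of no edge, $x$ is incident to no extra edge, hence is isolated in $\TT M$; then $\TT M$ has $E-1$ edges while missing a vertex, so it must contain a cycle. This settles the ``only if'' part: if $\TT M$ is acyclic, the orientation is minimal.

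\emph{From a cycle to a defect, and the clockwise property.} For the converse I argue by contraposition, building from a cycle $\kappa$ of $\TT M$ a counterclockwise cycle of the orientation. The cycle $\kappa$ bounds a disk $D$, and each of its edges joins the tails of the two clockwise edges of its face; to each edge of $\kappa$ I attach the clockwise edge of $\RR M$ lying on the $D$-side. I would check that these clockwise edges chain head-to-tail along $\kappa$ into a closed directed walk of $\RR M$ whose every edge is clockwise, hence a counterclockwise cycle; dualizing back this yields a counterclockwise cycle in $\QQ M$, contradicting minimality. The same local analysis gives the ``moreover'': when the orientation is minimal, $\TT M$ is a spanning tree, and each edge $e^\star$ of $\RR M$, being the clockwise edge of the generic face whose extra edge sits on the bounded side of the fundamental cycle of $e^\star$, has that bounded region on its right --- i.e. $e^\star$ goes clockwise around $\TT M$. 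This identifies $(\TT M,\RR M)$ as the \TCP of $\overline{\RR M}$ in the sense of Theorem~\ref{thm:open}.

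The main obstacle is the cycle-to-defect correspondence of the last paragraph: verifying, with the clockwise/counterclockwise conventions carried correctly through the duality $\RR M=(\QQ M)^\star$ and through the ad hoc orientation convention at the root vertex $r$ and the three exceptional faces, that the clockwise edges attached to a cycle of $\TT M$ genuinely close up into a counterclockwise cycle. The edge count and the reduction to acyclicity are routine; this local geometric matching is the heart of the argument.
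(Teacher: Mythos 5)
The paper does not actually prove this lemma --- it is imported wholesale from~\cite{Sch98} --- so there is no internal proof to compare against; I can only assess your argument on its own terms. Your frame is the right one and its first half is solid: the extra edges sit in the $V+F-4$ generic faces of $\RR M$ plus the $t$-face, Euler's formula gives $V+F-3=E-1$ of them on the $E$ vertices of $\RR M$, and an acyclic subgraph with $E-1$ edges among $E$ vertices is automatically a spanning tree. So the lemma does reduce to ``$\TT M$ acyclic $\Leftrightarrow$ the orientation is minimal''. Both directions of that equivalence, however, have genuine gaps.

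For ``non-minimal $\Rightarrow$ cycle'', you assert that the counterclockwise $4$-cycle of $\QQ M$ coming from the forbidden pattern ``is a face of $\QQ M$''. It need not be: the pattern only requires $u$ and $v$ to be incident to both $f$ and $g$, not adjacent in $M$, whereas the faces of $\QQ M$ are exactly the quadrangles arising from edges of $M$. A non-facial counterclockwise cycle does not dualize to a single vertex of $\RR M$, so the isolated-vertex argument collapses; repairing it requires an extra lemma (a counterclockwise cycle with inclusion-minimal interior encloses nothing, hence is facial, which itself uses accessibility), plus a check of the sign convention ensuring the dual vertex is a sink rather than a source of $\RR M$. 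For the converse, which you yourself flag as the main obstacle, the proposed mechanism fails as described: if consecutive extra edges $\epsilon_i\subset\phi_i$ and $\epsilon_{i+1}\subset\phi_{i+1}$ of a cycle $\kappa$ meet at a vertex $x$ of $\RR M$, then by construction $x$ is the \emph{tail} of a clockwise edge of $\phi_i$ and the \emph{tail} of a clockwise edge of $\phi_{i+1}$; the two associated radial edges are both outgoing at $x$, and the head of the edge you attach to $\epsilon_i$ is some vertex of $\partial\phi_i$ that need not lie on $\kappa$ at all, so these edges cannot ``chain head-to-tail'' into a directed closed walk. The cycle-to-defect correspondence has to be organized differently (for instance in $\QQ M$, by an indegree/edge count over the vertices enclosed by $\kappa$, or via the boundary of the union of the faces $\phi_i$ with the region they enclose). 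As it stands the proposal correctly identifies the target statement and the bookkeeping around it, but the equivalence that constitutes the actual content of the lemma is not established in either direction.
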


Hence in the minimal case, $\RR M$ is actually a valid set of closure
edges around $\TT M$, whatever accessible orientation is chosen for
edges in $\TT M$. For instance, we may orient all edges towards $r$,
or simply leave them unoriented (that is, oriented both ways in a
2-fractional orientation).  Then $(\TT M, \RR M)$ is the \TCP of
$\overline{\RR M}$.

The resulting (balanced) blossoming tree is such that each non-root
vertex is incident to 4 stems, and to as many edges as opening stems --
hence it has in-degree equal to 4. Considering closing stems as
leaves, and after some surgery to remove the root vertex, we get a
planted ternary tree with one extra opening stem at each corner before
an inner edge (clockwise around each vertex), see
Fig.~\ref{fig:non_sep_arbre}. Reciprocally, as shown in
\cite{Sch98}, the closure edges of such a balanced blossoming tree $T$ form a
4-regular map $R$ endowed with an orientation with 2 clockwise edges
per face,
which by Lemma~\ref{lem:spanning} ensures that $R$ is the radial map
of a non-separable map, endowed with its minimal orientation. Since any
planted blossoming ternary tree with \(n\) nodes has \(2n+2\) leaves (including
the root stem) and \(2n-2\) opening stems, a fraction \(\frac{4}{2n+2}\)
among them is balanced,  hence:

\begin{cor}
  For $n\geq 1$, the number of rooted planar non-separable maps with $n+1$
  edges is equal~to:
\[
\frac{2}{(n+1)(2n+1)} \binom{3n}{n}.
\]
\end{cor}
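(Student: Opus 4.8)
The plan is to combine the bijection just established with a generating-tree count, exactly as in the non separable cubic case treated above. By Lemma~\ref{lem:spanning} and Corollary~\ref{cor:blossom}, a rooted non separable map $M$ with $n+1$ edges is encoded, through its radial map $\RR M$ and the augmented map $\overline{\RR M}$, by its blossoming tree $\BB M$; once the closing stems are read as leaves and the surgery suppressing the root vertex is performed, $\BB M$ becomes a balanced planted ternary tree on $n$ nodes carrying exactly one opening stem in each corner preceding an inner edge (clockwise around each vertex). Since this decoration is canonical, $M\mapsto \BB M$ is a bijection between rooted non separable maps with $n+1$ edges and \emph{balanced} planted decorated ternary trees of this shape, so it suffices to enumerate the latter.

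First I would count the undecorated objects: the number of planted ternary trees with $n$ nodes is $\frac{1}{2n+1}\binom{3n}{n}$ (see \cite{HaPrTu}), and because the opening stems are added deterministically, each such tree yields a single planted decorated tree. To isolate the balanced ones I would reuse the rerooting argument already employed for Eulerian and non separable cubic maps: embed the planted decorated trees in the family obtained by rerooting at an arbitrary closing stem, which is stable under such rerooting, observe that all rerootings of a fixed free tree close to the same underlying map, and note that a rerooting is balanced precisely when its root is one of the unmatched closing stems left by the closure.

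The final step is the bookkeeping of unmatched stems. Here the two poles $s$ and $t$ of the underlying bipolar orientation (Proposition~\ref{prop:nonsepbip}) play the role of the distinguished vertices: every non-root vertex has indegree $4$ and contributes a locally balanced pattern, so that one expects the only unmatched closing stems to be the $4$ attached near the poles, out of the $2n+2$ closing stems of the tree (one per closure edge). The fraction of balanced rerootings is then $\frac{4}{2n+2}=\frac{2}{n+1}$, and multiplying gives
\[
\frac{1}{2n+1}\binom{3n}{n}\cdot\frac{2}{n+1} ~=~ \frac{2}{(2n+1)(n+1)}\binom{3n}{n} ~=~ \frac{4}{(2n+1)(2n+2)}\binom{3n}{n},
\]
as claimed (the case $n=1$, giving a single map, is a convenient sanity check).

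The step I expect to be the main obstacle is precisely this last bookkeeping. One must check that the decorated-ternary-tree family is genuinely stable under rerooting despite the canonical opening stems, and that the closure of a generic member leaves exactly the expected number of unmatched closing stems clustered at the poles. This is exactly where the present computation and the cubic one diverge: there the splitting of each node into twins produced the factor $2^n$ together with only $2$ unmatched stems, whereas here the decoration is rigid and the doubling must instead be absorbed into the count of valid (balanced) root positions.
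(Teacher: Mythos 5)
You follow the route the paper intends: the corollary is stated immediately after the bijection with balanced decorated planted ternary trees, and the paper leaves the enumeration of those trees implicit, to be supplied by the same conjugation-on-leaves argument it spells out for Eulerian maps and for the cubic case. Your arithmetic is the right one --- $\frac{1}{2n+1}\binom{3n}{n}$ planted ternary trees, a deterministic decoration, and a proportion $\frac{4}{2n+2}$ of admissible rootings --- and the deficit of four is at least consistent: reading all $2n+2$ leaves of the free tree as closing stems while the decoration places one opening stem at each of the two corners preceding each of the $n-1$ node--node edges gives $2n-2$ opening stems, hence four more closing than opening stems.

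The genuine gap is the one you flag yourself, and it is not merely bookkeeping. You assert that the four excess closing stems left unmatched by the closure are ``attached near the poles'' and that rerooting at any one of them produces a balanced tree; ``every non-root vertex contributes a locally balanced pattern'' does not establish this, since local balance says nothing about where the unmatched stems end up --- in the cubic case the analogous fact is exactly the content of Lemma~\ref{lem:bipolar_edges} and of the induction on twin ternary trees that follows it. Moreover the special vertex here is not a pole of $M$ but the root vertex $r$ of $\RR M$ (which corresponds to the root edge of $M$), and the orientation convention at $r$ (two outer edges oriented clockwise, two outgoing) means the four stems suppressed by the root surgery are not all of one type, so identifying which four leaves of the free tree are the admissible roots, and checking that distinct choices yield distinct rooted maps, genuinely requires an argument --- either an induction in the style of Lemma~\ref{lem:bipolar_edges}, or the reciprocal statement imported from \cite{Sch98} (that the closure of any such balanced tree is $\overline{\RR M}$ for a non separable $M$ endowed with its minimal orientation, via Lemma~\ref{lem:spanning}) combined with a count of the valid root positions on a fixed closed map. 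Until one of these is carried out, the factor $\frac{4}{2n+2}$ is a plausible guess matched to the known formula rather than a derived quantity.
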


\subsection{Simple triangulations and quadrangulations}
Bijections between simple triangulations or quadrangulations and
blossoming trees, as described in \cite{PouSch06, FusyThesis}, are
special cases of the general bijection for $d$-angulations of girth
$d$ as explained in Sections~\ref{sec:dangulations}
and~\ref{sec:opening}, with a special emphasis in
Subsection~\ref{sub:trigquad}. In particular, the uniqueness part in
Theorem~\ref{thm:open} gives a more direct proof that the closure
construction of~\cite{PouSch06,FusyThesis} for simple triangulations
and quadrangulations of a $p$-gon is injective, while the existence
part proves surjectivity without requiring a cardinality argument.

Besides, it is noteworthy that in such cases where the degree of faces
are prescribed, closing stems are redundant; since the underlying
orientation is regular, blossoming trees are trees with a fixed number
of opening stems per vertex.

\section{Plane bipolar orientations}\label{sec:bipolar}

Recall that a bipolar orientation of a map is an acyclic orientation
of its edges with a single source (vertex without incoming edge) and a
single sink (vertex without outgoing edge).  A \emph{plane bipolar
  orientation} is a (non-separable) corner-rooted map (with at least two edges) endowed with a bipolar
orientation such that the root vertex of the map is the sink of the
orientation and its source is the other extremity of the root edge,
see Fig.~\ref{fig:ex_bipolar}. We emphasize that this section is
devoted to the study of \emph{all} plane bipolar orientations, as
opposed to Section~\ref{sub:nonsep} which focuses only on maps endowed
with their minimal bipolar orientation as a tool to enumerate non-separable maps.
Plane bipolar orientations have a nice enumerative formula:

\begin{thm}[Baxter~\cite{Baxter01}]
  For all non-negative integers $i$ and $j$, the number $\Theta_{ij}$ of
  plane bipolar orientations with $i$ non-pole vertices and $j$
  generic faces is equal to:
  \begin{equation}
    \Theta_{ij} = \frac{2\;(i+j)!\;(i+j+1)!\;(i+j+2)!}{ i!\;(i + 1)!\; (i +
      2)!\ \ j!\;(j + 1)!\;(j + 2)!}.
    \label{eq:nb-bipolar}
  \end{equation}
\end{thm}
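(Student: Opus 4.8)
The plan is to prove this as a consequence of the bijective scheme developed in the paper, rather than reproducing Baxter's original generating-function argument. The target is the formula~\eref{eq:nb-bipolar} for $\Theta_{ij}$, and the natural strategy is to set up the canonical minimal orientation attached to plane bipolar orientations and then apply Corollary~\ref{cor:blossom} to open each such map into a blossoming tree. Concretely, I would first recall that a plane bipolar orientation of $M$ induces an $\alpha$-orientation on its quadrangulation $\QQ M$, exactly as described following Proposition~\ref{prop:nonsepbip}: each extremity of the root edge has indegree~$0$ and every other vertex has indegree~$2$ (one incoming edge per special corner). This orientation is accessible toward the appropriate root, and by Theorem~\ref{th:Felsner} there is a unique minimal representative in each lattice of $\alpha$-orientations. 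The point is that plane bipolar orientations are in bijection with these $\alpha$-orientations of $\QQ M$, so I am free to work entirely on the quadrangulation side.

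Next I would apply the opening operation of Theorem~\ref{thm:open} and Corollary~\ref{cor:blossom} to $\QQ M$ endowed with this minimal accessible orientation. This produces a unique vertex-rooted blossoming tree \BB{\QQ M} whose in- and out-degree distribution is inherited from the orientation. The crucial bookkeeping step is to translate the parameters $i$ (non-pole vertices of $M$) and $j$ (generic faces of $M$) into the combinatorial data of the blossoming tree: vertices of $\QQ M$ split into the white vertices coming from $M$ and the black vertices coming from faces of $M$, and the indegree-$2$ condition forces each non-pole vertex to carry a prescribed number of opening stems, turning the blossoming tree into a tree of a rigid shape. I expect the trees obtained to be a family of ternary-like or bicolored blossoming trees counted jointly by the two parameters, and the enumeration should factor into a product of two symmetric contributions — one in $i$ and one in $j$ — which is precisely the shape of~\eref{eq:nb-bipolar}.

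Finally I would enumerate the resulting family of blossoming trees. Following the general strategy used elsewhere in the paper (as for Eulerian and non separable maps), I would embed the balanced blossoming trees into a larger family of planted trees that is stable under rerooting, count the larger family by a product formula for plane trees with prescribed arities, and then recover the balanced ones by a cycle-lemma type argument that identifies the correct proportion (here a ratio involving $i+j$) of rootings that yield balanced trees. Multiplying the count of the auxiliary trees by this proportion, and using the symmetry between vertices and faces inherited from the self-dual flavor of the quadrangulation construction, should reconstruct the three-fold product of factorials in both $i$ and $j$ displayed in~\eref{eq:nb-bipolar}.

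The main obstacle, I expect, is the precise identification and enumeration of the blossoming tree family, together with the rerooting/proportion argument. The symmetric double-product form of Baxter's formula is not something that drops out of a single one-variable tree count; getting both the $i$- and $j$-dependence correct requires carefully tracking how black and white vertices, opening stems, and the root conditions interact under the closure, and the analogue of the ``fraction of balanced rootings'' computation is likely to be the delicate combinatorial heart of the proof rather than the orientation-theoretic setup, which follows routinely from the results already established.
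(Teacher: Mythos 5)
There is a genuine gap, and it sits at the very first step. You propose to transfer the bipolar orientation to the quadrangulation $\QQ M$ (indegree $0$ at the poles, indegree $2$ elsewhere) and then invoke Corollary~\ref{cor:blossom} on $\QQ M$ ``endowed with this minimal accessible orientation.'' But Corollary~\ref{cor:blossom} requires the orientation to be \emph{minimal} (no counterclockwise cycle), and the $\alpha$-orientation of $\QQ M$ induced by an arbitrary bipolar orientation is not minimal in general: only the one corresponding to the \emph{minimal} bipolar orientation of $M$ is. If you replace each orientation by the minimal representative of its lattice, you collapse all bipolar orientations of a given map to a single object and you end up counting non separable maps (the content of Subsection~\ref{sub:nonsep}), not bipolar orientations. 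The way the paper gets around this is much more direct and is the key observation you are missing: a bipolar orientation of $M$ is \emph{acyclic}, hence has no counterclockwise cycle and is automatically minimal as an orientation of $M$ itself, and its sink is accessible from every vertex; so Corollary~\ref{cor:blossom} applies to $M$ directly (rooted at the sink), with no detour through $\QQ M$ and no need to single out a canonical orientation per map.

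The second gap is the enumeration. You correctly anticipate that Baxter's formula will not come out of a one-parameter tree count times a ``fraction of balanced rootings,'' but you leave the resolution open, and the rerooting/cycle-lemma strategy you sketch is the wrong tool here: $\Theta_{ij}$ is a $3\times 3$ determinant of binomial coefficients in disguise, not a product of a Catalan-type count by a rational proportion. The paper instead characterizes the contour words of the blossoming trees arising from bipolar orientations (Lemma~\ref{lem:Tbip}), extracts from each such word three subwords $\w1,\w2,\w3$ that encode a triple of non-intersecting up-right lattice paths in $\Pbip_{i,j}$ (Propositions~\ref{prop:treetopaths} and~\ref{prop:pathstotree} establish that this is a bijection), and then obtains \eref{eq:nb-bipolar} by the Lindstr\"om--Gessel--Viennot lemma. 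Your bookkeeping intuition about translating $i$ and $j$ into stems and edges of the tree is sound (the tree has $i+1$ stem pairs and $j+1$ edges, up to swapping the roles of the parameters), but without the path encoding and the determinantal evaluation the argument does not reach the stated formula.
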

The first proof of this formula was given by Baxter~\cite{Baxter01}.
His proof involves quite technical computation and relies on a ``guess
and check'' approach.  Since then, some bijective proofs of this
result have been obtained in~\cite{FuPoSc09},~\cite{BoBoFu10}
and~\cite{FeFuNoOr11}. Our generic scheme provides a new bijective
proof.

\subsection{Bijection with triple of paths}\label{sub:defbip}

Let us consider paths on $\ZZ^2$ made of right-steps $(0,1)$ and
up-steps $(1,0)$. A configuration of such paths is called
\emph{non-intersecting} if each vertex of $\ZZ^2$ belongs to at most
one path. For $i,j\in \NN$, define the set $\Pbip_{i,j}$ of
non-intersecting triple of paths $(\p1,\p2,\p3)$, each made of exactly
$i$ right- and $j$ up-steps and starting respectively at $(-1,1)$,
$(0,0)$ and $(1,-1)$ (and hence ending at $(i-1,j+1)$, $(i,j)$ and
$(i+1,j-1)$), see Fig.~\ref{fig:paths_bipolar}.

The rest of this section is devoted to the proof of the following
theorem, from which a direct application of Lindström-Gessel-Viennot
Lemma \cite{GesVie89} yields the enumerative result of Baxter:
\begin{thm}\label{thm:bij_bip}
  For all positive integers $i$ and $j$, there exists a one-to-one
  constructive correspondence between the set of plane bipolar
  orientations with $i$ generic faces and $j$ non-pole vertices and
  the set $\Pbip_{i,j}$.
\end{thm}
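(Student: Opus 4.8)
The plan is to apply the generic bijective scheme of Corollary~\ref{cor:blossom} to plane bipolar orientations, obtaining a family of blossoming trees, and then to encode these trees by triples of lattice paths. The key observation is that a plane bipolar orientation is exactly a non-separable map equipped with an acyclic orientation having a single source and a single sink; by the properties listed below Proposition~\ref{prop:nonsepbip}, such an orientation corresponds to an $\alpha$-orientation of the quadrangulation (or equivalently an orientation of the radial map $\RR M$ with two clockwise edges per generic face). Since we want to capture \emph{all} bipolar orientations and not only the minimal one, the first step is to pass to the minimal $\alpha$-orientation of $\overline{\RR M}$ in the sense of Theorem~\ref{th:Felsner}, so that Corollary~\ref{cor:blossom} applies and yields a canonical blossoming tree $\BB M$.

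First I would identify precisely the combinatorial shape of the blossoming tree produced by opening. As in the non-separable case (Lemma~\ref{lem:spanning} and the discussion around Fig.~\ref{fig:non_sep_arbre}), the tree edges are the ``extra'' edges added in each special face, so the underlying tree is a spanning tree of $\overline{\RR M}$ whose vertices are of two colors (coming from faces and from non-pole vertices of $M$), each carrying a controlled number of opening and closing stems dictated by the local degree constraints. The parameters $i$ (generic faces) and $j$ (non-pole vertices) translate directly into the numbers of the two types of nodes, and the $\alpha$-orientation being regular means the closing stems are redundant, as noted at the end of Subsection on simple triangulations: the tree is thus determined by its shape together with the placement of its opening stems. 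The goal is to read off from this decorated tree three interleaved sequences of steps.

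Next I would set up the encoding into the triple $(\p1,\p2,\p3)\in\Pbip_{i,j}$. The natural idea is to traverse the contour of the blossoming tree and record, at each step, whether we cross a face-node or a vertex-node, thereby producing a word that splits into three subwords, one per path, each with exactly $i$ right-steps and $j$ up-steps. The three fixed starting points $(-1,1)$, $(0,0)$, $(1,-1)$ should encode the three ``layers'' of the bipolar structure (source side, bulk, sink side), and the \emph{non-intersecting} condition on the paths should correspond exactly to the minimality of the orientation, i.e.\ the forbidden pattern of Fig.~\ref{fig:prop_bipolar_min} should be equivalent to the three paths staying vertex-disjoint. Establishing this equivalence is where I would expect to spend the most care: one direction shows that any minimal bipolar orientation yields non-intersecting paths, and the other reconstructs a valid minimal orientation from an arbitrary element of $\Pbip_{i,j}$ by running the closure.

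\textbf{The main obstacle} will be proving the non-intersection property in both directions, that is, matching the lattice-combinatorial constraint (paths avoiding common vertices of $\ZZ^2$) with the map-theoretic constraint (minimality, equivalently absence of counterclockwise cycles / absence of the forbidden pattern). The forward direction should follow from the fact that the closure preserves minimality and from a careful bookkeeping of how the contour word interleaves the three subwords; the reverse direction requires showing that every non-intersecting triple closes into a genuine bipolar orientation (acyclicity, unique source and sink) rather than some degenerate configuration, which is exactly where the existence and uniqueness guarantees of Theorem~\ref{thm:open} and Corollary~\ref{cor:blossom} do the heavy lifting. Once the bijection $\Pbip_{i,j}\leftrightarrow\{\text{plane bipolar orientations}\}$ is established, the enumerative formula~\eref{eq:nb-bipolar} of Baxter follows immediately by applying the Lindström--Gessel--Viennot lemma to count non-intersecting triples with the prescribed endpoints.
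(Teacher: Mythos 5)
Your proposal goes wrong at the very first step, and the error is fatal to the bijection. You propose to transfer the bipolar orientation to an $\alpha$-orientation of the radial map/quadrangulation and then replace it by the \emph{minimal} $\alpha$-orientation of $\overline{\RR M}$ "so that Corollary~\ref{cor:blossom} applies." But the whole point of Theorem~\ref{thm:bij_bip} is to count \emph{all} plane bipolar orientations, and the set of bipolar orientations of a fixed map is in bijection with the full lattice of $\alpha$-orientations of its quadrangulation; collapsing each to the minimal element of that lattice forgets which bipolar orientation you started from. What you would obtain is the construction of Subsection~\ref{sub:nonsep}, i.e.\ a bijection for non-separable \emph{maps}, not for bipolar orientations. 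The same confusion reappears when you assert that the non-intersection of the three paths "should correspond exactly to the minimality of the orientation" and to the forbidden pattern of Fig.~\ref{fig:prop_bipolar_min}: generic bipolar orientations are not minimal, so minimality cannot be the source of the non-intersection constraint.

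The observation you are missing — and which makes the paper's proof work with no detour through $\RR M$ — is that a bipolar orientation of $M$ is \emph{acyclic}, hence has no counterclockwise cycles and is already a minimal orientation of $M$ itself, and its sink is accessible from every vertex. So Corollary~\ref{cor:blossom} applies \emph{directly} to $M$ with its given orientation, rooted at the sink, producing a distinct blossoming tree for each distinct bipolar orientation. The paper then reads off the contour word $w$ on $\{e,\be,b,\bb\}$ and extracts the three subwords $\w1=w_{|e,\bb}$, $\w2=w_{|\be,\bb}$, $\w3=w_{|\be,b}$; the non-intersection of the resulting paths comes from the five combinatorial conditions of Lemma~\ref{lem:Tbip} (the tree and balance conditions give the weak ordering of the paths, and the acyclicity and unique-source conditions forbid the shared edges that would make them touch), and the inverse map is an explicit shuffle of the "bricks" of $\w1$ and $\w3$ driven by $\w2$, with uniqueness again forced by those same conditions. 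Your proposed encoding (recording face-nodes versus vertex-nodes of a two-colored spanning tree of $\overline{\RR M}$) does not match the target parameters either, since after minimalization the data left would count only the underlying map.
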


Other bijections between plane bipolar orientations and the set
$\Pbip$ already appear in~\cite{FuPoSc09,FeFuNoOr11}. It must
nevertheless be emphasized that the bijection we obtained is
different, thus providing a first example where our general scheme
yields a new bijective construction.  To prove the theorem, we start
by applying the generic scheme to open a bipolar orientation into a
blossoming tree, which is then encoded by a non-intersecting triple of
paths.
\begin{figure}[t]
  \centering
\captionsetup{justification=centering}
  \subfigure[A plane bipolar
  orientation,\label{fig:ex_bipolar}]{\qquad\quad%
    \includegraphics[scale=1, page=1]{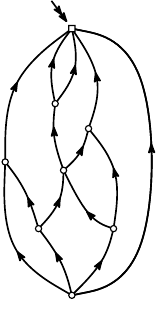}\qquad\quad}\quad%
  \subfigure[its blossoming tree,\label{fig:tree_bipolar}]{\qquad%
    \includegraphics[scale=1,page=3]{orientation_bipolaire}\qquad}\qquad%
 \subfigure[the resulting triple of
  paths.\label{fig:paths_bipolar}]{%
    \includegraphics[scale=0.9, page=1]{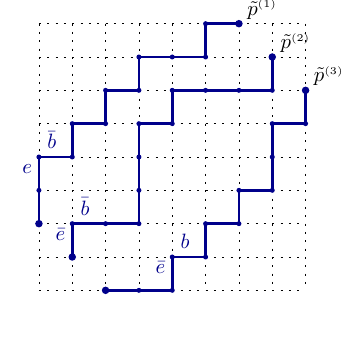}}
  \caption{Example of the bijection for a bipolar orientation $M$.\\ The contour word $w$ of its blossoming tree is $ebbb\be ee \bb b \be e \bb e \bb b \be b \be \be e \bb b \be \bb e \bb \bb \be$.\\
Then $\w1=e ee \bb e \bb e \bb e \bb \bb e \bb \bb$, $\w2= \be \bb \be \bb \bb \be \be \be \bb \be \bb \bb \bb \be$ and 
$\w3= bbb \be b \be b \be b\be \be b \be \be$. }
 \label{fig:bipolar}
\end{figure}

\subsection{From bipolar orientations to configurations of
  paths}\label{sec:treetopaths}
Since any bipolar orientation is acyclic and its sink vertex $t$ is
accessible from any vertex, Corollary~\ref{cor:blossom} can be applied
to open it into a blossoming tree, rooted at the former outer corner
of $t$. 
Let $\Tbip$ (resp. $\Tbip_{i,j}$) be the set of balanced blossoming
trees obtained when opening a plane bipolar orientation (resp. with
$i$ generic faces and $j$ non-pole vertices).

Let $T$ be a rooted blossoming tree, we consider its contour word $w$
on the alphabet $\{e,\be, b, \bb\}$, where $e$ and $\be$ encode
respectively the first and second exploration of an edge and $b$ and
$\bb$ encode respectively the exploration of an opening or a closing
stem. For instance the contour word of the tree of
Fig.\ref{fig:tree_bipolar} is $ebbb\be ee \bb b \be e \bb e \bb b \be
b \be \be e \bb b \be \bb e \bb \bb \be$. Let furthermore define $\w
1$, $\w2$ and $\w3$ as the subwords of $w$ obtained respectively by
keeping only the letters $e$ and $\bb$, $\be$ and $\bb$, $\be$ and
$b$:
\[
\w1 = w_{|e,\bb},\quad \w2=w_{|\be,\bb}\quad\text{and}\quad \w3=w_{|\be,b}.
\]

\begin{claim}\label{claim:trip-word}
  For $T$ a blossoming tree in $\Tbip_{i,j}$, the words $\w1$, $\w2$ and $\w3$
  have length $\ell=i+j+2$ and furthermore: 
  \[\w1_1=e\text{ and }\,\w1_{\ell}=\bb,\qquad\quad
  \w2_1=\be \text{ and }\,\w2_2=\bb,\qquad\quad
  \w3_1=b\text{ and }\,\w3_{\ell}=\be,\]
  where $w_k$ denotes the $k$-th letter of a word $w$. 
\end{claim}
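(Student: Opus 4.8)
The plan is to prove Claim~\ref{claim:trip-word} by analysing the structure of the contour word $w$ of a blossoming tree $T \in \Tbip_{i,j}$, relating its combinatorial statistics to the parameters $i$ and $j$ of the underlying bipolar orientation. The key observation is that $w$ records a full traversal of the tree: each tree edge contributes exactly one $e$ (first passage) and one $\be$ (second passage), each opening stem contributes one $b$, and each closing stem contributes one $\bb$. Since $T$ is obtained by opening a plane bipolar orientation with $i$ generic faces and $j$ non-pole vertices, I would first establish the edge and stem counts. By Corollary~\ref{cor:blossom}, in- and out-degrees are preserved under closure, and each closure edge becomes one opening/one closing stem pair; so the number of $b$'s equals the number of $\bb$'s equals the number of closure (non-tree) edges of the bipolar orientation. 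A standard Euler-relation computation on the bipolar orientation (using that the spanning tree \TT M{} has one fewer edge than its vertex count, and that closure edges correspond to the independent cycles, i.e.\ roughly the generic faces) should yield that the number of opening stems is $i$ and the number of tree edges is $j+1$, whence $|\w1| = |\w2| = |\w3| = i + j + 2 = \ell$, since each of the three subwords keeps exactly two of the four letter-types and the counts combine correctly (e.g.\ $\w1$ keeps the $e$'s and $\bb$'s, numbering $(j+1) + (i+1)$ after accounting for the root).

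Next I would pin down the boundary letters. For the first and last letters, the decisive input is the \emph{rooting convention}: the tree is rooted at the former outer corner of the sink $t$, and the root edge of the bipolar orientation is the rightmost one with $t$ at the top and $s$ at the bottom. I would argue that the contour word, read clockwise from the root corner, necessarily \emph{begins} with the exploration of the root edge (an $e$) — this gives $\w1_1 = e$ directly, since $e$ is the first letter of $w$ surviving into $\w1$. Symmetrically, the traversal \emph{ends} by returning along the root edge for the second time, i.e.\ the last letter of $w$ is $\be$; this gives $\w3_{\ell} = \be$ and, since $\be$ does not survive into $\w1$, the last surviving letter of $\w1$ is the final $\bb$, giving $\w1_{\ell} = \bb$. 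Likewise $\be$ is the first letter surviving into $\w3$, giving $\w3_1 = b$ only after checking that the very first stem encountered is an opening one.

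The most delicate boundary conditions are $\w2_1 = \be$, $\w2_2 = \bb$, and $\w3_1 = b$, because these require understanding the \emph{local} structure at the root immediately after the initial edge exploration. Here I would use the balancedness of $T$ together with the accessibility properties of bipolar orientations: because $s$ is the unique source and $t$ the unique sink, and the tree is oriented towards the root $t$, the configuration of stems around the root vertex and its first child is forced. Specifically, I expect that right after the root edge is first explored ($e$), the traversal encounters a block of opening stems (the first of which gives $\w3_1 = b$), so that the next letter surviving into $\w2$ (which keeps only $\be$ and $\bb$) is the $\be$ coming from the second passage of the root edge, forcing $\w2_1 = \be$; and then the immediately following letter surviving into $\w2$ is the closing stem $\bb$ matched to the root's opening stem, giving $\w2_2 = \bb$. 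This step is the main obstacle, since it hinges on a careful case-free description of the neighbourhood of the sink in the opened tree. I would verify it by inspecting the generic local picture at the root guaranteed by Theorem~\ref{thm:open} — the root edge is a tree edge oriented into $t$, and its adjacent stems are determined by the minimal accessible orientation — and by cross-checking against the worked example in Figure~\ref{fig:bipolar}, where indeed $\w1$, $\w2$, $\w3$ exhibit exactly the asserted first two (resp.\ last) letters.
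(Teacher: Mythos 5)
Your overall strategy --- reading the claim directly off the contour word of the opened bipolar orientation --- is sound, and it is essentially what the paper does implicitly: the proof of the Claim is omitted there and deduced from the characterization of contour words in Lemma~\ref{lem:Tbip}, whose five conditions encode exactly the structural facts you need ($s$ is the unique source, $t$ the unique sink, every non-$s$ leaf carries a closing stem). However, several of your individual steps are wrong or unsupported as written.

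First, the count: the number of opening stems is $i+1$, not $i$; the closure edges are in bijection with the bounded faces of $M$, of which there are $i+1$ ($i$ generic ones plus the bounded face incident to the root edge), and the phrase ``after accounting for the root'' papers over this slip rather than fixing it. Second, and more seriously, the traversal does \emph{not} end by returning along the root edge: the root edge $(s,t)$ is the \emph{first} edge explored, $s$ is a leaf carrying only opening stems (it is the source, so it has no incoming half-edge), and the second passage $\be$ of the root edge therefore occurs at position $k+2$ of $w$, where $k\geq 1$ is the number of opening stems of $s$ --- this early $\be$ is precisely what gives $\w2_1=\be$. Consequently your derivations of $\w3_{\ell}=\be$ and $\w1_{\ell}=\bb$ do not stand as stated. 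The correct arguments are: $t$ is the sink, hence carries no opening stem, so no $b$ occurs after the last $\be$ of $w$, giving $\w3_{\ell}=\be$; and the last $e$ of $w$ opens an edge leading to a leaf distinct from $s$, which must carry a closing stem by uniqueness of the source, so a $\bb$ occurs after the last $e$, giving $\w1_{\ell}=\bb$ (note that removing the $\be$'s from $w$ does not by itself tell you whether the last surviving letter is an $e$ or a $\bb$). Finally, for $\w2_2=\bb$, which you rightly flag as the delicate point, the same fact closes the gap: a second consecutive $\be$ in $\w2$ would be the second passage of an edge first visited at a position $>1$, and the subtree explored between its two passages contains a leaf $\neq s$, hence a closing stem, forcing a $\bb$ in between --- this is exactly Condition~(5) of Lemma~\ref{lem:Tbip}.
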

This claim is a consequence of Lemma~\ref{lem:Tbip} below, we omit its proof.

\noindent For $T\in \Tbip$, Claim~\ref{claim:trip-word} enables to define $(\tw1,\tw2,\tw3)$ as: 
\begin{equation}
  \w1
  = e\,\tw1\,\bb,\quad \w2= \be\bb\,\tw2\quad\text{and}\quad \w3 = b
  \,\tw3\, \be. 
  \label{eq:w-tilde}
\end{equation}
Triple of words $(\tw1,\tw2,\tw3)$ can be naturally represented by
triple of up-right paths $(\tp1,\tp2,\tp3)$, with initial points
$(-1,1)$, $(0,0)$ and $(1,-1)$, by replacing letters $e$ or $\be$ by
up-steps and letters $b$ and $\bb$ by right-steps. Let $\Phi$ be the
application that associates to each tree of $\Tbip$ the corresponding
triple of paths, see Fig.~\ref{fig:bipolar}.  Observe that if $M$ has
$i$ generic faces and $j$ non-pole vertices, then the corresponding
blossoming tree $T$ has $i+1$ pairs of opening-closing stems and $j+1$
edges, therefore each of the paths $\tp1$, $\tp2$ and $\tp3$ have
exactly $i$ right-steps and $j$ up-steps.

\begin{prop}\label{prop:treetopaths}
Let $T$ be an element of $\Tbip_{i,j}$. Then its image by $\Phi$ is non-intersecting, in other words
it belongs to $\Pbip_{i,j}$. 
\end{prop}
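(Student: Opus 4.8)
The plan is to reduce the non-intersection of the triple to a family of strict inequalities that can be read off the contour word, and then to establish these inequalities from the Dyck-type structure of $T$ together with the local structure inherited from the underlying bipolar orientation. First I would reformulate the statement in terms of anti-diagonals. Each up-right step increases $x+y$ by exactly $1$, so every one of $\tp1,\tp2,\tp3$ meets each anti-diagonal $\{x+y=m\}$ in exactly one lattice point. Since the three starting points $(-1,1)$, $(0,0)$, $(1,-1)$ all lie on $m=0$ and the three endpoints all lie on $m=i+j$, the three paths sweep exactly the anti-diagonals $0\le m\le i+j$, one point each. Hence the configuration belongs to $\Pbip_{i,j}$, i.e. is non-intersecting, if and only if on each such anti-diagonal the three points are distinct; as they share the value $x+y=m$, this is equivalent to the strict chain $y_{\tp1}(m)>y_{\tp2}(m)>y_{\tp3}(m)$ for all $m$, where $y_{\cdot}(m)$ is the ordinate of the corresponding point. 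It therefore suffices to prove these two families of strict inequalities.

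Next I would set up reading-time bookkeeping along the contour word $w$. For each prefix $w_{\le k}$ let $\alpha_k,\bar\alpha_k,\beta_k,\bar\beta_k$ count the letters $e,\be,b,\bb$ respectively. Using the step rules ($e$ advances $\tp1$ upward, $\be$ advances $\tp2$ and $\tp3$ upward, $b$ advances $\tp3$ rightward, $\bb$ advances $\tp1$ and $\tp2$ rightward) and accounting for the boundary letters stripped in \eqref{eq:w-tilde}, the current positions along the three paths are $(\bar\beta_k-1,\alpha_k)$, $(\bar\beta_k-1,\bar\alpha_k-1)$ and $(\beta_k,\bar\alpha_k-1)$. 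Thus at every reading time $\tp1$ and $\tp2$ share their abscissa with $y_{\tp1}-y_{\tp2}=D_k+1$, while $\tp2$ and $\tp3$ share their ordinate with $x_{\tp3}-x_{\tp2}=S_k+1$, where $D_k=\alpha_k-\bar\alpha_k$ is the edge-depth and $S_k=\beta_k-\bar\beta_k$ the stem-depth. Since each $\be$ (the second visit of a tree edge) is preceded by its matching $e$, one has $D_k\ge 0$; and since $T$ is balanced, the restriction of $w$ to $\{b,\bb\}$ is a Dyck word, so $S_k\ge 0$. This already yields the weak ordering at every reading time.

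The hard part will be upgrading this weak reading-time ordering to the strict anti-diagonal inequalities, and this is where the special structure of the trees of $\Tbip_{i,j}$ must enter. The weak ordering alone is genuinely insufficient: two paths that share their right-steps can still meet at a vertex if, inside a maximal $\bb$-free stretch of the contour entered at edge-depth zero, a complete tree edge $e\be$ is explored, in which case $\tp1$ and $\tp2$ would visit a common lattice point at different reading times. I would rule this out using the provenance of the trees in $\Tbip_{i,j}$: by Corollary~\ref{cor:blossom} they are openings of plane bipolar orientations, whose faces each carry a single source corner, a single sink corner and lateral corners, and whose non-pole vertices each have exactly one incoming and one outgoing bundle. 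Carrying this local description through the \TCP constrains, around every vertex, the cyclic arrangement of tree edges and of opening/closing stems, which forbids precisely the isolated edge-exploration described above.

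Concretely, I would translate this vertex-local constraint into the prefix counts and show that at every anti-diagonal the comparison is strict: writing $P,P'$ for the $w$-positions realizing a given anti-diagonal on $\tp1$ and $\tp2$, the local structure forces $\alpha_{P}\ge\bar\alpha_{P'}$ and hence $y_{\tp1}(m)>y_{\tp2}(m)$; the inequality $y_{\tp2}(m)>y_{\tp3}(m)$ is then obtained by the symmetric argument, exchanging the edge pair $(e,\be)$ with the stem pair $(b,\bb)$ and abscissas with ordinates, and using $S_k\ge 0$ in place of $D_k\ge 0$. I expect this passage -- turning the source/sink/lateral description of the bipolar orientation into the global prefix inequality on each anti-diagonal -- to be the main obstacle, the rest of the argument being routine bookkeeping.
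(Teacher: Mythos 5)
Your setup is sound and, up to a change of parametrization, coincides with the paper's: the paper likewise derives the weak ordering of $\p1,\p2,\p3$ from the two Dyck conditions (your $D_k\ge 0$ and $S_k\ge 0$) and then observes that the only possible failure of strict non-intersection after the one-step translation is a shared vertical edge of $\p1,\p2$ or a shared horizontal edge of $\p2,\p3$ --- exactly the ``meeting at different reading times'' that you isolate. The genuine gap is that you never prove the exclusion of these meetings; you explicitly defer it as ``the main obstacle''. This exclusion is precisely the content of Conditions~(4) and~(5) of Lemma~\ref{lem:Tbip}, which the paper establishes separately and then simply invokes, so the decisive step of your argument is missing.

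Moreover, the route you sketch for filling it would not work as stated. For the pair $(\tp1,\tp2)$ the forbidden pattern is a factor $e\cdots\be$ with no intervening $\bb$ (other than the one coming from the edge $(t,s)$, absorbed by your offsets), and its exclusion comes from the \emph{uniqueness of the source}: a leaf of $T$ other than $s$ carrying no closing stem would be a second source. For the pair $(\tp2,\tp3)$ the forbidden pattern is a factor $b\cdots\bb$ with no intervening $\be$, i.e.\ an opening stem matched with a closing stem of a descendant; this creates an oriented cycle in the closure and is excluded only by the \emph{global acyclicity} of the bipolar orientation. Neither fact follows from the cyclic arrangement of tree edges and stems around each vertex: for instance a pair of oppositely oriented parallel edges is consistent with every vertex having one incoming and one outgoing bundle, yet violates acyclicity. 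So the vertex-local source/sink/lateral description you propose to carry through the \TCP is not the right tool, and your ``symmetric argument'' for $(\tp2,\tp3)$ silently swaps a statement about sources for one about cycles --- the two halves need genuinely different (and global) inputs, which is exactly what Conditions~(4) and~(5) of Lemma~\ref{lem:Tbip} package.
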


\noindent This follows from:

\begin{lem}\label{lem:Tbip}
  A word $w$ on the alphabet $\{e,\be,b,\bb\}$ is the contour word of
  an element of $\Tbip$ if and only if the five following conditions
  hold:

  \begin{itemize}
  \item[(1)] $w_1=e$ and $w_2=b$;
    \qquad(2) $w_{|e,\be}$ is a Dyck word;
    \qquad(3) $w_{|b,\bb}$ is a Dyck word;
  \item[(4)] for all $i<j$, if $w_i=b$ and $w_j=\bb$, 
    there exists $k\in [i,j]$ such that $w_k=\be$;
  \item[(5)] for all $1<i<j$, if $w_i=e$ and $w_j=\be$, there exists
    $k\in[i,j]$ such that $w_k=\bb$. 
  \end{itemize}
\end{lem}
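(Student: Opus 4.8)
The plan is to prove Lemma~\ref{lem:Tbip} by showing both directions: that the contour word of any blossoming tree in $\Tbip$ satisfies conditions (1)--(5), and conversely that any word satisfying these five conditions arises as such a contour word. The forward direction should follow by examining the structure imposed by the opening of a plane bipolar orientation. Conditions (2) and (3) are essentially automatic: since $e$ and $\be$ encode the first and second traversals of the \emph{same} edge in a depth-first contour, every $e$ is eventually matched by a later $\be$ in a properly nested way, giving a Dyck word; and since the tree is \emph{balanced} (in the sense defined in Section~\ref{sec:PrevBij}), the restriction to $\{b,\bb\}$ is likewise a Dyck word. Condition (1) records that the root corner is followed first by the root edge and then, as observed for bipolar orientations, by an opening stem. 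The substantive content lies in conditions (4) and (5), which encode the \emph{minimality} and acyclicity of the underlying bipolar orientation: they forbid exactly the patterns that would correspond to the forbidden configuration of Fig.~\ref{fig:prop_bipolar_min} (a counterclockwise cycle in the quadrangulation) or to a violation of the bipolar structure.

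For the forward direction, I would translate each forbidden local pattern into a statement about the closure. Condition (4) says that between the exploration $b$ of an opening stem and the exploration $\bb$ of a closing stem, if no edge is closed in between (no $\be$), then $b$ and $\bb$ must be a matched closure pair; the presence of an intervening $\be$ witnesses that these two stems cannot be matched to each other, which corresponds to a constraint forcing the closure edge to be oriented correctly relative to the bipolar structure. Condition (5) is the dual statement controlling how edge-traversals interleave with closing stems, and it is here that minimality enters: a pattern $e \cdots \be$ with no intervening $\bb$ would correspond precisely to the forbidden counterclockwise configuration. I would argue each of these by contradiction, assuming the pattern is violated and exhibiting either a counterclockwise cycle (contradicting minimality) or a non-acyclic orientation (contradicting the bipolar property).

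For the converse direction, I would reconstruct the blossoming tree from the word $w$ and then apply Corollary~\ref{cor:blossom} in reverse: conditions (2) and (3) guarantee that $w$ genuinely encodes a balanced blossoming tree (the edge-traversals nest properly and the stems balance), so that its closure $\overline{T}$ is a well-defined map with an orientation induced by orienting each closure edge from its opening to its closing stem. The task is then to verify that this oriented map is a plane bipolar orientation, that is, that it is acyclic with the correct unique source and sink. Conditions (4) and (5) are exactly what is needed to rule out oriented cycles in the closure and to enforce the single-source/single-sink structure, so the verification reduces to checking that these interleaving constraints on $w$ translate into acyclicity and the bipolar pole conditions on $\overline{T}$.

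The main obstacle will be the converse direction, specifically establishing acyclicity of the reconstructed orientation purely from the combinatorial interleaving conditions (4) and (5). While the forward direction only requires reading off necessary constraints from a known bipolar structure, the converse demands that these constraints be \emph{sufficient} to guarantee bipolarity, and the geometry of how closure edges can wrap around one another during the contour process is delicate. I expect the key technical step to be a careful bookkeeping argument, likely by induction on the length of $w$ or on the number of closure operations, showing that each stem-matching produced by the stack-based closure respects the left-to-right orientation coming from the up-right paths, so that no oriented cycle can form and the extremities of the root edge emerge as the unique sink and source.
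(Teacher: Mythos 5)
Your two-direction outline matches the paper's proof, and your reading of Conditions (1)--(3) and (4) is essentially right (for (4), a violated pattern $be^{\star}\bb$ means an opening stem is matched with a closing stem carried by a descendant, creating an oriented cycle). The genuine error is your interpretation of Condition (5). You attribute it to the \emph{minimality} of the bipolar orientation and claim that a pattern $e\cdots\be$ with no intervening $\bb$ ``would correspond precisely to the forbidden counterclockwise configuration'' of Fig.~\ref{fig:prop_bipolar_min}. But Section~\ref{sec:bipolar} treats \emph{all} plane bipolar orientations, not only the lattice-minimal ones used in Section~\ref{sub:nonsep}; minimality in the sense of the generic scheme is automatic because a bipolar orientation is acyclic and hence has no counterclockwise cycle, and the forbidden pattern of Fig.~\ref{fig:prop_bipolar_min} plays no role in this lemma. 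What a violation of Condition (5) actually yields (after passing to a matched pair, using the nesting of $w_{|e,\be}$) is an edge whose subtree contains no $\bb$ at all, i.e.\ a subtree of $T$ carrying no closing stem; a leaf of that subtree then has indegree $0$ in the closure and is a second source, contradicting the single-source property. No cycle, clockwise or otherwise, is produced, so your plan to ``exhibit either a counterclockwise cycle or a non-acyclic orientation'' would stall exactly on (5).

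On the converse, your outline (reconstruct $T$ from (2)--(3), then verify acyclicity and the pole conditions) is the paper's, but the step you defer to ``careful bookkeeping'' is where all the content sits, and the needed argument is short and specific rather than an induction on $|w|$ or on closure operations. Since each local closure matches an opening stem with a closing stem whose corner is explored \emph{later} in the contour, any oriented cycle in the closure must use a closure edge joining a vertex to one of its own descendants in $T$. Conditions (4) and (5) together rule this out by forcing every occurrence of $b$ in $w$ to be followed by $b^{\star}\be$: take the first $\be$ after a given $b$; it precedes the first subsequent $\bb$ by (4), and its matching $e$ must precede the $b$ by (5), so no letter $e$ (and a fortiori no descendant) intervenes, and the matched closing stem lies outside the subtree of the opening vertex. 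You should also make explicit the check that every non-pole vertex has an incoming edge --- internal nodes via their children, leaves via the closing stem whose existence is again forced by the interleaving conditions --- which your plan omits in the converse direction.
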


\begin{proof}
  Let $T$ be an element of $\Tbip$ and let $w$ be its contour
  word. For the first condition, observe that the root edge, oriented
  from $s$ to $t$, belongs to the blossoming tree and is the first
  edge encountered in the contour of $T$, thus giving a first letter
  $e$. Moreover, $s$ is a leaf of $T$ that carries only opening stems
  and at least one, so $w_2=b$. Condition~(2) reflects the fact that $T$
  is a tree and Condition~(3) that it is balanced.

  Conditions~(4) and~(5) are proved by contradiction. If Condition~(4) does
  not hold, there exists a factor of $w$ of the form $be\cdots e\bb$
  or $b\bb$. It implies that a vertex is matched with one of its
  descendants (possibly itself) in the closure, producing an oriented
  cycle, a contradiction.  Similarly, if Condition~(5) does not hold,
  there exists a leaf of $T$ (different from $s$) which does not carry
  a closing stem. It contradicts the uniqueness of the source.
  \smallskip

  Reciprocally, Conditions~(2) and~(3) imply that $w$ is the
  contour word of a balanced blossoming tree $T$. Moreover, Conditions~(1), (4) and~(5) imply that 
the first subtree of the root is reduced to one edge $(t,s)$, where $s$
  carries at least one opening stem. 
 To ensure that $T$ belongs to $\Tbip$, it is enough to prove (a) that each
  vertex of $T$ different from $s$ or $t$ has at least one ingoing and one outgoing edges and
  (b) that the orientation of the closure is acyclic. 
  
 Each node of $T$ has at least one ingoing edge per child and Condition~(4) ensures that each leaf of $T$ carries at least one closing stem. Moreover each vertex but $t$ has one outgoing edge (oriented towards its parent in $T$), hence (a) is satisfied. 
 
If the opening stem $b$ is matched with the closing stem $\bar b$, then the corner incident to $b$ is explored before the one incident to $\bar b$ in the contour process of $T$. It implies that if there exists an oriented cycle in the closure of $T$, at least one closure edge links a node to one of its descendants in $T$. Let us prove that this cannot happen, since in the contour word of $T$, each occurrence of $b$ is followed by $b^\star \bar e$. Consider an occurrence of $b$ in $w$ and the first occurrence of $\bb$ after it. Consider the first occurrence of $\be$ after $b$ (which precedes $\bb$ by Condition~(4)).  The corresponding occurrence of $e$ necessarily precedes $b$ by Condition~(5), hence there is no occurrences of $e$ between $b$ and $\be$. Hence $(b)$ is satisfied.
\end{proof}

\begin{proof}[Proof of Proposition~\ref{prop:treetopaths}]
  For any $\ell \geq 0$, denote $w_{\sss{\leq \ell}}$ the prefix of length $\ell$ of a word $w$,
  and $|w|_x$ the number of occurrences of $x$ in $w$. Let $w$ be the contour word of $T$, Condition~(2) implies that:
  \[
  \forall \ell\geq 0, \quad |\w1_{\sss{\leq \ell}}|_{\bb}\leq|\w2_{\sss{\leq \ell}}|_{\bb}\quad \text{and} \quad
  |\w1_{\sss{\leq \ell}}|_{e}\geq|\w2_{\sss{\leq \ell}}|_{\be}.
  \]
  Consequently, the corresponding paths $(\p1,\p2)$ starting at
  $(0,0)$ are such that $\p1$ lies above and on the left of $\p2$ with
  possible common vertices or edges. However the two paths share no
  vertical edge but the first one by Condition~(5). Hence, after
  translating $\p1$ by an up-step, $\p1$ and $\p2$ are
  non-intersecting and so are $\tp1$ and $\tp2$.  \smallskip

  We can prove similarly that $(\tp2,\tp3)$ is non-intersecting by
  observing first that $\p2$ lies above and on the left of $\p3$
  thanks to Condition~(3) of Lemma~\ref{lem:Tbip} and that $\p2$ and
  $\p3$ cannot share horizontal edge by Condition~(4). Translating $\p3$
  by a right-step and deleting the appropriate steps of $\p2$ and
  $\p3$ yield the desired result.
\end{proof}

\subsection{From configurations of paths to blossoming
trees}\label{sec:pathstotree}
Let $p=(\tp1, \tp2, \tp3)$ be a configuration of paths in
$\Pbip_{i,j}$ and $(\w1,\w2,\w3)$ the corresponding triple of
words. Let us decompose $\w1$ and $\w3$ as a sequence of factors
according respectively to the occurrences of $\bb$ and $\be$:
\[
\w1 = e\cdot \w1_{\sss{[1]}}\cdot \w1_{\sss{[2]}}\cdot \ldots\cdot
\w1_{\sss{[j+1]}}\qquad \text{ and }\qquad\w3 =b\cdot
\w3_{\sss{[1]}}\cdot \w3_{\sss{[2]}}\cdot \ldots\cdot
\w3_{\sss{[i+1]}},
\]
where each factor $\w1_{\sss{[k]}}$ (resp. $\w3_{\sss{[k]}}$) is of
the form $e^\star\bb$ (resp. $b^{\star}\be$).  These factors are
``bricks'' used to reconstruct a \emph{compatible} word $w$, that is a word such that: 
\[
\w1 = w_{|e,\bb},\quad \w2=w_{|\be,\bb}\quad\text{and}\quad \w3=w_{|\be,b}.
\]
The order in
which those bricks are added is driven by $\w2$: let $\bar w$ be
obtained from $\w2$ by replacing its $k$-th occurrence of $\bb$ by
$\w1_{\sss{[k]}}$ and its $k$-th occurrence of $\be$ by
$\w3_{\sss{[k]}}$, and define finally $w = eb\bar w$.

\begin{prop}\label{prop:pathstotree}
  Let $p$ be an element of $\Pbip_{i,j}$ and let $w$ be the
  corresponding word as defined above. The word $w$ is the unique word
  compatible with $p$ that satisfies the five conditions of
  Lemma~\ref{lem:Tbip}. In other words it is the contour word of the
  unique blossoming tree of $\Tbip$ compatible with $p$.
\end{prop}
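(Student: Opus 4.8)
The plan is to verify, in turn, that the reconstructed word $w$ is compatible with $p$, that it satisfies the five conditions of Lemma~\ref{lem:Tbip}, and that it is the only compatible word doing so. Compatibility is essentially forced by the construction: each brick $\w1_{\sss{[k]}}=e^\star\bb$ uses only the letters $e$ and $\bb$, each brick $\w3_{\sss{[k]}}=b^\star\be$ only the letters $b$ and $\be$, and the backbone $\w2$ carries exactly the markers $\be$ and $\bb$. Hence the subword $w_{|e,\bb}$ recovers the bricks of $\w1$ in order (preceded by the initial $e$ of $w=eb\bar w$), $w_{|\be,b}$ recovers those of $\w3$ (preceded by the initial $b$), while $w_{|\be,\bb}$ returns $\w2$ verbatim, each brick contributing precisely its terminal marker. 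I would record this as a direct inspection.

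I would next observe that $w$ satisfies Conditions~(1), (4) and~(5) automatically, from the rigid block structure of the bricks alone and independently of any non-intersection hypothesis. Condition~(1) is immediate from $w=eb\bar w$. For Condition~(4), every $b$ sits inside some brick $\w3_{\sss{[k]}}$ (or is the initial $b$, immediately followed by $\w3_{\sss{[1]}}$ since $\w2$ begins with $\be$ by Claim~\ref{claim:trip-word}); as a brick is never split, the terminal $\be$ of that brick precedes the next $\bb$. Symmetrically, every non-initial $e$ sits inside a brick $\w1_{\sss{[k]}}$, whose terminal $\bb$ precedes the next $\be$, which is Condition~(5), the initial $e$ at position~$1$ being exempt from that condition.

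The substance of the existence part lies in the two Dyck conditions~(2) and~(3), and this is where the non-intersection of $p$ is used; here I would reverse the computation carried out in the proof of Proposition~\ref{prop:treetopaths}. The block structure just established locates the critical prefixes: between two consecutive occurrences of $\bb$, the word $w$ exhibits all the intervening $\be$'s, coming from $\w3$-bricks, strictly before the $e$'s of the next $\w1$-brick, so the prefix balance of $w_{|e,\be}$ is tightest just after the last such $\be$. Condition~(2) therefore amounts to a \emph{shifted} prefix inequality between the $e$-counts of $\w1$ and the $\be$-counts of $\w2$, and I would argue that this shifted inequality says exactly that $\tp1$ stays strictly above $\tp2$, the one-step shift being precisely the translation that turns weak interlacing into genuine non-intersection in Proposition~\ref{prop:treetopaths}. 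Condition~(3) follows identically from the non-intersection of $\tp2$ and $\tp3$.

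For uniqueness I would take an arbitrary word $w'$ compatible with $p$ and satisfying Conditions~(1)--(5) and show $w'=w$. Compatibility fixes the three projections, hence the marker order $\w2$ and the internal orders of $\w1$ and $\w3$; after the prefix $eb$ imposed by Condition~(1), $w'$ is thus a shuffle of the two letter-disjoint sequences $\w1_{\sss{[1]}}\w1_{\sss{[2]}}\cdots$ and $\w3_{\sss{[1]}}\w3_{\sss{[2]}}\cdots$. I would then use Conditions~(4) and~(5) to forbid interleaving: inserting a $\be$ or a $b$ between an $e$ and the closing $\bb$ of its brick produces an $e\cdots\be$ or a $b\cdots\bb$ factor with no admissible letter in between, contradicting~(5) or~(4), and symmetrically no $e$ or $\bb$ may be slipped into a $\w3$-brick. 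Every brick therefore appears as a contiguous block, and the block order is dictated by the positions of the markers in $\w2$; this is exactly the interleaving that defines $w$, so $w'=w$. The main obstacle is the bookkeeping of the third paragraph: matching the strict non-intersection of the paths, with its $\pm 1$ translations and the trimmed boundary letters supplied by Claim~\ref{claim:trip-word}, to the precise shifted prefix inequalities required by the Dyck conditions.
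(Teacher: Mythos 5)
Your proposal is correct and follows essentially the same route as the paper's proof: compatibility and Conditions~(1), (4), (5) by inspection of the brick structure, Conditions~(2)--(3) by reducing the Dyck prefix inequality at the brick boundaries to the non-intersection of the corresponding pair of paths, and uniqueness by showing that Conditions~(4)--(5) force each brick to appear as a contiguous factor in the order dictated by \w2. The only differences are ones of emphasis: your uniqueness argument is spelled out in more detail than the paper's one-line version, while the paper makes explicit the prefix inequality $1+\sum_{i\le k_1}|\w1_{\sss{[i]}}|_{e}\geq\sum_{i\le k}|\w2_{i}|_{\be}$ and its geometric reading that you defer to ``bookkeeping.''
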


\begin{proof}
  First observe that $w$ is compatible with $p$ and that no other such
  word may satisfy the conditions of Lemma~\ref{lem:Tbip}:
  Conditions~(4) and~(5) imply that the factors $\w1_{\sss{[k]}}$ and
  $\w3_{\sss{[\ell]}}$ have to be factors of $w$, and the order in
  which they appear is completely determined by $\w2$.

  Let us now prove that $w$ encodes indeed an element of $\Tbip$, by
  applying Lemma~\ref{lem:Tbip}.  Condition~(1) is clearly
  satisfied. Conditions~(4) and~(5) follow also easily from the definition
  of the decomposition in factors of $\w1$ and $\w3$: observe for
  instance, for Condition~(4), that any occurrence of $b$ in $w$ (but
  the first one) comes from a factor $\w3_{\sss{[k]}}$ of the form
  $b^{\star}\be$.  The first occurrence of $b$ does not either raise a
  problem since $\w2_{1}=\be$ and is hence replaced by
  $\w3_{\sss{[1]}}$.

  It remains to prove Conditions~(2) and~(3), namely that $w_{|e,\be}$ and
  $w_{|b,\bb}$ are Dyck words. We only give the proof for
  $w_{|e,\be}$, since both proofs work along the same lines.  From the
  construction of $w$, the number of occurrences of $e$ and of $\be$
  in $w$ are both equal to $i+1$, hence it is enough to prove that
  $|w_{\sss\leq k}|_{e}\geq |w_{\sss\leq k}|_{\be}$, for all $k$.  We
  consider the following decomposition of $w$ into product of factors:
  \[
  w = eb\cdot w_{\sss{[1]}}\cdot w_{\sss{[2]}}\cdot \ldots\cdot
  w_{\sss{[i+j+2]}},
  \]
  where each of the $w_{\sss{[k]}}$ is equal to the corresponding
  factor of $\w1$ or of $\w3$. It is then enough to check that for
  each $1\leq k \leq i+j+2$:
  \[
  \Big| e b \prod_{i=1}^k w_{\sss{[i]}}\Big|_{e} \geq \Big| eb
  \prod_{i=1}^kw_{\sss{[i]}}\Big|_{\be}, 
  \]
  which can be rewritten as: 
  \[
  1+ \sum_{i=1}^{k_1}|\w1_{\sss{[i]}}|_{e} \geq \sum_{i=1}^k|\w2_{i}|_{\be},
  \quad \text{ where } k_1 = |\w2_{\sss{\leq k}}|_{\bb}.
  \]
  Let $(x_2,y_2)$ be the point of $\p2$ reached after $k$ steps and
  let $(x_2,y_1)$ be the point of $\p1$ of abscissa $x_2$ with minimal
  ordinate. By construction of $w$, the value of $y_1$ is equal to the
  left-hand side of the above inequality, while $y_2$ is equal to the
  right hand-side. Since $\p1$ lies above $\p2$, we obtain the desired
  result.
\end{proof}

\section{Blossoming trees for $d$-angulations}\label{sec:dangulations}
The aim of this section is to generalize bijections previously
obtained for simple triangulations~\cite{PouSch06} and simple
quadrangulations~\cite{FusyThesis}, that is triangulations and
quadrangulations without loops nor multiple edges. In other words,
triangulations and quadrangulations in which the contours of the faces
are shortest cycles. More generally, the \emph{girth} of a map is
defined as the minimal length of its cycles. Obviously a
$d$-angulation has girth at most $d$ (except if it is a tree), hence
simple triangulations and simple quadrangulations are exactly
triangulations and quadrangulations with maximal girth. In the
remaining sections, we aim at applying the general scheme to
$d$-angulations of girth $d$, for any $d\geq 3$, and also to their
following generalization. For any integers $p\geq d\geq 3$, define a
\emph{$d$-angulation of a $p$-gon} or a \emph{$p$-gonal
  $d$-angulation} as a face-rooted plane map such that the contour of
the root face is a simple cycle of length $p$ and all non-root faces
have degree $d$, see Fig.~\ref{fig:pgonal}. We denote respectively
$\Mdd$ and $\Mdp$ the set of $d$-angulations and $p$-gonal
$d$-angulations of girth $d$, with distinct root and outer faces.

We do not use here the canonical plane embedding of face-rooted maps
with the root face as the outer face. On the contrary, from now on, we
consider only \emph{face-rooted plane maps in which the outer face and
  root face are different}. This convention yields obviously
equivalent enumerative byproducts but proves to fit better.

\begin{figure}[t]
  \centering
  \subfigure[A simple quadrangulation endowed with its minimal
  2-orientation.]{%
    \quad\includegraphics[scale=0.9, page=1]{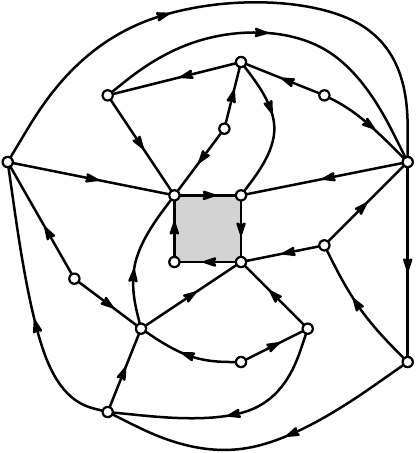}\quad}
  \qquad\qquad
  \subfigure[A pentagulation of girth $5$ endowed with its minimal
  $5/3$-orientation.\label{fig:oriPent}]{%
    \includegraphics[scale=0.95, page=1]{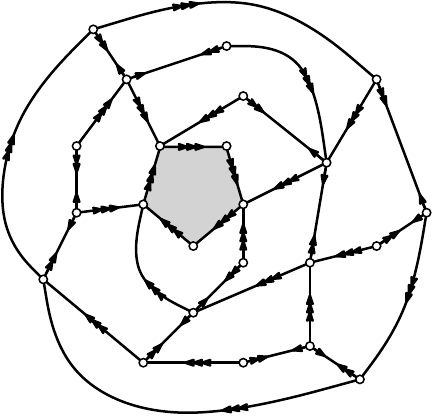}}
  \caption{Examples of $\frac{d}{d-2}$ orientations for
    $d$-angulations of girth~$d$.}
  \label{fig:exorientation}
\end{figure}

\subsection{Orientations for $p$-gonal $d$-angulations}
For any $j,k\geq 0$, a $j/k$-orientation of a face-rooted map is
defined as a $k$-fractional orientation such that for each root vertex
$v$, $\out(v)=k$, and $\out(v)=j$ otherwise (see
Fig.~\ref{fig:oriPent}). Bernardi and Fusy show
in~\cite{BerFusPentagulation} that the existence of \ddori{s}
characterizes $d$-angulations of girth $d$, generalizing previous
results obtained for triangulations \cite{Schnyder} and
quadrangulations \cite{OssonaThesis}:

\begin{thm}[Schnyder~\cite{Schnyder}, Ossona de
  Mendez~\cite{OssonaThesis}, Bernardi and
  Fusy~\cite{BerFusPentagulation}]
  \label{thm:girthd}
  Let $d\geq 3$ and $M$ be a face-rooted $d$-angulation; then $M$
  admits a \ddori if and only if it has girth $d$.  Moreover any such
  orientation is accessible.

  Besides if $d$ is even, all the flows in this orientation are even.
\end{thm}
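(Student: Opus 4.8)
The plan is to read this statement through the theory of $\alpha$-orientations, using the classical feasibility criterion behind Theorem~\ref{th:Felsner}: an orientation of a (multi)graph with prescribed outdegrees $\alpha$ exists if and only if $\sum_v\alpha(v)$ equals the number of edges and $\sum_{v\in S}\alpha(v)\ge |E(S)|$ for every vertex set $S$, where $E(S)$ is the set of edges with both ends in $S$ (see~\cite{Felsner04}). I would work throughout in the $(d-2)$-expanded map, so that a \ddori becomes an ordinary orientation of a multigraph in which every non-root vertex has $\out(v)=d$ and each of the $d$ root vertices has $\out(v)=d-2$. The only quantitative ingredient needed is the planar Euler inequality for high girth: any plane subgraph of girth at least $d$ containing a cycle satisfies $|E|\le\frac{d}{d-2}(n-2)$, and a disk whose bounded faces are all $d$-gons and whose boundary cycle has length $\ell$ has exactly $\frac{d(n-1)-\ell}{d-2}$ edges.

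For the direction \emph{girth $d$ $\Rightarrow$ existence}, the global equality $\sum_v\out(v)=(d-2)|E|$ is forced by Euler's formula and $dF=2E$ together with there being exactly $d$ root vertices, and it is insensitive to the girth. The content lies in the subset inequalities: for a subgraph $S$ carrying a cycle the girth bound gives $(d-2)|E(S)|\le d(|S|-2)$, while the expanded budget is $d|S|-2r\ge d|S|-2d=d(|S|-2)$, since the number $r$ of root vertices in $S$ is at most $d$; acyclic $S$ are immediate. Conversely, for \emph{existence $\Rightarrow$ girth $d$} I would argue by contraposition. A cycle $C$ of length $\ell<d$ splits the sphere into two disks; let $H$ be the closed disk carrying the whole root face. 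Then $H$ contains all $d$ low-outdegree root vertices, so its expanded budget equals $dn-2d$, whereas the disk identity gives $(d-2)|E(H)|=d(n-1)-\ell$; as $\ell<d$ we get $dn-2d<d(n-1)-\ell$, i.e. the Hall condition fails on $H$ and no \ddori can exist.

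Accessibility would come from the same inequality in strict form. If a nonempty set $U$ of vertices could reach no root vertex, then no oriented copy leaves $U$, so $\sum_{v\in U}\out(v)=(d-2)|E(U)|$; as $U$ avoids all root vertices this reads $d|U|=(d-2)|E(U)|$, forcing $|E(U)|=\frac{d}{d-2}|U|$ and contradicting the strict bound $|E(U)|\le\frac{d}{d-2}(|U|-2)$. Hence $U=\emptyset$ and the orientation is accessible. For the last claim, I would use that an even $d$-angulation is bipartite (a connected plane map is bipartite exactly when all its faces have even degree); fixing the $2$-colouring and using minimality, a parity/flip argument shows that each edge carries an even number of copies in either direction, which is precisely the assertion that all flows are even.

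I expect the main obstacle to be the bookkeeping of the Euler/girth inequality with its boundary term: one must arrange the count of root vertices and the length of the enclosing cycle so that the inequality is tight for the whole map yet fails exactly on the root-containing disk of a short cycle. Once that inequality is in place with the right constants, the feasibility criterion, the accessibility argument and the bipartite parity step become fairly mechanical applications.
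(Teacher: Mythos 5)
First, note that the paper does not prove Theorem~\ref{thm:girthd}: it is imported from Schnyder, Ossona de Mendez and Bernardi--Fusy, so there is no internal proof to compare against. On its own merits, your treatment of the equivalence and of accessibility is correct and is essentially the standard argument from the cited sources: pass to the $(d-2)$-expanded map, check Felsner's feasibility criterion ($\sum_v\alpha(v)=|E|$ plus $\sum_{v\in S}\alpha(v)\ge|E(S)|$ for all $S$), and feed it the Euler/girth inequality $|E(S)|\le\frac{d}{d-2}(|S|-2)$; the contrapositive via the root-containing disk of a short cycle and the accessibility argument via the set $U$ of vertices that cannot reach the root face both check out (you should still say a word about disconnected $S$, and about why the root face is bounded by a simple cycle so that there are exactly $d$ root vertices, but these are routine).

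The genuine gap is the last assertion, on the parity of flows when $d$ is even. Your sentence ``a parity/flip argument shows that each edge carries an even number of copies in either direction'' is a placeholder, not an argument, and the claim needs more care than you allow for: read as a statement about \emph{every} \ddori it is false. Indeed, start from an orientation with all flows even and reverse a single directed simple cycle of the expanded multigraph (such cycles exist in general); this preserves all outdegrees, hence yields another valid \ddori, but every edge of that cycle now has odd flow. So the statement can only concern a particular orientation (the minimal one) or the existence of one with even flows, and the appeal to ``minimality'' in your sketch is doing real work that is never carried out. The clean repair, consistent with the rest of your approach, is not a bipartite/flip argument at all: run the identical feasibility computation on the $(\tfrac d2-1)$-expanded map with prescribed outdegrees $\tfrac d2$ at non-root vertices and $\tfrac d2-1$ at root vertices (every equality and inequality you established scales exactly by the factor $\tfrac12$), obtain a $\frac{d/2}{d/2-1}$-orientation, and double it to get a \ddori in which all flows are even. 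This also directly justifies the Remark following the theorem.
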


\begin{rem}
  For $d$ even, the parity of the flows implies that maps of $\Mdd$
  can be endowed with $\frac{d/2}{d/2-1}$-orientations: for instance
  quadrangulations are naturally endowed with 2-orientations.  For
  sake of conciseness we work here mainly with \ddori{s} and
  distinguish odd and even cases only when needed.
\end{rem}

For $p>d$, a simple application of Euler formula proves that a
$p$-gonal $d$-angulation cannot admit a \ddori.  The appropriate
generalization is to define a \emph{(pseudo-) \ddori} as a
$(d-2)$-fractional orientation in which the contour of the root face
is a circuit of saturated edges and $\out(v)=d$ for any non-root
vertex $v$.  Observe that for $p=d$, minimal pseudo-\ddori{s} are
minimal \ddori{s}.  Pseudo-\ddori{s} characterize $p$-gonal
$d$-angulations of girth $d$:
\smallskip

\def\toto{\cite[Lemma~18]{BerFusPentagulation}}
\begin{prop}[\toto]
  Let $p\geq d\geq 3$ be integers. A $p$-gonal $d$-angulation $M$
  admits a pseudo-\ddori if and only if it has girth $d$. In this case
  every pseudo-\ddori is accessible, and the sum of the outdegrees of
  the root vertices is equal to $(d-2)p+(p-d)$.

  Besides, there exists a unique minimal such pseudo-\ddori.
\end{prop}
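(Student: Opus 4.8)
The plan is to prove this proposition by reducing pseudo-\ddori{s} on a $p$-gonal $d$-angulation $M$ to genuine \ddori{s} on a related $d$-angulation, so that I can invoke Theorem~\ref{thm:girthd} together with Felsner's uniqueness result (Theorem~\ref{th:Felsner}). The natural idea is the following gluing construction: starting from the $p$-gonal $d$-angulation $M$ with root face a simple $p$-cycle $C$, build an auxiliary map $\widehat M$ by closing the root face with additional structure (for instance, attaching a suitable decoration that turns the $p$-gon into $d$-gonal faces). The point is to arrange $\widehat M$ so that a \ddori on $\widehat M$ restricts to a pseudo-\ddori on $M$ in which the contour of $C$ is saturated, and vice versa, with outdegrees matching up. Once such a correspondence is set up, every qualitative statement (existence iff girth $d$, accessibility, uniqueness of the minimal orientation) transfers directly.

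\emph{Existence and the girth criterion.} First I would establish the equivalence ``$M$ has girth $d$ $\iff$ $M$ admits a pseudo-\ddori''. The forward implication should follow from the auxiliary construction: if $M$ has girth $d$, then $\widehat M$ has girth $d$ as well (the gluing cannot create short cycles because $C$ is a simple $p$-cycle and the attached faces have degree $d$), so by Theorem~\ref{thm:girthd} it admits a \ddori, which restricts to the desired pseudo-\ddori on $M$. Conversely, if $M$ admits a pseudo-\ddori, then a cycle of length less than $d$ in $M$ would, via the $(d-2)$-fractional outdegree condition $\out(v)=d$ at non-root vertices, force a counterclockwise or accessibility violation exactly as in the proof of Theorem~\ref{thm:girthd}; alternatively, transporting the orientation to $\widehat M$ contradicts the girth-$d$ direction there. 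I would lift the argument of Bernardi and Fusy rather than reprove it, since the proposition is attributed to \toto.

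\emph{Accessibility and the outdegree sum.} Accessibility of every pseudo-\ddori follows from accessibility on $\widehat M$ (Theorem~\ref{thm:girthd}) by noting that the saturated contour of $C$ makes the root vertices mutually accessible and that paths into $C$ survive the restriction. The numerical identity on the outdegrees of the root vertices is a bookkeeping computation: summing $\out(v)=d$ over all non-root vertices, using that each non-root inner face of $\widehat M$ (or $M$) has degree $d$, and applying Euler's relation together with the fact that the $p$ contour edges are saturated contributes a precise correction term, yielding $(d-2)p+(p-d)$. I would treat this as a routine flow-counting argument and not grind through the arithmetic here.

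\emph{Uniqueness of the minimal pseudo-\ddori.} This is where I expect the main subtlety, and I would handle it via Felsner's theorem. The set of pseudo-\ddori{s} of $M$ is, through the correspondence, in bijection with a set of $\alpha$-orientations of $\widehat M$ (in its $(d-2)$-expanded version) for a fixed feasible outdegree function $\alpha$; the constraint that the contour of $C$ be a circuit of saturated edges is automatically enforced because, in the minimal orientation, $C$ bounds the root face and cannot carry a counterclockwise cycle. By Theorem~\ref{th:Felsner} there is a unique orientation without counterclockwise cycles, i.e.\ a unique minimal element, and I must check that this minimal orientation indeed satisfies the saturation condition on $C$ (rather than partially orienting some contour edge the other way). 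The hard part will be verifying that minimality is compatible with, and in fact forces, the prescribed saturated circuit on the root contour: I would argue that any non-saturated or reversed contour edge would either produce a counterclockwise cycle along $C$ or violate the accessibility just established, so the unique minimal $\alpha$-orientation is precisely the unique minimal pseudo-\ddori. This pins down existence and uniqueness simultaneously and completes the proof.
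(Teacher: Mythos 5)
First, a point of reference: the paper does not prove this proposition at all --- it is imported as \toto and used as a black box (the remark that follows it only discusses how it fits the scheme when minimality and accessibility are measured with respect to different faces). So there is no internal proof to compare against, and your sketch has to stand on its own. It does not, for two concrete reasons. The central device, the auxiliary $d$-angulation $\widehat M$ obtained by filling the root $p$-gon with $d$-gonal faces, is never constructed, and in general it cannot be: writing $v_i$, $e_i$, $f$ for the interior vertices, edges and faces of such a filling, Euler's relation gives $f=e_i-v_i+1$ and the degree count gives $df=2e_i+p$, whence $(d-2)e_i=p-d+dv_i$ and so $2v_i\equiv d-p \pmod{d-2}$. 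For $d$ even this forces $p\equiv d\pmod 2$, so for instance a quadrangulation of a pentagon ($d=4$, $p=5$) admits no such filling, and the whole reduction collapses exactly in cases the proposition must cover. Even when a filling exists, the outdegrees do not line up: a genuine \ddori of $\widehat M$ gives each root vertex expanded outdegree $d$, total $dp$, whereas the proposition requires the total within $M$ to be $(d-2)p+(p-d)=dp-p-d$; your construction would therefore have to route exactly $p+d$ units of outflow into the filled region, a constraint you never impose or verify, on top of ensuring the filling creates no cycle of length less than $d$.

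The second gap concerns uniqueness. Theorem~\ref{th:Felsner} gives a unique minimal orientation \emph{within one $\alpha$-class}, i.e.\ when the outdegree of every vertex is prescribed. A pseudo-\ddori prescribes $\out(v)=d$ only at non-root vertices and asks that the contour be a saturated circuit; the individual outdegrees of the $p$ root vertices are not fixed, only their sum. The set of pseudo-{\ddori}s is therefore a priori a union of several $\alpha$-classes, and the existence of a unique global minimum does not follow from Theorem~\ref{th:Felsner} as you invoke it. One has to either show that the root outdegrees are in fact determined, or pass to a formulation where only the total matters (compare the contraction of the root face used in Corollary~\ref{cor:open}), and separately check that the minimal element of the resulting lattice keeps the contour a saturated clockwise circuit. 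Your last paragraph gestures at this final compatibility check, which is the right instinct, but the preceding identification of pseudo-{\ddori}s with a single $\alpha$-class of $\widehat M$ is precisely what is missing. The accessibility claim and the bookkeeping for the outdegree sum are plausible as stated, but they too are routed through the nonexistent $\widehat M$.
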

\smallskip

\begin{rem} Since we allow accessibility and minimality to be defined
  relatively to two different faces (the root face and the outer
  face), this minimal pseudo-\ddori fits in our
  scheme. In~\cite{BerFusPentagulation} however, accessibility and
  minimality have to be defined relatively to the same face. It
  implies \cite[Proposition 19]{BerFusPentagulation} that the
  existence of a minimal suitable orientation is conditioned on the
  map being non-separated (\ie there cannot exist a cycle of girth
  length that separates the root face and the outer face).
\end{rem}
\smallskip

\begin{figure}[t]
  \centering
  \subfigure[A $7$-gonal $5$-angulation of girth 5 endowed with its minimal
  pseudo $\frac{5}{3}$-orientation,\label{fig:pgonal}]{%
    \;\includegraphics[scale=1, page=1]{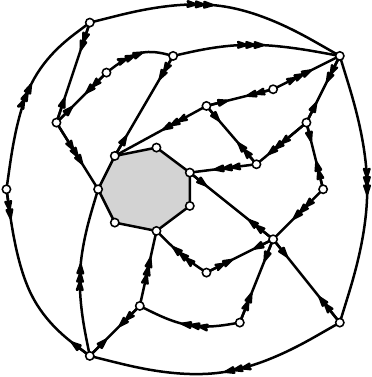}\;}
  \qquad\qquad
  \subfigure[and the corresponding $7$-gonal $5$-fractional
  forest.\label{fig:cyclicforest}]{%
    \includegraphics[scale=1, page=3]{7-pentagulation}}
  \caption{Example of the correspondence between $p$-gonal $d$-angulations of
    girth $d$ and $p$-gonal $d$-fractional forests.}
\end{figure}

\subsection{Bijection for $p$-gonal $d$-angulations}

To adapt Theorem~\ref{thm:open} to our context, we need to introduce
the following family of planar maps akin to forests of trees. A
\emph{$p$-cyclic forest} is a face-rooted plane map with two faces,
the root one and the outer one, such that the border of the root face
is a simple cycle of length $p$. Observe that a cyclic forest is
nothing else but a cycle of sequences of planted trees.


\begin{cor}\label{cor:open}
  Let $M=(V,E)$ be a plane face-rooted map with distinct root and
  outer faces and endowed with a minimal accessible orientation
  $O$. Then $M$ admits a unique edge-partition $(\TT M,\CC M)$ such
  that:

  \begin{itemize}
  \item edges in $\TT M$ form a spanning cyclic forest of $M$ with same
    root-face as $M$, on which the restriction of $O$ is accessible;
  \item edges in $\CC M$ are saturated edges, and any of them turns
    clockwise around the unique cycle it forms with edges in $\TT M$.
  \end{itemize}
\end{cor}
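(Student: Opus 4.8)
The plan is to reduce Corollary~\ref{cor:open} to Theorem~\ref{thm:open} by a standard contraction/puncturing trick that collapses the cyclic structure of the root face into a single vertex, thereby converting a face-rooted map into a vertex-rooted one to which the already-established theorem applies. Concretely, I would first contract the entire boundary cycle of the root face into one vertex $r$ (equivalently, glue the $p$ root vertices together), obtaining an auxiliary plane map $M'$ that is vertex-rooted at $r$. One must check that under the given orientation $O$, this operation produces a well-defined minimal accessible orientation on $M'$: accessibility transfers because every vertex reaches some root vertex of $M$, all of which become $r$; and minimality is preserved because the root-face boundary is a circuit of saturated edges in $O$, so contracting it creates no new counterclockwise cycle. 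The auxiliary map $M'$ then has one fewer face than $M$ (the root face disappears), which is exactly the setting of Theorem~\ref{thm:open}.

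The second step is to apply Theorem~\ref{thm:open} to $(M',r,O')$ to obtain its unique \TCP{} $(\TT{M'},\CC{M'})$, where $\TT{M'}$ is a spanning tree of $M'$ rooted at $r$ on which $O'$ is accessible, and every edge of $\CC{M'}$ is a saturated clockwise chord. I would then pull this partition back to $M$. The preimage of the spanning tree $\TT{M'}$ under the de-contraction is precisely a spanning \emph{cyclic forest} of $M$: decontracting $r$ back into the length-$p$ cycle turns the single root vertex into the $p$-gon boundary, so the tree edges together with the (saturated) boundary cycle form a cycle of sequences of planted trees, matching the definition of a $p$-cyclic forest given just above the statement. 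The closure edges $\CC M$ are simply the edges of $\CC{M'}$, which remain saturated, and each still turns clockwise around the unique cycle it forms with the forest edges, since decontraction does not alter the embedding locally around these chords.

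The third step is uniqueness, which I would likewise inherit from Theorem~\ref{thm:open}: any \TCP{} of $M$ in the sense of the corollary, when pushed forward through the contraction, yields a \TCP{} of $M'$ in the sense of Theorem~\ref{thm:open}, and since the latter is unique, so is the former. Here one must verify that the correspondence between partitions of $E(M)$ satisfying the corollary's two bullets and partitions of $E(M')$ satisfying the theorem's two bullets is a genuine bijection — in particular that the root-face boundary edges are \emph{forced} into $\TT M$ (they cannot be closure edges, being the saturated circuit that is contracted away and hence carrying no chord structure).

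The main obstacle I anticipate is making the contraction argument fully rigorous at the level of orientations and embeddings: one must be careful that identifying the $p$ boundary vertices into a single $r$ does not inadvertently create counterclockwise cycles or destroy planarity in a way that breaks minimality, and that the clockwise-turning condition for closure edges is genuinely preserved under decontraction. A cleaner alternative, which I would pursue if the contraction bookkeeping becomes delicate, is to reprove the corollary directly by induction on the number of inner faces, mimicking the proof of Theorem~\ref{thm:open} almost verbatim — removing at each step a saturated non-bridge outer edge with the outer face strictly on its left — but treating the root-face boundary as a distinguished saturated circuit that is never removed and never enters $\CC M$. This direct approach avoids the contraction subtleties at the cost of repeating the existence-and-uniqueness argument, and the uniqueness half (the infinite-descent argument on a path $\gamma$ that cannot wrap a left-bounded edge) should carry over essentially unchanged.
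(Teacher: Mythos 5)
Your primary plan — contract the root face to a single vertex $r$, transfer the minimal accessible orientation, apply Theorem~\ref{thm:open} to the resulting vertex-rooted map, and pull the tree-and-closure partition back (with the root-face boundary forced into $\TT M$) — is exactly the paper's proof, which consists of this contraction reduction stated in one sentence. Your additional checks on accessibility, minimality, and the bijection between partitions, as well as the fallback direct induction, are elaborations of the same argument rather than a different route.
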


\begin{proof}
  The map $M$ admits such a unique partition of its edges if and
  only if $\tilde{M}$ does, where $\tilde{M}$ is the vertex-rooted map
  constructed from $M$ by contracting its root face. We conclude by applying Theorem~\ref{thm:open} to $\tilde M$.
\end{proof}  

The two following definitions describe the corresponding blossoming
objects in the setting of $p$-gonal $d$-angulations (see
Fig.~\ref{fig:pgonal}).

For any integers $d\geq 3$ and $0\leq i <d-2$, a \emph{$d$-fractional
  tree of excess $i$} is a planted blossoming tree endowed with an
accessible $(d-2)$-fractional orientation such that the root leaf has
outdegree $i$ and each non-root vertex has outdegree $\out(u)=d$,
where each opening stem contributes $d-2$. The set of $d$-fractional
trees of excess $i$ (resp. with $n$ vertices) is denoted by $\Tdn{i}$
(resp. $\Tdn i_n $).

A \emph{$p$-gonal $d$-fractional forest} is a $p$-cyclic forest, the
planted trees of which are $d$-fractional trees. The sum of their
excesses is moreover required to be equal to $p-d$. Observe that such
a forest is naturally endowed with a pseudo-\ddori.  The set of
$p$-gonal $d$-fractional forests (resp. with $n$ vertices) is denoted
by $\Fdp$ (resp. $\Fdp(n)$). The closure of a $p$-gonal $d$-fractional
forest is the natural counterpart of the closure of a blossoming map, in
which the local closure of an opening stem creates a face of
degree~$d$.

The main theorem of this section is the following application of
Corollary~\ref{cor:blossom} to $p$-gonal $d$-angulations.

\begin{thm}\label{thm:pseudo}
  There exists a one-to-one constructive correspondence between
  $p$-gonal $d$-fractional forests with $n$ vertices and $p$-gonal
  $d$-angulations of girth $d$ with $n$ non-root vertices.
\end{thm}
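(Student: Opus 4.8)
The plan is to derive Theorem~\ref{thm:pseudo} as a direct application of the closure/opening machinery already developed, specifically Corollary~\ref{cor:blossom} and its face-rooted variant Corollary~\ref{cor:open}, together with the orientation-theoretic characterization of $p$-gonal $d$-angulations of girth $d$ provided by the cited proposition on pseudo-\ddori{s}. The overall strategy is to exhibit two mutually inverse maps: the \emph{opening}, which sends a $p$-gonal $d$-angulation of girth $d$ (with its canonical minimal orientation) to a $p$-gonal $d$-fractional forest, and the \emph{closure}, which goes the other way. The crux is that the $(d-2)$-fractional orientation language lets us treat everything exactly as in the non-fractional case, so the heavy lifting is already done by Corollary~\ref{cor:open}.

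First I would set up the forward direction. Let $M$ be a $p$-gonal $d$-angulation of girth $d$ with $n$ non-root vertices. By the proposition on pseudo-\ddori{s}, $M$ admits a unique minimal accessible pseudo-\ddori; since this orientation is minimal with respect to the outer face and accessible with respect to the root face, the hypotheses of Corollary~\ref{cor:open} are met (this is precisely the point emphasized in the remark following that proposition, where the authors stress that accessibility and minimality may be defined relative to two different faces). Applying Corollary~\ref{cor:open} yields a unique \TCP $(\TT M, \CC M)$ where $\TT M$ is a spanning $p$-cyclic forest on which $O$ restricts accessibly, and each closure edge is saturated and turns clockwise around its fundamental cycle. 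Cutting each closure edge into an opening-closing stem pair, exactly as in the proof of Corollary~\ref{cor:blossom}, produces a blossoming $p$-cyclic forest. I would then verify that this is a genuine $p$-gonal $d$-fractional forest: each non-root vertex inherits $\out(u)=d$ from the orientation (so each planted tree is a $d$-fractional tree of the appropriate excess), and the sum of excesses equals $p-d$ because the proposition guarantees the total outdegree of the root vertices equals $(d-2)p+(p-d)$, of which $(d-2)p$ is absorbed by the $p$ saturated cycle edges, leaving $p-d$ for the excesses.

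Next I would treat the backward direction. Starting from a $p$-gonal $d$-fractional forest $F$, its natural pseudo-\ddori{} is accessible and minimal (minimality because, as observed in Section~\ref{sec:blossom}, closures are performed clockwise and hence create no counterclockwise cycle, and the forest itself has none). Taking the closure $\overline F$, I would invoke the structural preservation facts established after Definition~\ref{def:closure}: in- and out-degrees are preserved, minimality and accessibility survive, and each local closure creates a face of degree $d$. The resulting map therefore has all non-root faces of degree $d$ and is endowed with a minimal accessible pseudo-\ddori, so by the characterization proposition it is a $p$-gonal $d$-angulation of girth $d$. That the two constructions are inverse to each other follows from the uniqueness clause in Corollary~\ref{cor:open}: the opened forest is the unique \TCP of $\overline F$, so closing then opening returns $F$, and opening then closing returns $M$.

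I expect the main obstacle to be the bookkeeping around degrees and excesses rather than any genuinely deep argument, since the bijective heart is inherited wholesale from Corollary~\ref{cor:open}. Concretely, the delicate point is checking that the fractional outdegree condition $\out(u)=d$ (with each opening stem contributing $d-2$) is exactly matched on both sides of the closure, and that the excess of each planted tree corresponds correctly to the non-saturated outflow of its root vertex into the cyclic part; one must confirm that the $p$ edges of the root cycle are saturated and contribute $(d-2)p$ to the root outdegree, leaving the residual $p-d$ distributed as excesses. A secondary subtlety worth spelling out is the equivalence between the face-rooted map $M$ and the vertex-rooted map $\tilde M$ obtained by contracting the root face, which is the device used in the proof of Corollary~\ref{cor:open} to reduce to Theorem~\ref{thm:open}; I would make sure the correspondence between cyclic forests of $M$ and spanning trees of $\tilde M$ is compatible with the $d$-angulation face-degree condition under this contraction.
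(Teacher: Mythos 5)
Your overall architecture is the same as the paper's: characterize the maps by pseudo-{\ddori}s, open via Corollary~\ref{cor:open} and Corollary~\ref{cor:blossom}, and use the uniqueness of the \TCP to see that closure and opening are mutually inverse. The forward direction and your accounting of the excesses are fine. However, there is a genuine gap in the backward direction, and it sits exactly at the step the paper singles out as ``the only point to check''. From ``each local closure creates a face of degree $d$'' you conclude that ``the resulting map therefore has all non-root faces of degree $d$''. This does not follow: local closures only account for the bounded non-root faces. Under the paper's convention the root face and the outer face are distinct, so the \emph{outer} face of the final object is itself a non-root face that must end up with degree $d$, and it is not created by any local closure. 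Nothing in your argument rules out that the closing process terminates with unmatched opening stems (in which case the closure is not even a map), or with an outer face of degree larger than $d$. The missing ingredient is a counting argument: a $d$-fractional tree of excess $i$ with $k$ vertices carries $(2k+i)/(d-2)$ opening stems, so a $p$-gonal $d$-fractional forest with $n+p$ vertices carries $(2n+p-d)/(d-2)$ opening stems while its outer face has degree $2n+p$; since each local closure decreases that degree by $d-2$, once all stems are consumed the outer face has degree exactly $d$ (and the same count shows that a closable stem is always available while any stem remains). Without this, the claim that the closure of an arbitrary forest lies in \Mdp is unproved.

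A secondary, smaller issue: the objects in the statement are $p$-gonal $d$-fractional forests, which by definition carry only opening stems (closing stems are redundant once face degrees are prescribed, as the paper notes when discussing simple triangulations and quadrangulations). Your opening produces a forest still decorated with the closing stems inherited from cutting the edges of $\CC M$; to land in the set actually being counted you must delete them and observe that they are recoverable from the condition that non-root faces have degree $d$. As stated, the two sides of your correspondence are not literally the sets of the theorem.
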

 
\begin{proof}
  Corollary~\ref{cor:open} and~\ref{cor:blossom} entail that a
  $p$-gonal $d$-angulation endowed with its minimal \ddori can
  uniquely be opened into a $p$-cyclic $d$-fractional forest with
  additional closing stems. Deleting them yields a $p$-cyclic
  $d$-fractional forest, from which they can be retrieved using the
  condition that non-root faces have degree $d$.

  The only point to check is that the closure of a $p$-gonal
  $d$-fractional forest is indeed a $p$-gonal $d$-angulation, \ie that
  the degree of the outer face of the full closure is~$d$.  A
  $d$-fractional tree of excess $i$ with $k$ vertices has
  $\frac{2k+i}{d-2}$ opening stems. So a $p$-gonal $d$-fractional
  forest with $n+p$ vertices has $\frac{2n+p-d}{d-2}$ opening stems
  and its outer face has degree $2n+p$. Since each local closure
  reduces this degree by $d-2$, it is exactly equal to $d$ at the end
  of the closing process.

  As mentioned in Section~\ref{sec:effective}, the complexity of the
  opening procedure in the general case is quadratic. For $p$-gonal
  $d$-angulations, we were able to find a linear constructive
  algorithm. Its description is postponed to Section~\ref{sec:opening}
  for sake of clarity.
\end{proof}

\subsection{Enumerative consequences}\label{sec:computation}

By Theorem~\ref{thm:pseudo}, the enumeration of $d$-angulations of
girth $d$ boils down to the enumeration of $d$-fractional forests and
the latter reduces to the counting of $d$-fractional trees of excess
$i$ for $i\in\{0,\ldots,d-3\}$. Let $\tdi i$ be the generating series
of $\Tdn i$ according to the number of opening stems. The only way to
generate and enumerate the elements of $\Tdn i _ n$ seems to require a
recursive approach, described in this section. It comes as no surprise
that the recursive scheme we obtain is essentially the same as the one
that counts the $d$-regular mobiles of \cite{BerFusPentagulation}. We
found it nevertheless interesting to note that the same enumerative
results can be obtained by our approach.

Following \cite{BerFusPentagulation}, for any positive integer $j$,
let $h_j$ be the polynomial in the variables $t_1,t_2,\ldots$ defined
by:
\[
h_j(t_1,t_2,\ldots) :=
[x^j]\frac{1}{1-\sum_{i>0}x^it_i} = \sum_{r>0}
\sum_{\substack{i_1,\ldots,i_r>0\\i_1+\ldots+i_r=j}} t_{i_1} \ldots t_{i_r}.
\] 
In other terms, $h_j$ is the generating function of compositions of
$j$ where the variable $t_i$ keeps track of the number of parts of
size $i$.  Now, the generating series of the fractional trees of
excess $i$ can be obtained by a recursive decomposition. Cut the
child of the root leaf to obtain a forest of trees (where one of the
tree can be reduced to an opening stem) such that the sum of their
excesses is equal to $i+2$. In other words it can be written as a
sequence $(s_0,t_1,s_1,\ldots,t_l,s_l)$, where $s_0,\ldots,s_l$ are
some sequences of trees with excess 0 (possibly reduced to a single
vertex) and each of the $t_i$'s is a tree with positive excess. It
yields the following system of equations:
\begin{equation}\label{eq:schemarecursif}
  \tdi i (x) = \frac{1}{1-\tdi 0}\cdot
    h_{i+2}\big(\frac{\tdi 1}{1-\tdi 0},\ldots,\frac{\tdi
    {d-3}}{1-\tdi {0}},\frac{x}{1-\tdi {0}}\big),
\end{equation}
for $0\leq i \leq d-3$ and $\tdi i = 0$ otherwise.  This set of
equations characterizes $\tdi 0, \tdi 1,\ldots, \tdi {d-3}$ as formal
power series. The constant coefficient of all
these series is clearly equal to zero and the other coefficients can
be computed recursively.

\begin{prop}[Bernardi and Fusy~\cite{BerFusPentagulation}]
  \label{prop:enubord}
  For $p\geq d\geq 3$, the generating function $M_{d,p}(x)$ of
  corner-rooted $p$-gonal $d$-angulations of girth $d$ with a marked
  outer face and counted according to the number of non-root faces is
  equal to:
  \[
  M_{d,p}(x)=x\left(\frac{1}{1-T_0}\right)^p\cdot\hdp {p-d}
  {p}\left(\frac{T_1}{1-T_0},\ldots,\frac{T_{d-3}}{1-T_0},\frac{x}{1-T_0}\right),
  \]
  where $\hdp j p$ is defined in~\eref{eq:hdp} and the series
  $T_0,\ldots,T_{d-3}$ are characterized by~\eref{eq:schemarecursif}.

For $p=d$, this equation reduces to: 
\[
  M_{d,d}(x)=x\left(\frac{1}{1-T_0}\right)^d
\]
\end{prop}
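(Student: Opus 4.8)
The plan is to combine the bijection of Theorem~\ref{thm:pseudo} with a generating-function decomposition of $p$-gonal $d$-fractional forests. By that bijection, enumerating corner-rooted $p$-gonal $d$-angulations of girth $d$ reduces to enumerating the corresponding forests in $\Fdp$, once the face statistic is translated into a statistic on forests. I would first record that in the closure every non-root face is a $d$-gon produced by a local closure, \emph{except} the outer face, which is the unique $d$-gon left over at the end of the closing process; this is exactly the degree count already carried out in the proof of Theorem~\ref{thm:pseudo}. Hence the number of non-root faces of the map equals the number of opening stems of the forest plus one, and if $x$ marks opening stems in the series $T_i$, then $M_{d,p}(x)$ equals $x$ times the generating series of forests in $\Fdp$ counted by opening stems. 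This already explains the leading factor $x$ (and, for $p=d$, it is the only contribution beyond the excess-$0$ trees).

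Next I would exploit the structure of a $p$-cyclic forest as ``a cycle of sequences of planted trees''. Corner-rooting the root face breaks the cyclic symmetry, so reading clockwise from the root corner turns the cycle into an ordered list of exactly $p$ slots, one per root vertex, each slot carrying an independent sequence of items hanging into the outer face. Introducing a formal variable $u$ to mark total excess, a single item is either a $d$-fractional tree of excess $i$ (for $0\le i\le d-3$, with series $T_i$) or a bare opening stem, which contributes $d-2$ to the outdegree of its root vertex and hence has excess $d-2$; thus a single item has series $F(x,u)=\sum_{i=0}^{d-3}T_i u^i + x\,u^{d-2}$. A slot is a (possibly empty) sequence of such items, of series $1/(1-F)$, and the forest is the product of $p$ slots subject to total excess $p-d$, so the forest series is $[u^{p-d}]\,(1-F)^{-p}$.

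Finally I would separate the excess-$0$ trees to reach the stated shape. Writing $1-F=(1-T_0)\bigl(1-\sum_{i=1}^{d-3}\tfrac{T_i}{1-T_0}u^i-\tfrac{x}{1-T_0}u^{d-2}\bigr)$ gives $(1-F)^{-p}=(1-T_0)^{-p}\bigl(1-\sum_{i>0}\tilde t_i u^i\bigr)^{-p}$ with $\tilde t_i=T_i/(1-T_0)$ for $1\le i\le d-3$ and $\tilde t_{d-2}=x/(1-T_0)$. By the definition of $\hdp{p-d}{p}$ in~\eref{eq:hdp} as $[u^{p-d}](1-\sum_{i>0}t_iu^i)^{-p}$, extracting the coefficient of $u^{p-d}$ and multiplying by $x$ yields $M_{d,p}(x)=x\,(1-T_0)^{-p}\,\hdp{p-d}{p}(\tfrac{T_1}{1-T_0},\ldots,\tfrac{T_{d-3}}{1-T_0},\tfrac{x}{1-T_0})$; for $p=d$ the extraction is $[u^{0}]$, so $\hdp{0}{d}=1$ and the formula collapses to $M_{d,d}(x)=x\,(1-T_0)^{-d}$. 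I expect the main obstacle to be careful bookkeeping rather than any deep idea: pinning down the off-by-one between non-root faces and opening stems (the outer face is the only non-root $d$-gon not created by a local closure), and justifying that corner-rooting really produces $p$ \emph{independent} slots in which bare opening stems are allowed on root vertices alongside planted trees, so that the excess-$(d-2)$ argument $x/(1-T_0)$ legitimately appears in $\hdp{p-d}{p}$.
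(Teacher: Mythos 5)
Your proposal is correct and follows essentially the same route as the paper: reduce to forests via Theorem~\ref{thm:pseudo} (accounting for the extra factor $x$ from the leftover outer $d$-gon), break the corner-rooted cyclic forest into $p$ independent sequences of excess-$0$ trees, positive-excess trees and bare opening stems, and extract the coefficient of total excess $p-d$ to recognize $\hdp{p-d}{p}$. The paper phrases the last step as the alternating decomposition $(s_0,t_1,s_1,\dots,t_l,s_l)$ rather than your algebraic factorization of $(1-F)^{-p}$, but these are the same computation; your treatment of the off-by-one between faces and opening stems is in fact slightly more explicit than the paper's.
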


\begin{proof}
By Theorem~\ref{thm:pseudo}, we have:
\[
M_{d,p}(x)=xF_{d,p}(x),
\]
where $F_{d,p}$ is the generating function of corner-rooted $p$-gonal $d$-fractional
cyclic forests counted according to their number of opening stems.
Erasing the edges of the root face of such a forest produces a $p$-tuple
 $P:=(P_1,\ldots,P_p)$ of sequences of
fractional trees such that the total sum of their excesses is equal to
$p-d$.

For each $1\leq i \leq p$,  the
sequence $P_i$ can be written as
$P_i=(s_{i_0},t_{i_1},s_{i_1},\ldots,t_{i_l},s_{i_l})$, such that each
of the $s_j$ is a sequence made of trees of excess 0 (possibly empty)
and each $t_j$ is a tree of positive excess. This decomposition gives
the following formula for $F_{d,p}(x)$:
\[
F_{d,p}(x)=\left(\frac{1}{1-T_0}\right)^p\cdot\hdp {p-d}
{p}\left(\frac{T_1}{1-T_0},\ldots,\frac{T_{d-3}}{1-T_0},\frac{x}{1-T_0}\right),
\]
where for $j\geq 0$, the polynomial $\hdp j p$ is the generating
function of $p$-tuples of compositions of integers, the sum of which is
equal to $j$. For $p=0$, $\hdp j 0 =1$, for $p=1$ we retrieve the quantity $h_j$ and more
generally:
\begin{equation}\label{eq:hdp}
  \hdp j p (t_1,t_2,\ldots) :=
       [x^j]\frac{1}{(1-\sum_{i>0}x^it_i)^p}.
\vspace*{-.5ex}\end{equation}
\end{proof}

\section{Fast opening of $p$-gonal $d$-angulations of girth $d$}\label{sec:opening}

\subsection{General description of the algorithm}\label{sub:general}
This section is devoted to the description of a linear-time algorithm
that associates to each element of $\Mdd$ its tree-and-closure
partition or equivalently its $d$-fractional forest. We only give the
proof in this setting, all steps extend
easily to $p$-gonal $d$-angulations.

\medskip

Let $M \in \Mdd$; recall that \TTe{M}, \CC{M} and \BB{M} denote
respectively its sets of tree edges, of closure edges and its
$d$-fractional forest.  Since the root face of $M$ is not its outer
face, the construction of Proposition~\ref{prop:bernardi} cannot be
applied directly to obtain~\BB{M}. The general idea of our algorithm
is however to use a similar construction iteratively to identify and
cut subtrees of \BB{M}.

\begin{figure}[t]
  \centering \subfigure[A blossoming forest with a distinguished subtree $\mathcal{B}_{M}^{(u,v)}$. \label{sub fig:subtreeM}]{\includegraphics[page=3, scale =0.8]{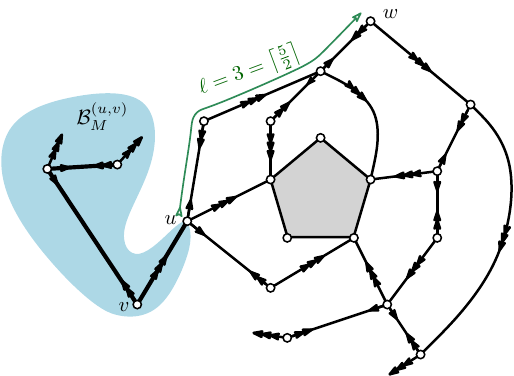}} \qquad
\subfigure[The almost-total closure $M(u,v)$ of $M$ relatively to $(u,v)$.\label{subfig:almost-total}]{\includegraphics[page=4, scale=0.8]{deborde}}  
\caption{Almost-total closure.\label{fig:almost-total}}
\end{figure}

\medskip

More precisely, for any edge $e\in \TTe M$, let us denote by \BB[e]{M}
the (blossoming) subtree of \BB{M} planted at~$e$. Define the
\emph{almost-total closure} \MM{e} of $M$ relatively to $e$ as the
maximal partial closure of \BB{M} such that the border of the subtree
\BB[e]{M} (including both sides of $e$) still lies on the outer face
of \MM{e} (see Fig.~\ref{subfig:almost-total}).  Deleting \BB[e]{M} yields
a blossoming map with a canonically marked corner, denoted by~\Mr{e}.
It is clear that $M$ is the closure of the blossoming map
obtained by grafting \BB[e]{M} on \Mr{e} in its marked corner. Then,

\medskip

\begin{claim}\label{claim:uni}
  For any map $M\in\Mdd$ and any edge $e\in\TTe{M}$, \BB{M} is the
  forest obtained by grafting \BB[e]{M} in the marked corner of~\BB{\Mr{e}}.
\end{claim}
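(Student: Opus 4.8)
The plan is to prove Claim~\ref{claim:uni} by showing that the tree-and-closure partition of the almost-total closure \MM{e} decomposes cleanly along the marked subtree \BB[e]{M}. The key observation is that \MM{e} is itself obtained by closing \BB{M} up to the point where the border of \BB[e]{M} (together with both sides of $e$) remains on the outer face; thus \MM{e} carries a minimal accessible orientation inherited from $M$ (since, as recalled in Section~\ref{sec:blossom}, partial closures preserve minimality and accessibility). First I would argue that the tree-and-closure partition of \MM{e} must place \emph{every} edge of \BB[e]{M} in its tree part: indeed the edges of \BB[e]{M} form a subtree whose entire border is exposed on the outer face, so none of them can be a saturated clockwise closure edge wrapping a cycle. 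By the uniqueness asserted in Theorem~\ref{thm:open} (transported to the forest setting via Corollary~\ref{cor:open}), the partition of \MM{e} is unique, and hence the restriction of \TTe{\MM{e}} to \BB[e]{M} is exactly \BB[e]{M}.

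Next I would analyze \Mr{e}, the blossoming map obtained by deleting \BB[e]{M} from \MM{e} and marking the corner left behind. The goal is to show that the closure of the grafting of \BB[e]{M} into the marked corner of \BB{\Mr{e}} reproduces \BB{M}. Since \Mr{e} is a blossoming map on which the remaining orientation is still minimal and accessible, Corollary~\ref{cor:blossom} (or Corollary~\ref{cor:open} in the $p$-gonal setting) applies to give its unique blossoming forest \BB{\Mr{e}}. The crucial point is that the closure operations that were \emph{not} yet performed when forming \MM{e} are precisely those that interact with the border of \BB[e]{M}; all the \emph{other} local closures — those entirely contained in $M$ away from the exposed subtree — have already been carried out in \MM{e} and are thus faithfully encoded by the tree-and-closure partition of \Mr{e}. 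Consequently the set of closure edges of $M$ splits as the closure edges internal to \BB[e]{M} (none, since it is a tree), the closure edges of \Mr{e}, and the closures that match stems across the grafting corner, which are exactly recovered when closing the grafted object.

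The cleanest way to finish, I expect, is to invoke uniqueness rather than to track individual stems. Because $M$ is the closure of the grafting of \BB[e]{M} on \Mr{e} (as stated just before the claim), and because closure preserves the minimal accessible orientation, the forest \BB{M} is \emph{the} unique $d$-fractional forest whose closure is $M$. On the other hand, grafting \BB[e]{M} into the marked corner of \BB{\Mr{e}} produces a $d$-fractional forest whose closure is also $M$: closing it first resolves all the stems of \BB{\Mr{e}} to rebuild \Mr{e}, and then the grafted subtree \BB[e]{M} sits in the marked corner exactly as \BB[e]{M} sits in \Mr{e} inside $M$. By the uniqueness of the blossoming forest associated to $M$ via Corollary~\ref{cor:blossom}, the two must coincide.

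The main obstacle I anticipate is making rigorous the claim that the local closures ``far from'' \BB[e]{M} commute with, and are independent of, the presence of the full subtree versus the marked corner — that is, that deleting \BB[e]{M} and replacing it by a single marked corner does not alter which stems of the remaining map get matched together. This rests on the order-independence of local closures (Definition~\ref{def:closure}) and on the fact that \MM{e} was defined to be \emph{maximal} subject to keeping the border of \BB[e]{M} exposed, so that no closure of \Mr{e} could have been blocked by \BB[e]{M}. I would spell out carefully that a stem of \Mr{e} is matched with a partner in \Mr{e} if and only if the corresponding stems are matched in $M$, using that the grafting corner separates the contour word into the part belonging to \BB[e]{M} and the part belonging to \Mr{e}, and that no local closure in \MM{e} straddles this separation by construction.
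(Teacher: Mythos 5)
Your proposal is correct and takes essentially the same route as the paper: the paper's entire proof is the appeal to uniqueness of \BB{M} (via Theorem~\ref{thm:pseudo}, i.e.\ Corollaries~\ref{cor:open} and~\ref{cor:blossom}) applied to the observation, stated just before the claim, that $M$ is the closure of the graft of \BB[e]{M} on \Mr{e} --- which is precisely your third paragraph. Your first two paragraphs elaborate on details the paper treats as ``clear'' (the order-independence of local closures and the maximality of \MM{e}), and the only blemish is the parenthetical ``none, since it is a tree'': being a tree does not by itself rule out closure edges of $M$ with both endpoints in \BB[e]{M}; rather, all such closures are deferred because the border of \BB[e]{M} is kept exposed in \MM{e} --- but this does not affect the uniqueness argument that carries the proof.
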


\begin{proof} 
  It follows directly from the uniqueness of $\BB{M}$ (proved in
  Theorem~\ref{thm:pseudo}).
\end{proof}

\medskip

Claim~\ref{claim:uni} entails that computing the blossoming forest of
$M$ can be achieved in three steps: first, identify \MM{e} for some
tree-edge~$e$, then, compute recursively the blossoming forest
of~\Mr{e}, and last, graft \BB[e]{M} in the marked corner
of~\BB{\Mr{e}}. This is precisely described by
Algorithms~\ref{algo:lineage} to~\ref{algo:complet} page
\pageref{algo:complet}.
The correction of Algorithm~\ref{algo:complet} relies on the three
following propositions, whose proofs are postponed to the next
subsections:

\begin{algorithm}[p]
  \caption{LineagePath($M$)}\label{algo:lineage}
  \KwIn{a map $M\in \Mdd$ endowed with its minimal \ddori} 
  \KwOut{a lineage path $(v_0,e_1,v_1,\ldots,e_\ell,v_\ell)$ in $\TT M$ oriented
    towards the root face}  
  \KwSide{partial opening, such that each $e_i$ is an outer edge}
  \BlankLine
  let $(u,v)$ be an outer closure edge; initialize $C$ to $\{(u,v)\}$,
  $i$ to 0 and set $v_0 = v$\;   
  \Repeat{$i = \lceil d/2 \rceil$ or $v$ is a root vertex}{%
   update $u$ to the previous vertex around $v$ in clockwise order\;
    \lIf{$(u,v)$ is saturated towards $v$}{add it to $C$}
    \lElse{update $(u,v)$ to $(v,u)$, increment $i$ and set $v_i=v$
      and $e_{i}= (u,v)$}
  }
  replace each edge of $C$ by an opening stem in the appropriate corner\;
  \Return{$(v_0,e_1,v_1,\ldots,e_\ell,v_\ell)$}
\end{algorithm}

\begin{algorithm}[p]
\caption{DetachableSubtree($M$)}\label{algo:subtree}
 \KwIn{a map $M\in \Mdd$, endowed with its minimal \ddori}
 \KwOut{the set $T$ of inner edges of a subtree $\BB[e_{\ell}]{M}$}
\KwSide{M is partially opened into \MM{e_\ell}}
 \BlankLine
let $(v_0,e_1,v_1,\dots, e_{\ell},v_\ell) =
\FuncSty{LineagePath}(M)$\;
initialize $T$ and $C$ to $\emptyset$, and $(u,v)$ to
$(v_{\ell}, v_{\ell-1})$\;
 \Repeat{$(u,v) = (v_{\ell-1}, v_{\ell})$}{%
\If{$(u,v) \not\in C\cup T$}{%
     \lIf{$(u,v)$ is saturated towards $v$}{add $(u,v)$ to $C$}
     \lElse{add $(u,v)$ to $T$}
 }
 \lIf{$(u,v) \in T$}{%
     update $(u, v)$ to $(v, u)$
 }
 update $v$ to the next vertex (in clockwise order) around $u$\;
     }
 replace each edge of $C$ by an opening stem in the appropriate corner\;
\Return{$T$}
\end{algorithm}

\begin{algorithm}[p]
\caption{BlossomingForest(M)}\label{algo:complet}
  \KwIn{a map $M\in \Mdd$, endowed with its minimal \ddori}
  \KwOut{its blossoming forest \BB M}
  \BlankLine
  %
  let $T = \FuncSty{DetachableSubtree}(M)$ and $t$ be the minimal tree
  with same excess \;
  replace $T$ by $t$ in $M$ and perform the closure\;
  let $F = \FuncSty{BlossomingForest}(M)$  and replace $t$ by $T$ in $F$\;
  \Return{F}\;
\end{algorithm}

\begin{figure}[p]
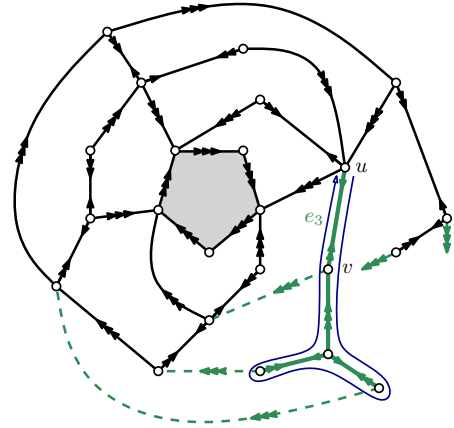
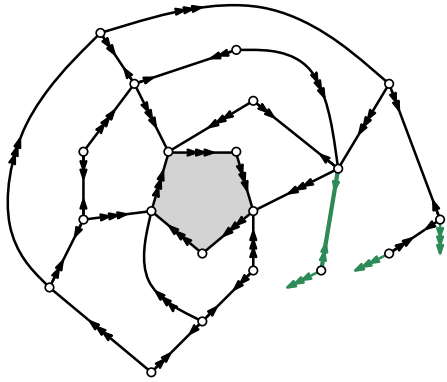
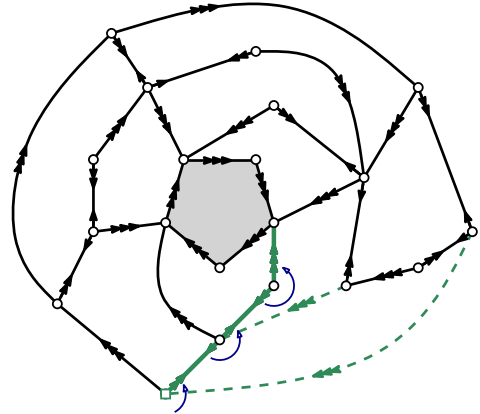
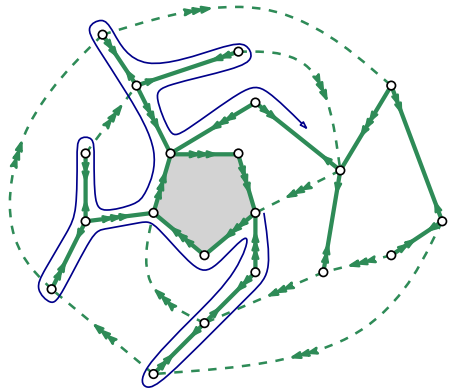
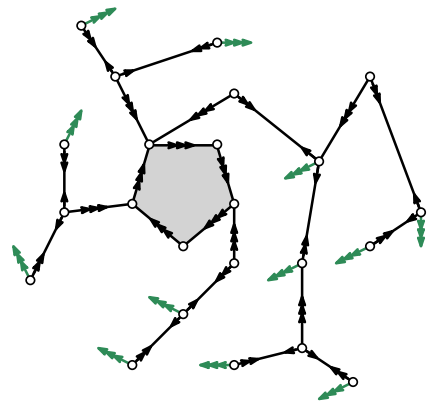

  \centering \subfigure[Identification of a lineage path of
  length~3,\label{subfig:lineage}]{%
    \includegraphics[scale = 0.8,page=4]{pentagulation}} 
 \quad\subfigure[the subtree ${\BB[e_3]{M}}$,\label{subfig:dfs}]{%
    \quad\includegraphics[scale=0.8,page=5]{pentagulation}}

  \subfigure[replacing ${\BB[e_3]{M}}$ by $t_1^{(5)}$,\label{subfig:grafting}]{%
    \includegraphics[scale = 0.8,page=7]{pentagulation}
    \mbox{}\vspace{-5em}} 
  \quad\subfigure[the lineage path reaches the root pentagon, 
  \label{subfig:lineage2}]{%
    \quad\includegraphics[scale=0.85,page=9]{pentagulation}
    \mbox{}\vspace{-5em}}

  \subfigure[according to Remark~\ref{rem:fin}, the DFS identifies $\BB{\Mr{e_3}}$ entirely,\label{subfig:dfs2}]{%
    \includegraphics[scale = 0.8,page=10]{pentagulation}} 
 \quad\subfigure[which enables to reconstruct \BB{M}.\label{subfig:reconstruct}]{%
    \quad\includegraphics[scale=0.8,page=3]{pentagulation}}
 
\caption{Execution of the fast opening algorithm on the
  pentagulation of Fig.\ref{fig:oriPent}.}
\label{fig:aretebord}
\end{figure}

\smallskip

\begin{prop}\label{lem:path} 
  Algorithm \ref{algo:lineage} returns a lineage path
  $(v_0,e_1,v_1,\ldots,e_\ell, v_\ell)$ in $\TT M$ oriented towards
  the root polygon and opens
  a subset $C$ of $\CC M$ such that each $e_i$ is an outer edge of
  the resulting blossoming map.
\end{prop}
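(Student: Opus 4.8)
The plan is to prove the three assertions simultaneously by induction on the number of times the repeat-loop sets a new $v_i$, maintaining throughout an invariant that ties the algorithm's state to the tree-and-closure partition $(\TT M,\CC M)$ guaranteed by Corollary~\ref{cor:open}. Concretely, I would show that at the moment $v_i$ becomes the current vertex the following holds: the sequence $(v_0,e_1,\ldots,e_i,v_i)$ is a path of edges of $\TT M$ in which every $e_j$ is the unique outgoing tree edge of $v_{j-1}$ (its edge towards the root face), so the path is oriented towards the root polygon; every edge so far placed in $C$ belongs to $\CC M$; and, after performing the partial opening of the edges currently in $C$, the whole path lies on the border of the outer face, with the arriving edge at $v_i$ sitting immediately before the incoming closure edges of $v_i$ in clockwise order. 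The base case is the initialisation: the starting edge $(u,v)$ is an outer closure edge, hence an element of $\CC M$ incident to the outer face, and opening it exposes the corner of $v_0=v$ from which the scan begins.

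For the inductive step I would analyse one traversal of the loop body at the current vertex $v=v_i$. Rotating the second endpoint around $v$, each edge that is saturated towards $v$ is added to $C$, and the first edge that is not saturated towards $v$ is taken to be $e_{i+1}$, after which $v$ is updated to its other endpoint $v_{i+1}$. The crux is to identify what this scan meets: I would establish that, starting from the arriving edge, it encounters only incoming (saturated) closure edges of $v$ and then the unique outgoing tree edge of $v$, namely the parent edge $e_{i+1}$. Granting this, all collected edges are incoming edges of $\CC M$, so $C\subseteq\CC M$; the edge $e_{i+1}$ is a tree edge oriented away from $v_i$, i.e. towards the root; and opening the closure edges just collected merges their incident inner faces into the outer face, so that the corner of $v_i$ immediately clockwise of $e_{i+1}$ becomes an outer corner. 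This makes $e_{i+1}$ an outer edge and re-establishes the invariant at $v_{i+1}$, the arriving edge there being $e_{i+1}$ itself. The loop stops either when $v$ is a root vertex or after $\lceil d/2\rceil$ steps, and in both cases the accumulated data is exactly the claimed lineage path together with $C\subseteq\CC M$, every $e_i$ being outer.

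The main obstacle is precisely the local structural statement used in the inductive step: around a vertex $v$ of the minimal orientation, the incoming closure edges occupy exactly the clockwise arc immediately preceding the unique outgoing tree edge $e_{i+1}$, so that the scan started at the arriving edge reaches $e_{i+1}$ through incoming closure edges only. The delicate points are to rule out an \emph{outgoing} closure edge appearing in that arc (which the saturation test would misclassify as a tree edge and follow the wrong way) and to rule out another \emph{child} tree edge appearing before $e_{i+1}$. Both are consequences of minimality: since every edge of $\CC M$ turns clockwise around the cycle it forms with $\TT M$ (Corollary~\ref{cor:open}) and $O$ has no counterclockwise cycle, an offending edge in that arc, together with the portion of the outer boundary already traversed, would close up into a counterclockwise cycle, a contradiction. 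This is the same contour-type argument that underlies Proposition~\ref{prop:bernardi}, with the one essential difference that here the root face and the outer face differ, so the reasoning must be run locally along the partially opened outer boundary produced by the algorithm rather than globally from the root corner. I expect this localisation, together with the bookkeeping that the arriving edge always lands adjacent to the incoming-closure arc of the next vertex, to be the part requiring the most care.
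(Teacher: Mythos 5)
Your overall skeleton --- an induction along the iterations of the repeat-loop, with an invariant relating the algorithm's state to the tree-and-closure partition and to the partially opened outer boundary --- is the same as the paper's. But the key ingredient you propose for the inductive step is not the one the paper uses, and I do not think it can be made to work. You claim that an offending edge in the scanned arc (a child tree edge, or an outgoing closure edge, met before the parent edge) ``would close up into a counterclockwise cycle'' with the portion of the boundary already traversed, so that minimality alone rules it out. For the crucial case of a child edge this fails: a child edge $(w,v_i)$ is oriented \emph{into} $v_i$, exactly as the traversed lineage path $v_0\to\dots\to v_i$ is, so the two do not concatenate into an oriented cycle, and minimality says nothing about where in the rotation order around $v_i$ a child may sit. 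Moreover, if your local structural claim (``incoming closure edges occupy exactly the arc immediately preceding the parent edge'') held unconditionally as a consequence of minimality, the lineage path could be prolonged all the way to the root face and the cap of $\lceil d/2\rceil$ iterations in Algorithm~\ref{algo:lineage} --- together with the whole machinery of detachable subtrees and grafted minimal trees in the rest of Section~\ref{sec:opening} --- would be pointless. Your invariant never explains where this bound comes from, which is the sign that the real argument is missing.

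The paper's proof rests instead on Lemma~\ref{lem:deborde}, a \emph{counting} argument specific to $d$-angulations: a $d$-fractional subtree with $n$ edges and excess $i$ carries $(2n+i)/(d-2)$ stems, whose local closures consume $2n+i+1$ edge sides, while the subtree itself can supply at most $2n-\lceil(d-2-i)/2\rceil$ of them; hence the almost-total closure of any child subtree must wrap at least $\lceil d/2\rceil$ additional edge sides of the outer boundary beyond its root. Since at step $i_k$ the starting vertex $v_0$ is an outer vertex at clockwise distance $i_k<\lceil d/2\rceil$ from $v_{i_k}$, the edge currently examined cannot lead to a child (its subtree's closure would have to swallow $v_0$), and this is precisely what lets the algorithm classify saturated edges as closure edges and follow the unique remaining non-saturated edge to the parent --- but only for $\lceil d/2\rceil$ steps. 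To repair your proof you need to replace the minimality argument by this quantitative lemma (or an equivalent use of the degree constraints of $\Mdd$); minimality by itself is not enough here.
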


\begin{prop}\label{prop:bernardi++} 
    Algorithm \ref{algo:subtree} returns the subtree $\BB[e]{M}$
    for some edge $e$ and opens the corresponding closure edges of $M$
    so as to compute~\MM{e}.
\end{prop}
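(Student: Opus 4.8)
The plan is to read Algorithm~\ref{algo:subtree} as a \emph{local} instance of the linear-time contour process of Proposition~\ref{prop:bernardi}, applied to the subtree hanging off the end of the lineage path. Its first instruction calls Algorithm~\ref{algo:lineage}, so by Proposition~\ref{lem:path} the map has been partially opened into a blossoming map in which each lineage edge $e_i$ is an outer edge; set $e=e_\ell=(v_{\ell-1},v_\ell)$ and write \BB[e]{M} for the subtree of \BB{M} planted at $e$ and rooted at $v_{\ell-1}$. The loop then starts by crossing $e$ in the direction $(u,v)=(v_\ell,v_{\ell-1})$ and walks clockwise, classifying each edge $(u,v)$ incident to the current vertex $u$ as a closure edge when it is saturated towards $v$ (\ie outgoing at $u$) and as a tree edge otherwise, descending along the tree edges into the subtree. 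I would first observe that this is the same classification rule as in Proposition~\ref{prop:bernardi}, with $v_\ell$ playing the role of the external root and $e$ that of the root edge.

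The crux of the argument is to show that this clockwise walk never leaves \BB[e]{M}. By Corollary~\ref{cor:open}, every surviving closure edge is saturated and oriented from a vertex to one of its tree-descendants, turning clockwise around the cycle it closes. An escape could only occur at a subtree vertex $w$ through a closure edge oriented \emph{into} $w$ from a proper ancestor $u$ of $v_{\ell-1}$, for such an edge is incoming at $w$ and would be misread as a tree edge and followed out. But together with the tree path from $w$ up through $e$ to $u$, such a closure edge forms a cycle that wraps $e$; keeping it closed would place $e$ on a bounded face, contradicting that $e$ is an outer edge. Hence all closure edges wrapping $e$ have already been opened by Algorithm~\ref{algo:lineage}, and the only closure edges incident to \BB[e]{M} that remain join a subtree vertex to one of its descendants, \ie are internal to \BB[e]{M}.

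With only internal closure edges left, the situation around \BB[e]{M} is exactly the one covered by Proposition~\ref{prop:bernardi}: its whole border lies on the outer face, so the clockwise contour of the standalone rooted blossoming tree \BB[e]{M} recovers precisely its tree edges (collected in $T$) and the closure edges it forms with them (collected in $C$, and reopened into opening stems). The loop halts at the first return $(u,v)=(v_{\ell-1},v_\ell)$, which is exactly the moment the clockwise contour of a rooted tree comes back onto its planting edge $e$ after traversing each of its tree edges twice; at that instant $T$ is the edge set of \BB[e]{M}, so Algorithm~\ref{algo:subtree} returns the announced subtree. Finally, the edges opened in $C$, together with those opened by Algorithm~\ref{algo:lineage}, are precisely the closure edges incident to \BB[e]{M}; reopening them while leaving every other local closure in place produces, by maximality, the almost-total closure \MM{e}, which settles the side effect.

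As anticipated, the single genuine difficulty is the confinement claim of the second paragraph: everything else is bookkeeping once one knows that the walk stays inside \BB[e]{M}. Ruling out an escape is exactly where both hypotheses are spent. The outer-edge conclusion of Proposition~\ref{lem:path} guarantees that every closure edge wrapping $e$ has been opened beforehand, while the minimality of the orientation, through Corollary~\ref{cor:open}, pins down the surviving closure edges as pointing from ancestors to descendants and thus forces them to be internal to the subtree.
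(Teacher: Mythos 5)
Your proof takes the same route as the paper's: granting Proposition~\ref{lem:path}, read Algorithm~\ref{algo:subtree} as the depth-first contour process of Proposition~\ref{prop:bernardi} run locally on the subtree planted at $e_\ell$, with $v_\ell$ as external root. The paper dispatches this reduction in a single sentence by deferring to the proof in~\cite{Bernardi07}, so your explicit confinement step (no unopened closure edge can enter \BB[e_\ell]{M} from outside, as it would cover a side of $e_\ell$ and part of the subtree's border) merely spells out what the paper leaves implicit and what the side effect of Algorithm~\ref{algo:lineage} together with Lemma~\ref{lem:deborde} is designed to guarantee.
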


\begin{prop}\label{prop:order}
  There exists a partial order on blossoming trees such that:

  \begin{itemize}
  \item for any $i \in \[[0, d-2\]]$, there exists a unique minimum $\td i$ among trees
    of excess $i$,
  \item the tree $T$ produced by Algorithm \ref{algo:subtree} does not
    belong to the family $\left(\td i\right)_{0\leq i\leq d-2}$.
  \end{itemize}
\end{prop}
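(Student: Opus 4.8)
The plan is to obtain the order from a size statistic, to read the minima off the integrality constraint governing blossoming trees, and then to exploit the length $\lceil d/2\rceil$ of the lineage path to show that a detached subtree is always strictly above its minimum. \emph{Defining the order and its minima.} I would order two $d$-fractional trees first by their number of non-root vertices and break ties by the lexicographic order of their contour words. For a fixed excess $i$ this is a total well-order: the number $k$ of non-root vertices is bounded below and each value of $k$ yields only finitely many trees, so there is a unique least element $\td i$, which then lies below every tree of excess $i$. To identify it I would use the balance identity established in the proof of Theorem~\ref{thm:pseudo}: a $d$-fractional tree of excess $i$ with $k$ non-root vertices carries exactly $\tfrac{2k+i}{d-2}$ opening stems. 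Hence $k$ must satisfy $2k+i\equiv 0 \pmod{d-2}$, and $\td i$ is realized at the least admissible $k$ by the most compact, comb-like tree whose stems are pushed as high as possible; the boundary values $i=0$ and $i=d-2$ give the single-vertex tree and the lone opening stem respectively. This settles the first bullet.

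\emph{The detached subtree is not minimal.} By Proposition~\ref{prop:bernardi++}, the tree $T$ returned by Algorithm~\ref{algo:subtree} is $\BB[e_\ell]{M}$, planted at the top edge $e_\ell=(v_{\ell-1},v_\ell)$ of a lineage path computed by Algorithm~\ref{algo:lineage}. Whenever the path does not already reach the root face it has full length $\ell=\lceil d/2\rceil$, so $\BB[e_\ell]{M}$ contains the entire descending spine $v_{\ell-1},\dots,v_0$ together with the subtrees grafted at the closure edges skipped along the way. I would then compare the resulting lower bound on the number of non-root vertices of $T$ with the least admissible $k$ computed above, and check that the former strictly exceeds the latter; since $\td{\mathrm{exc}(T)}$ is the unique tree attaining that least value, it follows that $T\neq\td{\mathrm{exc}(T)}$, hence $T$ is not among the $\td i$. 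In the complementary case, where the lineage path reaches the root face, $T$ is a genuine subtree carrying at least one non-root vertex, so in particular $T\neq\td{d-2}$, and the same comparison applies to its excess.

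\emph{Main obstacle.} The order and the stem count are routine. The delicate point is the quantitative step: certifying that the length $\lceil d/2\rceil$ of the lineage path is calibrated exactly so that the detached spine forces strictly more than the minimal number of non-root vertices for $\mathrm{exc}(T)$, uniformly in $d$ and over all excess classes. I expect this to be the crux, and to require a case split on the parity of $d$ --- recall that for even $d$ the flows, and hence the admissible values of $k$, obey the stronger parity constraint of Theorem~\ref{thm:girthd} --- together with the accessibility of the minimal \ddori, which rules out the short, stem-saturated configurations that would otherwise let $T$ coincide with a minimum.
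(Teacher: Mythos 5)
Your strategy coincides with the paper's --- a size-based order with a tie-break, minima read off the divisibility constraint $s=\frac{2k+i}{d-2}\in\NN$, and the length $\lceil d/2\rceil$ of the lineage path as the quantitative input --- but the step you defer as ``the crux'' is precisely where the argument you propose fails, and it is not repaired by a parity case split on the admissible values of $k$. When $d$ is odd and $i\geq 2$ is even, the least admissible size is attained by \emph{two} trees, both paths of length $d-2-i/2$ carrying two stems, with contour words $w_1=e^p b\, e^q b\, \bar e^{\,p+q}$ and $w_2=e^{p+q} b\, \bar e^{\,q} b\, \bar e^{\,p}$, where $p=\frac12(d-i-1)$ and $q=\frac12(d-3)$. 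What the lineage path gives you about the tree $T$ returned by Algorithm~\ref{algo:subtree} is that its contour word begins with at least $d/2$ letters $e$ before the first stem; this excludes $w_1$, whose initial prefix has only $p<d/2$ edges, but \emph{not} $w_2$, whose initial spine has $p+q=d-2-i/2\ge d/2$ edges whenever $i\le d-4$ (for $d=7$, $i=2$ both minima have $4$ edges while the lineage path has length $\lceil 7/2\rceil=4$). So the inequality you aim for --- that the vertex count of $T$ strictly exceeds the least admissible $k$ --- is simply not available, and the proposition holds only because $\td{i}$ is \emph{defined} to be $w_1$: the discriminating statistic is the length of the initial all-$e$ prefix of the contour word, not the size, and the tie-break must be engineered to select the tree with the short prefix. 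Your generic lexicographic tie-break is never checked to do this; with the alphabet ordered so that $e$ precedes $b$ it selects $w_2$, the one minimal-size tree that the lineage-path bound cannot exclude, and the second bullet can then no longer be established.

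Two smaller inaccuracies. For $i=0$ the minimum is the \emph{empty} tree, not a single-vertex tree: a lone non-root vertex would require $2/(d-2)$ stems, which is not an integer for $d\geq 5$. And your vertex-count comparison does go through for $d$ even or $i$ odd, where the unique minimum is a path of length $(d-2-i)/2<d/2$ with one stem --- the gap is confined to, but genuinely present in, the odd-$d$/even-$i$ classes, which is exactly the case the paper's explicit choice of $w_1$ is designed to handle.
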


\smallskip

By Proposition~\ref{prop:order}, the sequence of maps on which
Algorithm \ref{algo:complet} is recursively performed is
decreasing, and hence converges eventually to the trivial map. 
Following the
sequence backwards provides a recursive construction of the \TCP of
$M$. A complete run of the algorithm for the pentagulation of
Fig.\ref{fig:oriPent} is shown in Figure~\ref{fig:aretebord}: Algorithm
\ref{algo:lineage} in Fig.\ref{subfig:lineage}, Algorithm
\ref{algo:subtree} in Fig.\ref{subfig:dfs}, the replacement of the
subtree in Fig.\ref{subfig:grafting}, and the recursive call
afterwards.

\begin{thm}\label{thm:linear} For any input map $M \in \Mdd$, Algorithm
  \ref{algo:complet} computes its blossoming forest in linear time
  with respect to the size of~$M$.
\end{thm}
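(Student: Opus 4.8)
The plan is to establish the linear-time complexity of Algorithm~\ref{algo:complet} by an amortized analysis, showing that although the algorithm is recursive and each recursive call re-examines part of the map, the total work summed over all calls is proportional to the size of $M$. The key structural fact, already guaranteed by Propositions~\ref{lem:path}, \ref{prop:bernardi++} and~\ref{prop:order}, is that each call identifies a detachable subtree $\BB[e_\ell]{M}$ via a lineage path followed by a depth-first traversal, replaces it by the minimal tree $\td i$ of the same excess, performs a closure, and recurses. First I would argue that the cost of a single invocation of \FuncSty{DetachableSubtree} (hence of \FuncSty{LineagePath} and the subsequent DFS) is linear in the number of edges of the subtree $\BB[e_\ell]{M}$ it returns, together with the closure edges opened along the way: the \textbf{Repeat} loop in Algorithm~\ref{algo:subtree} visits each edge of the subtree a bounded number of times (twice, once from each side, in the contour traversal), and the lineage path in Algorithm~\ref{algo:lineage} has length at most $\lceil d/2\rceil$, a constant for fixed $d$.

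Next I would set up the amortization. The subtlety is that after detaching $\BB[e_\ell]{M}$ and replacing it by the minimal tree $\td i$, the map $\Mr{e}$ on which we recurse is \emph{smaller}, but the replacement inserts $\td i$, whose size is bounded by a constant depending only on $d$ and the excess $i\in\{0,\dots,d-3\}$. I would therefore introduce a potential, say the number of non-root vertices of the current map, and show that each recursive step removes from the map all the vertices of the detached subtree while reinserting only $O(1)$ vertices (those of $\td i$). Since Proposition~\ref{prop:order} guarantees that the detached tree $T$ is \emph{not} one of the minimal trees $\td i$, it is strictly larger than its replacement, so the potential strictly decreases and the number of recursive calls is at most linear in the size of $M$. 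Summing the per-call cost against the number of vertices permanently removed then yields a telescoping bound: each original edge of $M$ is charged a constant number of times across the whole execution.

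The main obstacle I anticipate is controlling the cost of the closure operation performed after each replacement in Algorithm~\ref{algo:complet}, and more generally ensuring that edges traversed during the DFS and the lineage path are not re-traversed an unbounded number of times across successive recursive calls. A naive accounting could make the closure cost proportional to the whole remaining boundary at each step, which would give a quadratic bound as in the generic case discussed in Section~\ref{sec:effective}. The crux is thus to show that the closure triggered by inserting $\td i$ is local: it only consumes the stems freed by the detachment and the few stems carried by $\td i$, so its cost is again bounded by the size of the detached subtree plus a constant. I would make this precise using the side-effect invariant recorded in Algorithm~\ref{algo:subtree}, namely that upon return the map has been partially opened into $\MM{e_\ell}$, so that the edges already opened as closure edges are never revisited in later calls. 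Establishing this locality rigorously — essentially that the work done on the boundary of $\BB[e_\ell]{M}$ is never repeated — is where the real care is needed, and it is exactly what the three preparatory propositions are designed to support. Once locality is in hand, the linear bound follows by a straightforward summation, and the theorem is proved.
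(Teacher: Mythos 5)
You have correctly located the danger zone --- the risk of a quadratic blow-up from repeatedly handling closure edges across recursive calls --- but your proposal stops exactly where the paper's proof does its real work, and the resolution you sketch is not the right one. The paper's difficulty is not the closure performed after grafting $\td{i}$; it is the cost $c(k)$ of processing the closure edges encountered while walking the lineage path in Algorithm~\ref{algo:lineage}: a single vertex of that path may receive an unbounded number of incoming closure edges, so the per-step cost can only be bounded by $n$ a priori, and no ``locality'' statement follows from Propositions~\ref{lem:path}, \ref{prop:bernardi++} or~\ref{prop:order}, which concern correctness and termination, not amortized cost. The paper's key idea, entirely absent from your proposal, is a data structure: closure half-edges ending at the same vertex are grouped into \emph{bundles} (doubly-linked lists of consecutive closure half-edges), so that the lineage path meets at most $d$ bundles per call, and merging bundles or deleting a consecutive block of half-edges from one takes constant time; since the total number of closure edges ever created is linear (at most two new ones per step), the sum of the $c(k)$ is linear. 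Asserting that ``locality follows from the preparatory propositions'' leaves precisely this mechanism unproved.

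A second, smaller gap: your potential (the number of non-root vertices) does \emph{not} strictly decrease at every recursive call. When $d$ is odd and the excess $i$ is even, $\Tdn{i}$ contains two trees of minimal size, and Algorithm~\ref{algo:subtree} may return the non-canonical one, which is then replaced by $\td{i}$ of the \emph{same} size; Proposition~\ref{prop:order} only guarantees strict decrease for the refined order of Section~\ref{sec:smalltrees}, not for the vertex count. The paper bounds the number of such size-preserving steps separately by $2n/(d-1)$, using the fact that any given edge can appear at most once in such a minimal-size detached subtree; without that extra counting argument your bound on the number of recursive calls does not go through.
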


\smallskip

\subsection{Identification of a subtree of $\BB{M}$}\label{sub:opepath}

In this subsection, we prove Proposition~\ref{prop:bernardi++}.  First
observe that, assuming Proposition~\ref{lem:path}, the proof of
Proposition~\ref{prop:bernardi++} is easily derived from the proof
of Proposition~\ref{prop:bernardi}, given in~\cite{Bernardi07}:
Algorithm~\ref{algo:subtree} is actually nothing but an adapted
version of the depth-first search process described in
Section~\ref{sec:effective}, in a slightly looser case.

\medskip

Therefore we only need to prove Proposition~\ref{lem:path}.  To this
purpose, we need the following technical result:

\begin{defn} 
  Let $u$ and $w$ be two outer vertices of a blossoming map.  The
  \emph{clockwise distance} between $u$ and $w$ is the number of edge
  sides in the shortest clockwise path from $u$ to $w$ along the
  border of its outer face, see Fig.\ref{subfig:longueurl}.
\end{defn}

\begin{lem}\label{lem:deborde}
  Let $M\in \Mdd$ and $u,v$ in $M$ such that $v$ is a child of $u$ in
  \BB{M}.  Let $w$ be an outer vertex of $M$ that does not belong to
  \BB[u,v]{M}. Then, the clockwise distance between $u$ and $w$ in
  \MM{u,v} cannot be smaller than $\lceil d/2 \rceil$, see Fig.\ref{subfig:deborde}.
\end{lem}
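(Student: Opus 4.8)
The plan is to prove the contrapositive by a local closure argument, deriving a contradiction from the degree-$d$ faces of $M$ (recall that $M\in\Mdd$ has all non-root faces of degree $d$ and girth $d$, Theorem~\ref{thm:girthd}). So suppose that $w\notin\BB[u,v]{M}$ is an outer vertex of $M$ lying at clockwise distance $k<\lceil d/2\rceil$ from $u$ on the border of \MM{u,v}. I would complete the closure of \MM{u,v} into $M$ and show that this forces $w$ to be separated from the outer face, i.e.\ to be interior in $M$ --- contradicting the hypothesis that $w$ is one of the outer vertices of $M$.

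First I would fix the local geometry. In \MM{u,v} the subtree \BB[u,v]{M} hangs into the outer face, so $u$ occurs on the border both when the contour enters the subtree and when it leaves it; reading clockwise from the latter occurrence, the shortest arc to $w$ is a path $\pi=(u=x_0,x_1,\dots,x_k=w)$ lying in the already-closed region \Mr{u,v}. The edge $(u,v)$, a tree edge oriented from the child $v$ towards $u$, is the unique bridge between the exposed subtree and $\pi$; and since all local closures are performed clockwise (and minimality is preserved), every closure edge created from now on attaches an opening stem of the subtree to a closing stem located clockwise along $\pi$.

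The heart of the argument is to run the remaining local closures and watch them sweep $\pi$. Each such closure pairs an opening stem of the subtree with the next available closing stem, creating a face of degree exactly $d$ whose boundary consists of the closure edge, the bridge $(u,v)$, a (possibly empty) segment inside the subtree, and an initial segment of $\pi$; the degree condition forces these four pieces to total $d$. As the stems are consumed, successive faces reuse the first vertices $x_1,x_2,\dots$ of $\pi$, and a vertex ceases to lie on the outer face as soon as it is covered on both of its sides by such faces. A count of the $d$ boundary edges of these faces against the length of $\pi$ shows that they reach at least $\lceil d/2\rceil-1$ vertices deep along $\pi$ on both sides, so that $x_1,\dots,x_{\lceil d/2\rceil-1}$ all get enclosed; since $k<\lceil d/2\rceil$, the vertex $w=x_k$ is among them and is therefore interior in $M$, the desired contradiction. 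The even case is handled identically after passing to the $(d-2)$-expanded map, so that ``saturated'' and ``clockwise'' become literal statements about whole edges.

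The step I expect to be the main obstacle is precisely this quantitative covering estimate: one must show that the degree-$d$ faces produced by closing the exposed subtree reach at least $\lceil d/2\rceil-1$ vertices deep along $\pi$, uniformly in the size and shape of the subtree and in the fractional setting. This is where the ceiling and the parity of $d$ enter, and where minimality is essential, since it rules out the mirror configuration in which the closure edges would reach only the short side of each face. Establishing this reach cleanly is the technical core; once it is in place, the separation of $w$ from the outer face, and hence the lemma, follow at once.
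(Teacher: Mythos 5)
There is a genuine gap. You correctly reduce the lemma to a quantitative overflow statement --- that closing the stems of $\BB[u,v]{M}$ must consume at least $\lceil d/2\rceil$ edge sides of the outer boundary beyond $u$ in clockwise order, thereby wrapping every vertex at smaller clockwise distance --- but you then explicitly defer the proof of that estimate, calling it the ``technical core''. That estimate \emph{is} the lemma; without it nothing has been proved. Moreover, the mechanism you sketch for it does not hold: you describe each closure face as containing the closure edge, the bridge $(u,v)$, a segment of the subtree and an initial segment of $\pi$, but most local closures of the subtree's stems are internal to the subtree and never touch $(u,v)$ or $\pi$ at all (and in the $d$-angulation setting there are no closing stems to pair with --- a stem simply closes onto the vertex $d-1$ edge sides later). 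A face-by-face sweep along $\pi$ therefore does not get off the ground as described.

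The paper's proof is a global count in two parts. First, $T=\BB[u,v]{M}$ with $n$ edges and flow $i$ from $u$ to $v$ carries exactly $(2n+i)/(d-2)$ stems, and closing all of them consumes $2n+i+1$ edge sides in total (each closure uses $d-1$ sides, and each newly created edge except the last is reused). Second --- and this is the idea your sketch is missing --- the outdegree-$d$ condition forces the flow from a vertex of $T$ at tree-distance $h$ from $u$ to its parent to be at least $d-i-2h$, so vertices with $h\leq\lceil(d-2-i)/2\rceil$ cannot carry a stem; hence the first $\lceil(d-2-i)/2\rceil$ edge sides of the contour of $T$ are unusable, and $T$ supplies at most $2n-\lceil(d-2-i)/2\rceil$ sides. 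The deficit, $i+1+\lceil(d-2-i)/2\rceil\geq\lceil d/2\rceil$, must be taken from the boundary clockwise of $u$, which is exactly the claimed bound. Note also that the relevant input here is the $\frac{d}{d-2}$-orientation's outdegree constraint, not minimality, which plays no role in this count beyond fixing the clockwise direction of closure.
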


\begin{proof}
  Let $T = \BB[u,v]{M}$, $n$ be its number of edges, and $0\leq i\leq
  d-3$ be the flow from $u$ to $v$. A simple combinatorial argument
  yields that the number of stems in $T$ is equal to
  $(2n+i)/(d-2)$. It implies that performing all the local closures of
  stems in $T$ requires $2n+i+1$ sides of edges (each stem needs $d-1$
  sides of edges, and each new created edge but the last
  one can be used in a further closure).
\smallbreak

  Now the number of sides of edges of $T$ is equal to $2n$, but not
  all of them can be used in local closures involving outgoing stems
  of $T$. Indeed the flow between a vertex in $T$ at distance $h$ of
  $u$ and its parent is at least $d-i-2h$, hence such a vertex can
  carry a stem only if $h> \lceil (d-2-i)/2\rceil$. Therefore the
  first stem explored during a DFS contour of $T$ is discovered after
  at least $\lceil (d-2-i)/2\rceil$ steps. Hence the number of sides
  of edges available for closures in $T$ is at most
  $2n-\lceil(d-2-i)/2\rceil$. It follows that closing the stems of $T$
  requires at least $\lceil d/2\rceil$ additional sides of edges;
  hence outer vertices of \MM{u,v} at clockwise distance less than
  $\lceil{d/2}\rceil$ from $u$ are separated from the outer face of
  $M$ by at least one closure edge.
\end{proof}

\begin{figure}[t]
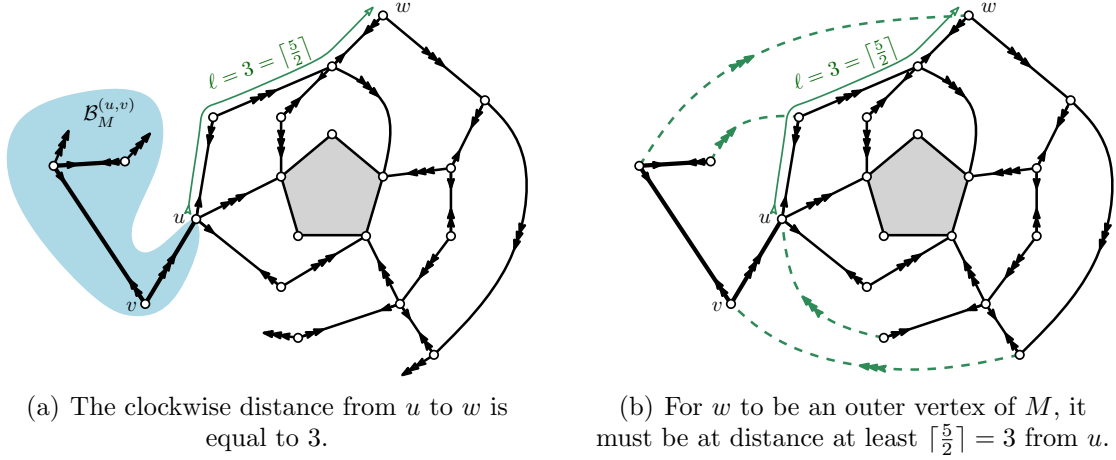

  \centering \subfigure[The clockwise distance from $u$ to $w$ is equal to $3$.\label{subfig:longueurl}]{\includegraphics[page=1,
    scale=0.8]{deborde}}\qquad \subfigure[For $w$ to be an outer
  vertex of $M$, it must be at distance at least $\lceil \frac{5}{2}\rceil = 3$ from $u$.\label{subfig:deborde}]{\includegraphics[page=2,
    scale =0.8]{deborde}}
  \caption{Illustration of Lemma~\ref{lem:deborde} for a pentagulation. }
  \label{fig:deborde}
\end{figure}

This lemma implies that if $(u,v)$ is a tree-edge and $u$ is the
parent of $v$, then the outer face cannot lie on the left of
$(v,u)$. In particular:

\begin{cor}
  Let $M\in\Mdd$.  Any outer clockwise saturated edge of $M$ (and
  there exists at least one of them) is a closure edge, and other outer
  edges (if any) have the outer face on their right when walking from
  a child to its parent.
\end{cor}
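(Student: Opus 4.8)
The plan is to treat the three assertions in turn, using minimality of $O$ for the existence claim and the consequence of Lemma~\ref{lem:deborde} recorded just above for the dichotomy between tree and closure edges; throughout I orient tree edges from a child towards its parent, which is forced by accessibility of the restriction of $O$ to the spanning forest $\TT M$. For existence, observe that since $M\in\Mdd$ has girth $d$ and its outer face is a non-root face of degree $d$, the border of the outer face is a simple cycle of length $d$. If no outer edge were clockwise saturated, then every outer edge would carry at least one copy oriented with the outer face on its right, i.e.\ it would be at least partially oriented counterclockwise, so the outer boundary would be a counterclockwise circuit. This contradicts the minimality of $O$ (no counterclockwise cycle, see Theorem~\ref{th:Felsner}), and hence at least one outer clockwise saturated edge exists.

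Next I would show that any such edge is a closure edge. Suppose, for contradiction, that a clockwise saturated outer edge $e$ belongs to $\TT M$, say between a child $v$ and its parent $u$. Being clockwise saturated, $e$ is oriented $v\to u$ with the outer face on its left. This is precisely the configuration ruled out by the consequence of Lemma~\ref{lem:deborde}, namely that the outer face cannot lie on the left of $(v,u)$; contradiction. Thus $e\in\CC M$.

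It then remains to analyse the other outer edges. Because $(\TT M,\CC M)$ partitions the edges of $M$, it suffices to prove that every outer closure edge is itself clockwise saturated: the remaining outer edges are then exactly the outer tree edges, for which the same consequence of Lemma~\ref{lem:deborde} forces the outer face to lie on the right when one walks from a child to its parent. So let $e\in\CC M$ be an outer edge. By Corollary~\ref{cor:open} it is saturated and the fundamental cycle $C_e$ it forms with $\TT M$ is clockwise, so the outer face lies on the side of $C_e$ to the left of its traversal, while the complementary side is the bounded disk enclosed by $C_e$. Since $e$ is an outer edge, the outer face is incident to $e$ and can therefore only lie on its left, whence $e$ is clockwise saturated.

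I expect the main obstacle to be this last step: passing from the global statement that $C_e$ is a clockwise cycle (outer face on the left of the whole cycle) to the local statement that the single edge $e$ has the outer face immediately on its left. I would make this precise by noting that a simple cycle on the sphere bounds two disks, exactly one of which contains the outer face; for a clockwise traversal the left side of every edge lies in that disk, and since $e$ is incident to the outer face, its left side can only be the outer face itself. A secondary point deserving care is the fractional bookkeeping in the existence step, where \emph{not} clockwise saturated must be read as \emph{at least one copy oriented counterclockwise}, so that the outer boundary genuinely qualifies as a counterclockwise circuit in the sense of $(d-2)$-fractional orientations.
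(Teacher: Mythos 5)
Your proof is correct and follows essentially the same route as the paper, which presents this corollary as an immediate consequence of Lemma~\ref{lem:deborde} (for the tree/closure dichotomy) combined with the minimality of the orientation (for existence, exactly as in the first step of the proof of Theorem~\ref{thm:open}). The additional details you supply --- the fundamental-cycle argument showing that every outer closure edge is clockwise saturated, and the fractional reading of \emph{not clockwise saturated} --- merely make explicit what the paper leaves implicit.
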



\begin{proof}[Proof of Proposition~\ref{lem:path}]
  This enables to choose an outer closure edge $e$ to initialize
  Algorithm~\ref{algo:lineage}. Let $e_1, \dots e_\ell$ be the edges
  of the path returned by Algorithm~\ref{algo:lineage}, and let $v_0,
  \dots v_\ell$ be the corresponding vertices. Let us now prove by
  induction on the number of iterations that
  Algorithm~\ref{algo:lineage} correctly separates tree from closure
  edges. Assume that it is true after the $k$ first steps of the
  Algorithm~\ref{algo:lineage}, and let $i_k$ be the corresponding
  value of $i$. Assume that $i_{k}<l$, then $e = (v_{i_k}, w)$ for
  some vertex $w$ and $v_0$ is an outer vertex of $M$ at clockwise
  distance $i_k<d/2$ from $v_{i_k}$ around $M\backslash C$. By
  Lemma~\ref{lem:deborde}, it prevents $w$ from being a child of
  $v_{i_k}$. Hence if $e$ is saturated towards $v_i$, it implies that
  $e$ is a closure edge; otherwise $w$ is the parent of
  $v_{i_k}$. Moreover, from the construction, $w$ is necessarily an
  outer vertex of $M\backslash C$.  This ends the proof.
\end{proof}

\noindent
Let us end this section with two comments. 
\begin{rem} 
  The depth-first search process of Algorithm~\ref{algo:subtree} could
  have been applied on $e_1$ rather than $e_\ell$. But relying on this
  whole lineage path, we can identify a much bigger subtree.  This is
  actually a key point for proving the termination of recursive calls
  in Algorithm 3, as explained in Section~\ref{sec:smalltrees}.
\end{rem}

\smallbreak

\begin{rem}\label{rem:fin}
  If the lineage path reaches the root face, the depth-first search
  algorithm can be continued so as to identify \BB{M} entirely, as in
  Fig.\ref{subfig:dfs2}. Indeed in this case, let us open the
  closure edges pointing to the lineage path, collapse the root
  polygon into a single root vertex, and choose as root-corner of this
  new map the corner of the root vertex incident to $e_{\ell}$ and to
  the outer face. Then we are precisely in the setting of
  Proposition~\ref{prop:bernardi}.
\end{rem}

\subsection{Triangulations and quadrangulations}\label{sub:trigquad}
Simple triangulations and simple quadrangulations constitute a much
simpler case than general $d$-angulations of girth $d$, since all the edges are
saturated in their minimal \ddori. 

Given a $d$-angulation $M$ of girth $d$ with $d=3$ or $d=4$,
Algorithm~\ref{algo:lineage} identifies two tree-edges $e_1$ and
$e_2$, and Algorithm~\ref{algo:subtree} computes \MM{e_2}. \MM{e_2}
can be decomposed into \BB[e_2]{M} and \Mr{e_2}. Since all the edges
are saturated, the closure of \Mr{e_2} is itself a $d$-angulation,
smaller than $M$, on which Algorithm~\ref{algo:subtree} can
be iteratively applied.
  
At each iteration the number of edges of the map decreases, the sequence of
maps reaching eventually the trivial map reduced to a single cycle. Following
the sequence backwards provides a recursive construction of the tree-and-closure
partition of the desired map. Hence:

\begin{prop}
   For any simple triangulation or simple quadrangulation $M$,
   Algorithm~\ref{algo:triquad} computes \BB{M} in linear time.
 \end{prop}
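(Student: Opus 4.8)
The plan is to present Algorithm~\ref{algo:triquad} as a streamlined instance of the generic opening scheme, so that correctness and termination come almost for free and only the complexity bound requires genuine work. First I would argue correctness by the recursive reduction already sketched above. By Theorem~\ref{thm:pseudo} every simple triangulation or quadrangulation $M$ has a unique blossoming forest $\BB M$, and Claim~\ref{claim:uni} reduces its computation to detaching a single subtree and recursing. Concretely, one call to Algorithm~\ref{algo:lineage} returns a lineage path of length $\lceil d/2\rceil=2$ by Proposition~\ref{lem:path}, and Algorithm~\ref{algo:subtree} then returns the subtree $\BB[e_2]{M}$ together with the almost-total closure $\MM{e_2}$ by Proposition~\ref{prop:bernardi++}. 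Grafting $\BB[e_2]{M}$ back into the marked corner of $\BB{\Mr{e_2}}$ reconstructs $\BB M$, again by Claim~\ref{claim:uni}.

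The specialization to $d\in\{3,4\}$ enters through the hypothesis that all edges are saturated in the minimal \ddori. I would use this to observe that $\Mr{e_2}$ carries no partially oriented edge, so its closure is once more a simple $d$-angulation of girth $d$ rather than a fractional forest with residual stems; the recursion thus stays inside $\Mdd$ and can be driven by iterating Algorithm~\ref{algo:subtree} directly, without any nontrivial replacement of the detached subtree. Since $\BB[e_2]{M}$ contains at least the genuine tree edge $e_2$, each recursive call acts on a strictly smaller map, so the sequence of maps decreases and terminates at the trivial cycle; following it backwards recovers the \TCP, and hence $\BB M$.

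The main obstacle is the linear-time bound, which I would prove by amortization rather than by bounding each iteration separately. The cost of one iteration splits into the clockwise walk performed by Algorithm~\ref{algo:lineage} and the contour traversal performed by Algorithm~\ref{algo:subtree}; the latter is linear in the size of $\BB[e_2]{M}$, and the key observation is that once this subtree is detached, its edges---the tree edges together with the internal closure edges opened while forming $\MM{e_2}$---are never inspected again, so the contour costs telescope to a total that is linear in the number of edges of $M$. The delicate point is to bound the clockwise walks: such a walk may traverse several closure edges before meeting a tree edge, and one must rule out a quadratic accumulation. I would resolve this by charging each closure edge traversed to the iteration in which it is opened and subsequently absorbed into the detached subtree, using the bounded lineage length $\lceil d/2\rceil=2$ and Lemma~\ref{lem:deborde} to guarantee that the walk cannot wander far before either opening such an edge or reaching a tree edge of the path. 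With each edge examined only a constant number of times, summing over all iterations yields the claimed linear complexity.
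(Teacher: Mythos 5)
Your correctness and termination arguments coincide with the paper's: saturation of every edge in the minimal \ddori ensures that the closure of \Mr{e_2} is again a simple $d$-angulation, the number of edges strictly decreases at each call, and Claim~\ref{claim:uni} lets you reassemble \BB{M} by grafting the detached subtrees back. Your accounting for Algorithm~\ref{algo:subtree} is also fine: the tree edges of \BB[e_2]{M} and the closure edges opened \emph{inside} it leave the map with the subtree, so those contour costs telescope to $O(n)$.

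The gap is in your treatment of the closure edges met by the clockwise walks of Algorithm~\ref{algo:lineage}. You charge each such edge ``to the iteration in which it is opened and subsequently absorbed into the detached subtree'', but only its \emph{head} half-edge (the one incident to a lineage vertex $v_i$) is absorbed; its tail survives as an opening stem of \Mr{e_2}, is re-closed when the smaller map is formed, and reappears as a closure edge of $M_{k+1}$ with a new head. Nothing prevents the lineage walk of a later iteration from meeting this reincarnated edge again, so your charge is not injective and the total walk cost is a priori $\sum_k c_k$, which --- as the paper itself points out at the start of the complexity analysis in Section~\ref{sec:opening} --- can only be bounded by $n$ per iteration, i.e.\ $O(n^2)$ overall. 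Note that Lemma~\ref{lem:deborde} and the bound $\ell\le\lceil d/2\rceil$ control the number of \emph{tree} edges on the walk, not the number of closure edges packed between consecutive tree edges, which can be as large as a vertex degree. The paper's resolution is a data structure rather than a charging scheme: consecutive closure edges sharing a head are stored in a single \emph{bundle}, bundles persist from $M_k$ to $M_{k+1}$ up to $O(1)$ merge and deletion operations, and each lineage walk meets at most $d$ bundles, so the walks cost $O(d)$ per iteration. Your proof needs either this device or a genuinely new argument that $\sum_k c_k=O(n)$; as written, the linear-time claim is not established.
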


\begin{algorithm}[h!]
\caption{BlossomingForest3Or4angulation(M)}\label{algo:triquad}
  \KwIn{a simple triangulation or quadrangulation $M$}
  \KwOut{its blossoming forest \BB M}
  \BlankLine
  let $T = \FuncSty{DetachableSubtree}(M)$\;
  delete $T$ in $M$ and perform the closure\;
  let $F = \FuncSty{BlossomingForest3Or4angulation}(M)$ \; 
  graft $T$ in $F$ in the appropriate corner\;
  \Return{F}\;
\end{algorithm}

\subsection{How to graft small subtrees}\label{sec:smalltrees}
This section is dedicated to the proof of the termination of
Algorithm~\ref{algo:complet} in the general case of $d$-angulations of
girth $d$ for $d\geq 5$.  Roughly speaking, we want to iterate
Algorithm~\ref{algo:subtree} on the closure of \Mr{e_\ell}, but this
map is not necessarily a $d$-angulation: this can happen if the edge
$e_\ell$ is not saturated, so that its deletion creates a vertex $u$
of outdegree less than $d$. To circumvent this problem,
Algorithm~\ref{algo:complet} grafts at $u$ a smaller
$d$-fractional subtree of appropriate excess, performs the closure,
and recursively calls itself on the resulting (smaller) map
in \Mdd. The existence of such a smaller subtree is
guaranteed by Proposition~\ref{prop:order}, which we shall now prove.

\medskip

A natural partial order on blossoming trees is the order induced by
sizes: a tree $t_1$ is declared to be smaller than a tree $t_2$ if
$t_1$ has less edges than $t_2$. But this order has to be
slightly refined so as to define unambiguously a unique minimal
element $\td i$ in each $\Tdn i$.  Precisely:

\begin{itemize}
\item we adopt the convention that the empty tree has excess $0$,
  hence it is equal to $\td 0$;\smallskip
\item for $d$ even or $i$ odd, there exists also a unique minimal
  element $\td i$ in $\Tdn i$, the one made of a path of length
  $(d-2-i)/2$ and one stem;\smallskip
\item for $d$ odd and $i$ even, exactly two trees have minimal size,
  both made of a path of length $d-2-i/2$ with two stems; consider
  their contour words on $\{b,e, \bar e\}$: they are respectively
  equal to $w_1 = e^p \; b \; e^q \; b \; \bar e^{(p+q)}$ and $w_2 =
    e^{p+q} \; b \; \bar e^q \; b \; \bar e^p$, with $p =
      \frac12(d-i-1)$ and $q=\frac12(d-3)$. We refine the ordering on
      $\Tdn i$ by defining the minimal element $\td i$ as the one with
      contour word~$w_1$.
\end{itemize}
\smallskip

This notion of ordering extends naturally to $d$-fractional forests:
$F_1<F_2$ if and only if $F_1$ can be obtained from $F_2$ by replacing
a sequence of subtrees by smaller ones. The maps in $\Mdd$ inherit the
ordering of their blossoming forests. The unique minimal map of $\Mdd$
is the map reduced to a simple cycle of length $d$.

\begin{proof}[Proof of Proposition~\ref{prop:order}]
  The case $i = 0$ is clear, and includes in particular the case where
  $v_\ell$ is a root vertex. From the exploration of the lineage path
  in Algorithm~\ref{algo:lineage}, any other tree that
  Algorithm~\ref{algo:subtree} may return has at least $d/2$ edges
  before the first stem in its contour word.  Since $\td i$ has less
  than $d/2$ edges for $d$ even or $i$ odd, this proves the result in
  those cases.  For $d$ odd and $i$ even, our particular choice for
  $\td i$ enables also to conclude.
\end{proof}

\subsection{Proof of Theorem~\ref{thm:linear} -- Complexity of the algorithm}

Propositions~\ref{lem:path} and~\ref{prop:bernardi++} prove that the
output of the algorithm is the \TCP of the input. It remains to prove
that the algorithm can be implemented in linear time.
\smallbreak

Let $(M_{k})$ be the sequence of maps obtained in the execution of the
algorithm. Observe that the length of the sequence $(M_k)$ is bounded
by $n/2 + 2n/(d-1)$; indeed, for each $k$, $M_{k+1}$ has at least two
edges less than $M_{k}$, except in the case where $\td i$ is grafted
instead of the other tree in $\Tdn i$ of minimal size; the latter case
happens at most $2n/(d-1)$ times, since any edge appears at most once
in such a subtree.
\smallbreak

The cost of step $k$ of the algorithm is clearly linear respectively
to the number of edges of $(M_k)$, since only exploration processes
are performed, with a bounded cost per visited edge.  Unfortunately,
this is not precise enough to evaluate correctly the total complexity
of the algorithm, and we need to look more carefully at the number of
visited edges of each type at each step.
The cost of step $k$ can be decomposed into three parts:

\begin{itemize}
\item the contribution $c(k)$ of the handling of closure edges
  discovered during the exploration of the lineage path, described in
  Algorithm~1;\smallskip
\item the contribution $t(k)$ of the exploration of the subtree
  $T_k$, including the computation of the lineage path, obtained by
  Algorithm~2;\smallskip
\item the contribution $n(k)$ of the construction of $M_{k+1}$,
  including the deletion of $ \BB[e_\ell]{M_k}$ and the grafting of
  the appropriate $\td i$, described in Section~\ref{sec:smalltrees}.
\end{itemize}

The number of edges in $T_k$ is the sum of the size of the adequate
$\td i$ -- which is bounded by $d$ -- and of the size difference
between $M_{k+1}$ and $M_k$. Hence, the sum over $k$ of $t(k)$ is
linear in $n$. Likewise the sum over $k$ of $n(k)$ is linear in $n$.
\smallbreak

To deal with $c(k)$, we need to be slightly more careful. Indeed, the
number of closure edges met during the exploration of the lineage path
at any given step $k$ can only be bounded by $n$, resulting in an upper
bound of order $n^2$ for the total complexity.  However, let us
observe that a bundle of closure edges that end at a same vertex of
the lineage path in $M_k$ also end at a same vertex in $M_{k+1}$
(provided that they still belong to $M_{k+1}$). Therefore, we assume
that $M$ is encoded in such a way that closure edges ending at the
same vertex are stored in the same object, called a \emph{bundle}, to
be used during the further exploration processes. In particular, the
number of bundles encountered during Algorithm~1 is bounded by $d$.

\begin{figure}[t]
  \centering
  \subfigure[\label{fig:algo2path}Identification of a lineage path by Algorithm~\ref{algo:lineage}.]{\includegraphics[page=2, scale=0.55]{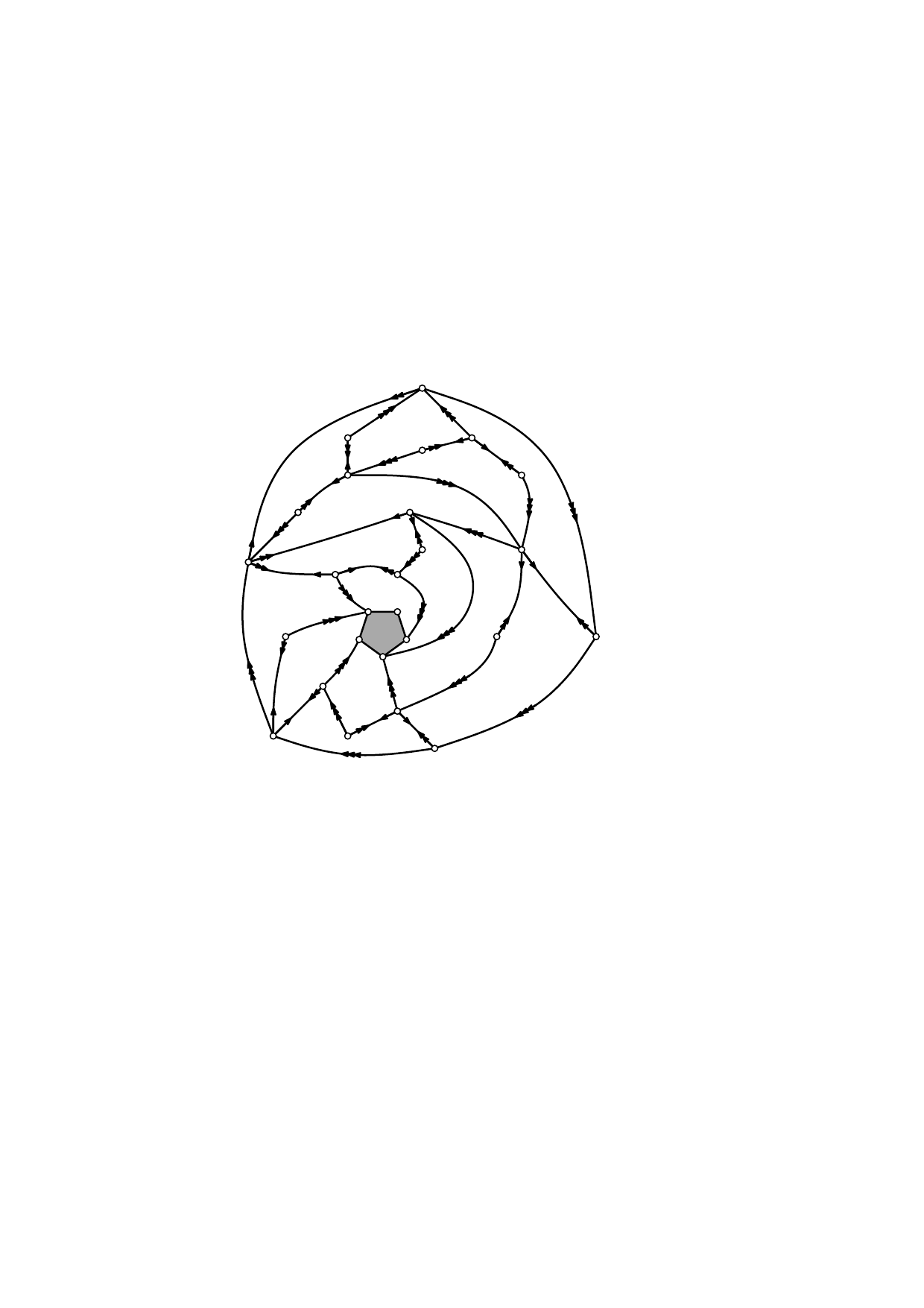}}\qquad
  \subfigure[\label{fig:algo2tree} Identification of the subtree $\mathcal B _{M}^{(e)}$.]{\includegraphics[page=3, scale =0.55]{algo3}}\qquad
  \subfigure[\label{fig:algo2remove}The subtree $\mathcal B_{M}^{(e)}$ is replaced by $t_{2}^{(5)}$.]{\;\includegraphics[page=4, scale =0.55]{algo3}}
  \caption{Merging of a bundle (made of the two
    red dash-dotted edges in Fig.\subref{fig:algo2path}) \\with
    one of the closure edges of $t_{2}^{(5)}$ in
    Fig.\subref{fig:algo2remove}.}
  \label{fig:algo2}
\end{figure}

More precisely, in addition to the classical encoding of a map as a
collection of half-edges cyclically ordered around vertices and faces,
we introduce \emph{bundles} of half-edges. Each bundle contains a
doubly-linked list of consecutive closure half-edges ending at the
same vertex.  Each half-edge belongs to at most one bundle. At the
beginning of the algorithm, there is no bundle. Then, each time an
half-edge is identified as the extremity of a closure edge, it is
either added to an existing bundle, or used to initialize a new
one. If two bundles happen to become consecutive, they are
merged. This can occur during Algorithm~\ref{algo:lineage} or after
the grafting of the appropriate $\td i$ in
Algorithm~\ref{algo:complet}, see Fig.\ref{fig:algo2}.  The only other
operation to be performed on a bundle is the deletion of its
half-edges that belong to the subtree \BB[e_\ell]{M_k} identified by
Algorithm~2.

\smallskip
The total number of closure edges that need to be dealt with during
Algorithm~\ref{algo:complet} is linear in the number $n$ of vertices:
the number of closure edges of the initial $d$-angulation is linear in
$n$, and at each step, at most two new closure edges are created. The
merging of two bundles and the deletion of consecutive half-edges from
one bundle can be both implemented in constant time. Hence the sum
over $k$ of $c(k)$ is linear in~$n$.

\bibliographystyle{plain}

\begin{thebibliography}{10}
\bibitem{AdAl13}
L.~Addario-Berry and M.~Albenque.
\newblock The {B}rownian map is the scaling limit of simple triangulations and
  quadrangulations.
\newblock arXiv:1306.5227, 2013.

\bibitem{Baxter01}
R.J. Baxter.
\newblock Dichromatic polynomials and potts models summed over rooted maps.
\newblock {\em Ann. Comb.}, 5(1):17--36, 2001.

\bibitem{Bernardi07}
O.~Bernardi.
\newblock Bijective counting of tree-rooted maps and shuffles of parenthesis
  systems.
\newblock {\em Electron. J. Combin.}, 14(1):9, 2007.

\bibitem{BerFusPentagulation}
O.~Bernardi and {\'E}.~Fusy.
\newblock A bijection for triangulations, quadrangulations, pentagulations,
  etc.
\newblock {\em J. Combin. Theory Ser. A}, 119(1):218--244, 2012.

\bibitem{BerFus12}
O.~Bernardi and {\'E}.~Fusy.
\newblock Unified bijections for maps with prescribed degrees and girth.
\newblock {\em J. Combin. Theory Ser. A}, 119(6):1351--1387, 2012.

\bibitem{BoBoFu10}
N.~Bonichon, M.~Bousquet-M{\'e}lou, and {\'E}.~Fusy.
\newblock Baxter permutations and plane bipolar orientations.
\newblock {\em S\'em. Lothar. Combin.}, 61A:B61Ah, 2010.

\bibitem{BouJeh06}
M.~Bousquet-M{\'e}lou and A.~Jehanne.
\newblock Planar maps and algebraic series: a generalization of the quadratic
  method.
\newblock {\em J. Combin. Theory Ser. B}, 96:623--672, 2006.

\bibitem{MBMSch00}
M.~Bousquet-M{\'e}lou and G.~Schaeffer.
\newblock Enumeration of planar constellations.
\newblock {\em Adv. in Appl. Math.}, 24(4):337--368, Nov 2000.

\bibitem{BDG02}
J.~Bouttier, Ph. di~Francesco, and E.~Guitter.
\newblock Census of planar maps: from the one-\-matrix model solution to a
  combinatorial proof.
\newblock {\em Nuclear Phys.~B}, 645(3):477--499, 2002.

\bibitem{BDG04}
J.~Bouttier, Ph. di~Francesco, and E.~Guitter.
\newblock Planar maps as labeled mobiles.
\newblock {\em Electron. J. Combin.}, 11(1):R69, 2004.

\bibitem{BDG07}
J.~Bouttier, Ph. Di~Francesco, and E.~Guitter.
\newblock {Blocked edges on Eulerian maps and mobiles: Application to spanning
  trees, hard particles and the Ising model}.
\newblock {\em J.~Phys.~A}, 40:7411--7440, 2007.

\bibitem{BIPZ78}
E.~Br{\'e}zin, C.~Itzykson, G.~Parisi, and J.-B. Zuber.
\newblock Planar diagrams.
\newblock {\em Comm. Math. Phys.}, 59(1):35--51, 1978.

\bibitem{ChaMarSch09}
G.~Chapuy, M.~Marcus, and G.~Schaeffer.
\newblock A bijection for rooted maps on orientable surfaces.
\newblock {\em SIAM J. Discrete Math.}, 23(3):1587--1611, 2009.

\bibitem{ChaSch04}
Ph. Chassaing and G.~Schaeffer.
\newblock Random planar lattices and integrated superbrownian excursion.
\newblock {\em Probab. Theory Related Fields}, 128:161--212, 2004.

\bibitem{CorVau81}
R.~Cori and B.~Vauquelin.
\newblock Planar maps are well labeled trees.
\newblock {\em Canad. J. Math}, 33(5):1023--1042, 1981.

\bibitem{FraPOMRos95}
H.~de~Fraysseix, P.~Ossona~de Mendez, and P.~Rosenstiehl.
\newblock Bipolar orientations revisited.
\newblock {\em Discrete Appl. Math.}, 56(2-3):157--179, 1995.

\bibitem{DucPouSch13}
E.~Duchi, D.~Poulalhon, and G.~Schaeffer.
\newblock Uniform random sampling of simple branched coverings of the sphere by
  itself.
\newblock In {\em Proc. 25th ACM-SIAM Symp. (SODA)}, pages 294--304, 2014.

\bibitem{Felsner04}
S.~Felsner.
\newblock Lattice structures from planar graphs.
\newblock {\em Electron. J. Combin.}, 11(1):15, 2004.

\bibitem{FeFuNoOr11}
S.~Felsner, {\'E}.~Fusy, M.~Noy, and D.~Orden.
\newblock Bijections for baxter families and related objects.
\newblock {\em J. Combin. Theory, Ser. A}, 118(3):993--1020, 2011.

\bibitem{FusyThesis}
{\'E}.~Fusy.
\newblock {\em Combinatoire des cartes planaires et applications
  algorithmiques}.
\newblock PhD thesis, {\'E}cole Polytechnique, 2007.

\bibitem{FuPoSc09}
{\'E}.~Fusy, D.~Poulalhon, and G.~Schaeffer.
\newblock Bijective counting of plane bipolar orientations and {S}chnyder
  woods.
\newblock {\em European J. Combin.}, 30(7):1646--1658, 2009.

\bibitem{GesVie89}
I.M. Gessel and X.~Viennot.
\newblock Determinants, paths, and plane partitions.
\newblock 1989.

\bibitem{GouJac}
I.P. Goulden and D.M. Jackson.
\newblock {\em Combinatorial enumeration}, volume~19.
\newblock Wiley New York, 1983.

\bibitem{HaPrTu}
F.~Harary, G.~Prins, and W.T. Tutte.
\newblock The number of plane trees.
\newblock {\em Indag. Math}, 26:319--329, 1964.

\bibitem{Jackson87}
D.M. Jackson.
\newblock Counting cycles in permutations by group characters, with an
  application to a topological problem.
\newblock {\em Trans. Amer. Math. Soc.}, 299(2):785--801, 1987.

\bibitem{LeGall07}
J.-F. Le~Gall.
\newblock The topological structure of scaling limits of large planar maps.
\newblock {\em Invent. Math.}, 169(3):621--670, 2007.

\bibitem{LeGall13}
J.-F. Le~Gall.
\newblock Uniqueness and universality of the {B}rownian map.
\newblock {\em Ann. Probab.}, 41:2880--2960, 2013.

\bibitem{MarMok06}
J.-F. Marckert and A.~Mokkadem.
\newblock Limit of normalized quadrangulations: the {B}rownian map.
\newblock {\em Ann. Probab.}, 34(6):2144--2202, 2006.

\bibitem{Miermont09}
G.~Miermont.
\newblock Tessellations of random maps of arbitrary genus.
\newblock {\em Ann. Sci. {\'E}c. Norm. Sup{\'e}r.}, 42(5):725--781, 2009.

\bibitem{Miermont13}
G.~Miermont.
\newblock The {B}rownian map is the scaling limit of uniform random plane
  quadrangulations.
\newblock {\em Acta Math.}, 210:319 -- 401, 2013.

\bibitem{OssonaThesis}
P.~Ossona~de Mendez.
\newblock {\em Orientations bipolaires}.
\newblock PhD thesis, {\'E}cole des Hautes Etudes en Sciences Sociales, Paris,
  1994.

\bibitem{PouSch03}
D.~Poulalhon and G.~Schaeffer.
\newblock A bijection for triangulations of a polygon with interior points and
  multiple edges.
\newblock {\em Theoret. Comput. Sci.}, 307(2):385--401, 2003.

\bibitem{PouSch06}
D.~Poulalhon and G.~Schaeffer.
\newblock Optimal coding and sampling of triangulations.
\newblock {\em Algorithmica}, 46(3):505--527, 2006.

\bibitem{Schaeffer97}
G.~Schaeffer.
\newblock Bijective census and random generation of eulerian planar maps with
  prescribed vertex degrees.
\newblock {\em Electron. J. Combin.}, 4(1):20, 1997.

\bibitem{Sch98}
G.~Schaeffer.
\newblock {\em Conjugaison d'arbres et cartes combinatoires al{\'e}atoires}.
\newblock PhD thesis, Universit{\'e} Bordeaux~I, 1998.

\bibitem{Schnyder}
W.~Schnyder.
\newblock Planar graphs and poset dimension.
\newblock {\em Order}, 5:323--343, 1989.

\bibitem{Hooft74}
G.~t'Hooft.
\newblock A planar diagram theory for strong interactions.
\newblock {\em Nuclear Phys.~B}, 72(3):461--473, 1974.

\bibitem{Tut63}
W.T. Tutte.
\newblock A census of planar maps.
\newblock {\em Canad. J. Math.}, 15:249--271, 1963.



\end{thebibliography}

\end{document}